\newtheorem{theorem}{\bf Theorem}[section]
\newtheorem{proposition}[theorem]{\bf Proposition}
\newtheorem{definition}[theorem]{\bf Definition}
\newtheorem{remark}[theorem]{\bf Remark}
\newtheorem{lemma}[theorem]{\bf Lemma}
\newsavebox{\savepar}
\begin{document}
		\title{A parabolic problem involving $p(x)$-Laplacian, a power and a singular nonlinearity}
			\author{ Akasmika Panda\footnote{akasmika444@gmail.com}$^{~,1}$, Debajyoti Choudhuri\footnote{dc.iit12@gmail.com}$^{~,1}$ \&  Kamel Saoudi\footnote{kmsaoudi@iau.edu.sa (Corresponding author)},$^{\,,2}$\\
				\small{$^1$\it Department of Mathematics, National Institute of Technology Rourkela, India}\\
				\small{$^2$\it Basic and Applied Scientifc Research Center, Imam Abdulrahman Bin Faisal University,,}\\ \small{\it P.O. Box 1982, 31441, Dammam, Saudi Arabia}}
	\date{}
	\maketitle
	\begin{abstract}
	\noindent The purpose of this paper is to study nonlinear singular parabolic equations with $p(x)$- Laplacian. Precisely, we consider the following problem and discuss the existence of a non-negative weak solution.
	\begin{align*}
	\frac{\partial u}{\partial t}-\Delta_{p(x)}u&=\lambda u^{q(x)-1} + u^{-\delta(x)}g+ f&&\text{in}~Q_T,\\
	u&= 0&&\text{on}~\Sigma_T,\\
	u(0,\cdot)&=u_0(\cdot)&&\text{in}~\Omega\nonumber.
	\end{align*}
	Here $Q_T=\Omega\times(0,T)$, $\Sigma_T=\partial\Omega\times(0,T)$, $\Omega$ is a bounded domain in $\mathbb{R}^N$ ($N\geq 2$) with Lipschitz continuous boundary $\partial\Omega$, $\lambda\in(0,\infty)$, $f\in L^1(Q_T)$, $g\in L^\infty(\Omega)$, $u_0\in L^r(\Omega)$ with $r\geq 2$,  $\delta:\overline{\Omega}\rightarrow(0,\infty)$ is continuous, and $p,q\in C(\overline{\Omega})$ with $\underset{x\in\overline{\Omega}}{\max}~p(x)<N$,  $q(\cdot)<p^*(\cdot)$.\\
	The article is distinguished into two cases according to the choice of $f$ with different range of
	parameters $p(\cdot)$, $q(\cdot)$.\\
		{\bf Keywords}: $p(x)$-Laplace operator, Singular equation, Parabolic equation.\\
	{\bf AMS classification}:~35K10, 35K20, 35K65, 35K67, 35R06.
	\end{abstract}
	\section{Introduction}
	Throughout the article, we will consider a bounded domain $\Omega$ in $\mathbb{R}^N$ ($N\geq 2$) with a Lipschitz continuous boundary $\partial\Omega$. We denote $Q_T=\Omega\times(0,T)$, $\Sigma_T=\partial\Omega\times(0,T)$ for final time $T>0$, and $$C_+(\overline{\Omega})=\{f\in C(\overline{\Omega}): 1<f^-\leq f(x)\leq f^+<\infty,~\forall x\in \overline{\Omega}\}$$ where $f^+=\underset{\overline{\Omega}}{\sup}\{f(x)\}$ and $f^-=\underset{\overline{\Omega}}{\inf}\{f(x)\}$. Further, consider $\delta:\overline{\Omega}\rightarrow(0,\infty)$ to be a continuous function, and $p,q\in C_+(\overline{\Omega})$. \\
	In this article, we study singular problems according to the choice of $f$ and $\lambda$. These type problems are related to different models such as turbulent flow of gas in porous media \cite{Regan}, chemical heterogeneous catalyst kinetics \cite{Bank}, thermo-conductivity \cite{Fulks}, electromagnetic field \cite{Ghergu}, boundary layer phenomena for viscous fluids, signal transmission, non-Newtonian flows, etc \cite{Gatica,Nachman}.
\subsection{The first problem}
	The first part of the article is devoted to the study of the following singular parabolic problem with an $L^1$ datum given by
		\begin{align}\label{main para}
		\frac{\partial u}{\partial t}-\Delta_{p(x)}u&=\lambda u^{q(x)-1} + u^{-\delta(x)}g+ f&&\text{in}~Q_T,\nonumber\\
			u&> 0&&\text{in}~Q_T,\nonumber\\
		u&= 0&&\text{on}~\Sigma_T,\\
		u(0,\cdot)&=u_0(\cdot)&&\text{in}~\Omega,\nonumber
		\end{align}
where $\lambda>0$, the functions $u_0\in L^r(\Omega)$ with $r\geq 2$, $f\in L^1(Q_T)$, $g\in L^\infty(\Omega)$ are positive, and $p,q$ satisfy the following hypotheses:
\begin{itemize}
	\item [(A1)] $2-\frac{1}{N+1}<p^-\leq p^+<N$
	\item[(A2)] $q^+<p^-+\frac{1}{N+1}$
\end{itemize}
To the best of our knowledge, parabolic problems of type $\eqref{main para}$ which involves a power nonlinearity, singularity and an $L^1$ datum with the $p(x)$-Laplace operator are novel and new in the literature.\\
The literature concerning the elliptic counterpart of the problem we here consider is huge. Therefore, we will only refer the readers to the works which have some connection to the current one. The weak theory for purely singular problems has been developed over many years starting from the works by \cite{Boccardo elliptic,Bougherara elliptic,Orsina} with linear operators, and by \cite{Canino,Oliva elliptic} with nonlinear operators. Elliptic problems involving a singular nonlinearity, a measure data or an $L^1$ data have been studied in \cite{Cave,Oliva elliptic,Panda} and the references therein. Since problems of type $\eqref{main para}$ are new in the literature, we find a very less number of articles dealing with its stationary case. Further, we cite \cite{Ghosh} where the authors have settled the multiplicity result for the stationary problem of $\eqref{main para}$ with $p(\cdot)=p$ (a constant). \\
Let us now discuss some of the important parabolic problems which has helped us in the development of this work. Concerning the case $\lambda=0$ and $p(\cdot)=p$ of $\eqref{main para}$, the existence result has been investigated in \cite{Bonis,Bonis 1,Bonis 2,Oliva para} and the bibliography therein. de Bonis \& Giachetti in \cite{Bonis}, assumed the functions $f$ and $g$ to be in some $L^r\left(0,T;L^m(\Omega)\right)$ space with $\frac{1}{r}+\frac{N}{pm}<1$ in order to have bounded solutions. In the same spirit of \cite{Bonis}, Oliva and Petitta in \cite{Oliva para} have shown the existence of a weak solution considering $g$ to be in $L^1(Q_T)$ and by replacing the function $f$ with a bounded Radon measure $\mu$. Parabolic problems as $\eqref{main para}$ with $\lambda=0$, $g=0$, and measure or $L^1$ data have been analyzed by several authors since the papers \cite{Bendahmane,Sabri,Zhang}. The case $p(x)=$ const. in $\eqref{main para}$ is studied by Petitta et al. in \cite{Petitta,Petitta 1}, and Boccardo et al. in  \cite{Boccardo 1989,Boccardo 1997}, always assuming $\lambda=0$, $g=0$ and for measure or $L^1$ data. Most of them worked with renormalized solutions and entropy solutions. It is worth mentioning the result in \cite{Jamea} where nonlinear parabolic problems with variable exponent are considered with Neumann-type boundary conditions.
\subsection{The second problem}
In the second part, the singular parabolic problem we are going to study is the following:
	\begin{align}\label{main para 2}
\frac{\partial u}{\partial t}-\Delta_{p(x)}u&=\lambda u^{q(x)-1} + u^{-\delta(x)}{g}&&\text{in}~Q_T,\nonumber\\
u&> 0&&\text{in}~Q_T,\nonumber\\
u&= 0&&\text{on}~\Sigma_T,\\
u(0,\cdot)&=u_0(\cdot)&&\text{in}~\Omega,\nonumber
\end{align}
where $0<u_0\in L^r(\Omega)$, $\lambda>0$, and $p,q,r$ obey the following restrictions:
\begin{itemize}
	\item[(B1)] $2\leq p^-\leq p^+<N$
	\item[(B2)] $p(x)\leq q(x)<p^*(x)=\frac{Np(x)}{N-p(x)}$ for all $x\in \overline{\Omega}$
	\item[(B3)] $q^+<p^-\left(1+\frac{r}{N}\right)$
	\item[(B4)] $r>\max\{q^+,\delta^++1\}$
\end{itemize}
In the elliptic setting, the literature for the problem as in $\eqref{main para 2}$ with $p(x)$-Laplacian or $p$-Laplacian can be found in \cite{Giacomoni,Haitao,Hirano,Saoudi} and the bibliography therein. More precisely, these seminal papers deal with the existence and multiplicity of the problem both in the subcritical and critical case.\\
For the parabolic case, we refer to the works by Badra et al. in \cite{Badra 1,Badra 2}, Bougherara et al. in \cite{Bougherara,Bougherara 1} and the references therein for model problems as in $\eqref{main para 2}$. More precisely, the paper considered by M. Badra, K. Bal and J. Giacomoni in \cite{Badra 1} is as follows: 
\begin{align}\label{badra}
\frac{\partial u}{\partial t}-\Delta_{p}u&= f(x,u,\nabla u) + u^{-\delta}&&\text{in}~Q_T,\nonumber\\
u&> 0&&\text{in}~Q_T,\nonumber\\
u&= 0&&\text{on}~\Sigma_T,\\
u(0,\cdot)&=u_0(\cdot)&&\text{in}~\Omega,\nonumber
\end{align}
where $p\in(1,\infty)$, $\delta<2+\frac{1}{p-1}$, $u_0\in W_0^{1,p}(\Omega)\cap L^\infty(\Omega)$ satisfies a cone condition, and $f(x,s,\xi)=f(x,s)$ is a Caratheodory function which is bounded from below, locally Lipschitz in the second variable and it satisfies the following subhomogeneous growth condition:
$$0\leq \lim\limits_{s\rightarrow \infty}\frac{f(x,s)}{s^{p-1}}=\alpha_f<\lambda_1(\Omega).$$
Here, $\lambda_1(\Omega)$ is the first eigenvalue of $-\Delta_p$ in $\Omega$ with zero Dirichlet boundary condition. The authors have proved the existence and uniqueness of a weak solution to $\eqref{badra}$. Later, in 2015, Bougherara \& Giacomoni \cite{Bougherara 1} generalized the result in \cite{Badra 1} for any $p>1$, $\delta>0$, and $u_0\in (C_0(\Omega))^+$. Furthermore, Bougherara et al. \cite{Bougherara} studied the problem $\eqref{badra}$ with $\delta>0$, $u_0\in L^r(\Omega)$ and  $$f(x,s,\xi)\leq as^{q-1}+b+c|\xi|^{p-\frac{p}{q}},~\forall x\in\Omega,s\in\mathbb{R}^+,|\xi|\geq M,$$ 
where $a,c,M>0$, $b\geq 0$ , $r\geq q$, $2\leq p\leq q<\max\left\{p^*,p\left(1+\frac{r}{N}\right)\right\}$. Motivated by the former result in \cite{Bougherara}, we  study $\eqref{main para 2}$ for the variable exponent case. 
	\subsection{Plan of the paper}
In the present work, our objective is to construct some auxiliary problems where we replace the singular term by $1/(u+1/n)^{\delta(\cdot)}$ for every $n\in\mathbb{N}$ and find some required a priori estimates. Then, with the help of these estimates, we pass the limit $n\rightarrow\infty$ in the auxiliary problems to obtain a weak solution to $\eqref{main para 2}$ and $\eqref{main para}$.\\
We now describe the plan of the paper. In Section $\ref{section 1}$, we provide the mathematical preliminaries that will be helpful throughout the paper. In addition to that, we define the notion of weak solutions to problems $\eqref{main para}$, $\eqref{main para 2}$, and also state our main results. In Section $\ref{section 2}$, we discuss a general singular parabolic problem with an $L^1$ datum. Section $\ref{section 4}$ is all about proving the existence result for $\eqref{main para}$ with a restriction on the class of $L^1$ functions $f$. Section $\ref{section 5}$ is followed by two subsections to prove the existence result for $\eqref{main para 2}$. In Subsection $\ref{section 5.2}$, we find some a priori estimates on the sequence of the solutions of the approximating problems and in Subsection $\ref{section 5.1}$ we prove Theorem $\ref{main result 2}$. In the Appendix, we present an alternate approximation scheme with the help of a semi-discretization approach in time. 
	\section{Mathematical preliminaries and main results}\label{section 1}
	Consider the domain $\Omega\subset\mathbb{R}^N$ to be bounded, and the function $p\in C_+(\overline{\Omega})$. Then, the Lebesgue space with variable exponent $p(\cdot)$ is defined by
	$$L^{p(\cdot)}(\Omega)=\left\{u:\Omega\rightarrow\mathbb{R}:\int_{\Omega}|u|^{p(x)}<\infty\right\}.$$
	The space $L^{p(\cdot)}(\Omega)$ is a reflexive Banach space endowed with the following norm:
	$$\|u\|_{L^{p(\cdot)}(\Omega)}=\inf\left\{\mu>0:\int_{\Omega}\left|\frac{u(x)}{\mu}\right|^{p(x)}dx<1\right\}.$$
Let us define the modular function as $\rho(u)=\int_{\Omega}|u(x)|^{p(x)}dx$. Then, the relations between the modular function and the norm $\|\cdot\|_{L^{p(\cdot)}(\Omega)}$ are as follows:
\begin{itemize}
	\item $\|u\|_{L^{p(\cdot)}(\Omega)}=\mu\iff \rho(\frac{u}{\mu})=1.$
	\item $\|u\|_{L^{p(\cdot)}(\Omega)}>1\implies\|u\|_{L^{p(\cdot)}(\Omega)}^{p^-}\leq\rho(u)\leq\|u\|_{L^{p(\cdot)}(\Omega)}^{p^+}$. This inequality reverses if  $\|u\|_{L^{p(\cdot)}(\Omega)}<1$.
	\item $\underset{{n\rightarrow\infty}}{\lim}\|u_n-u\|_{L^{p(\cdot)}(\Omega)}=0\iff\underset{{n\rightarrow\infty}}{\lim}\rho(u_n-u)=0.$
\end{itemize}
The dual space of $L^{p(\cdot)}$ is denoted by $L^{p^\prime(\cdot)}$ where $p^\prime(\cdot)=\frac{p(\cdot)}{p(\cdot)-1}$. If $u_1\in L^{p(\cdot)}(\Omega)$ and $u_2\in L^{p^\prime(\cdot)}(\Omega)$, then we have the following H\"{o}lder type inequality:
$$\int_{\Omega}|u_1 u_2|dx\leq \left(\frac{1}{p^-}+\frac{1}{p^{\prime -}}\right) \|u_1\|_{L^{p(\cdot)}(\Omega)}\|u_2\|_{L^{p^\prime(\cdot)}(\Omega)}.$$
Let $q\in C(\overline{\Omega})$ with $q\geq p$, then $L^{q(\cdot)}(\Omega)$ is continuously embedded in $L^{p(\cdot)}(\Omega)$. \\	
The Sobolev space with variable exponent is given by
$$W^{1,p(\cdot)}(\Omega)=\left\{u\in L^{p(\cdot)}(\Omega): |\nabla u|\in L^{p(\cdot)}(\Omega) \right\}.$$
The space $W^{1,p(\cdot)}(\Omega)$ with $p^->1$ is a separable and reflexive Banach space equipped with the following norm:
$$\|u\|_{W^{1,p(\cdot)}(\Omega)}=\|u\|_{L^{p(\cdot)}(\Omega)}+\||\nabla u|\|_{L^{p(\cdot)}(\Omega)}.$$
We also define the subspace $W_0^{1,p(\cdot)}(\Omega)$ as closure of  $C_c^\infty(\Omega)$ in ${W^{1,p(\cdot)}(\Omega)}$. The dual of $W_0^{1,p(\cdot)}(\Omega)$ is denoted by $W_0^{-1,p^\prime(\cdot)}(\Omega)$.	For detailed study on these variable exponent spaces one can refer the work of Fan \& Zhao \cite{Fan}.\\
By the Poincar\'{e} inequality (\cite{Fan}), for every $u\in W_0^{1,p(\cdot)}(\Omega)$ we have
$$\|u\|_{L^{p(\cdot)}(\Omega)}\leq C\||\nabla u|\|_{L^{p(\cdot)}(\Omega)}$$
where $C=C(\Omega,p)>0$, and by the Sobolev embedding theorem, the following embedding:
 $$W^{1,p(\cdot)}(\Omega)\hookrightarrow L^{r(\cdot)}(\Omega)$$ is continuous for any $r\in C(\overline{\Omega})$ with $r(\cdot)\leq p^*(\cdot)=\frac{Np(\cdot)}{N-p(\cdot)}$. Moreover, this embedding is compact for any $r\in C(\overline{\Omega})$ with $\underset{x\in \overline{\Omega}}{\inf}(p^*(x)-r(x))>0$.\\
 Let us consider the extended function $p:\overline{Q}_T= \overline{\Omega}\times [0,T]\rightarrow[1,\infty)$ with $p(t,x)=p(x)$ for every $(t,x)\in \overline{Q}_T$. We now define a generalized Lebesgue space with variable exponent by
 $$L^{p(\cdot)}(Q_T)=\left\{u:Q_T\rightarrow\mathbb{R}:\int_{Q_T}|u|^{p(x)}dx dt<\infty\right\}.$$
The properties of $L^{p(\cdot)}(Q_T)$ are the same as $L^{p(\cdot)}(\Omega)$ when endowed with the norm $\|\cdot\|_{L^{p(\cdot)}(Q_T)}$ given by
 $$\|u\|_{L^{p(\cdot)}(Q_T)}=\inf\left\{\mu>0:\int_{Q_T}\left|\frac{u(x,t)}{\mu}\right|^{p(x)}dx dt<1\right\}.$$
 If $X$ is a Banach space, then the space $L^r(0,T;X)$ with $r\geq 1$ denotes the standard Bochner space and is defined as follows:
 $$L^r(0,T; X)=\left\{u:(0,T)\rightarrow X: \|u(t)\|_{X}\in L^r(0,T)\right\}.$$
Furthermore, $C([0,T];X)$ identifies the space of continuous functions $u:[0,T]\rightarrow X$ such that $\|u\|_{C([0,T];X)}=\underset{t\in[0,T]}{\max}\|u(t)\|_{X}<\infty$. 
\begin{remark}
	We have the following embedding results (Refer \cite{Bendahmane}):
	\begin{enumerate}
		\item Let $p(\cdot)$ and $q(\cdot)$ be two continuous functions with $p(x)\leq q(x)$ for almost every $x\in \Omega$. Then, the embedding from $L^{q^-}(0,T;W_0^{1,q(\cdot)}(\Omega))$ to $L^{p^-}(0,T;W_0^{1,p(\cdot)}(\Omega))$ is continuous.
		\item The following inclusions are continuous and dense: $$L^{p^+}(0,T;L^{p(\cdot)}(\Omega))\overset{d}{\hookrightarrow} L^{p(\cdot)}(Q_T)\overset{d}{\hookrightarrow} L^{p^-}(0,T;L^{p(\cdot)}(\Omega)).$$
	\end{enumerate}
\end{remark}  
\noindent With the consideration of the above results and remarks, we introduce a natural function space with variable exponent as follows:
\begin{equation}\label{V}
V^{p(\cdot)}(Q_T)=\left\{u\in L^{p^-}(0,T,W_0^{1,p(\cdot)}(\Omega)): |\nabla u|\in L^{p(\cdot)}(Q_T) \right\}.
\end{equation}
The space $V^{p(\cdot)}(Q_T)$ is a separable and reflexive Banach space endowed with the norm
$$\|u\|_{V^{p(\cdot)}(Q_T)}=\|\nabla u\|_{L^{p(\cdot)}(Q_T)}.$$
According to Bendahmane et al. in \cite{Bendahmane}, we have the following continuous dense embeddings:
\begin{enumerate}
	\item $L^{p^+}(0,T;W_0^{1,p(\cdot)}(\Omega))\overset{d}{\hookrightarrow} V^{p(\cdot)}(Q_T)\overset{d}{\hookrightarrow} L^{p^-}(0,T;W_0^{1,p(\cdot)}(\Omega))$.
	\item $L^{(p^-)'}(0,T;(W^{1,p(\cdot)}(\Omega))^*)\overset{d}{\hookrightarrow} V^{p(\cdot)}(Q_T)^*\overset{d}{\hookrightarrow} L^{(p^+)'}(0,T;(W^{1,p(\cdot)}(\Omega))^*)$, where $V^{p(\cdot)}(Q_T)^*$ is the dual space of $V^{p(\cdot)}(Q_T)$.
\end{enumerate}
In this paper, we are also going to deal with local Sobolev spaces with variable exponent defined by
\begin{equation}\label{local}
V_{loc}^{p(\cdot)}(Q_T)=\left\{u:Q_T\rightarrow \mathbb{R}: u~\text{and}~|\nabla u|\in L^{p(\cdot)}(K\times(0,T))~\text{for every compact}~K\subset\Omega\right\}.
\end{equation}
Thus, we need to define a general sense of trace known as $M$-boundary trace which is given below.
\begin{definition}
	Let $\{\Omega_m\}$ be a sequence such that $\bar{\Omega}_m\subset\Omega_{m+1}\subset\Omega$. Then, $\{\Omega_m\}$ is said to be an exhaustion of $\Omega$ if $\Omega_m\uparrow\Omega$. If each $\Omega_m$ is of $C^2$ class, then this exhaustion is said to be of class $C^2$. Moreover, we say that an exhaustion $\{\Omega_m\}$ is a uniform $C^2$ exhaustion if $\Omega$ is $C^2$  and  the sequence $\{\Omega_m\}$ is uniformly of class $C^2$.
\end{definition}
\begin{definition}[M-boundary trace, \cite{Giri}]\label{trace}
	Let $u\in W_{loc}^{1,p(\cdot)}(\Omega)$ for $p^->1$. Then, $\nu\in\mathfrak{M} (\partial \Omega)$ is said to be the $M$-boundary trace of $u$ on $\partial\Omega$ if for every $C^2$ exhaustion $\{\Omega_m\}$ and for every $f\in C(\bar{\Omega})$ 
	$$\int_{\partial \Omega_m} u\lfloor_{\partial \Omega_m} f dS\rightarrow \int_{\partial \Omega} f d\nu.$$
	Here, $\mathfrak{M} (\partial \Omega)$ is the space of bounded Borel measures on $\partial\Omega$ with the total variation norm, $u\lfloor_{\partial \Omega_m}$ denotes the Sobolev trace, $dS= dH^{N-1}$ and $H^{N-1}$ denote the $(N-1)$ dimensional Hausdorff measure. The $M$-boundary trace $\nu$ of $u$ is denoted by $tr~u$.\\
	Furthermore, if $u\in W^{1,p(\cdot)}(\Omega)$, then the Sobolev trace of $u$ is the same as $M$-boundary trace of $u$ = $tr~u$.
\end{definition}
\begin{remark}
\begin{enumerate}
\item 
For a motivation behind taking this as the definition of the boundary trace, refer \cite{marcus}; Section $1.3$, p. $14$.
\item Throughout the manuscript, we will use the boundary condition in the sense of the Definition \ref{trace}. 
\end{enumerate}
\end{remark}
\noindent Before providing the notion of the weak solutions to the problems $\eqref{main para}$ and $\eqref{main para 2}$, we define the truncation functions, which will be used
henceforth very often. For a fixed $k>0$, we define the truncation function $T_k$ as $T_k(s)=\max\{-k,\min\{s,k\}\}$ and the level set as $G_k(s)=(|s|-k)^+sign(s)$. For $\gamma>0$ we define 
\begin{equation}\label{truncation}
T_{k,\gamma}(s)=\int_{0}^{s}T_k^\gamma(\tau) d\tau
\end{equation} and 
\begin{equation}\label{second truncation}
V_\gamma(s)=\begin{cases}
1 & s\leq \gamma\\
\frac{2\gamma-s}{\gamma} & k<s\leq 2\gamma\\
0 & s\geq 2\gamma. 
\end{cases}
\end{equation}
\noindent We now introduce the definition of solution we consider for problem $\eqref{main para}$, as well as the main results we prove concerning $\eqref{main para}$.
	\begin{definition}\label{weak main para}
	Assume $\delta^+<1$. Then, a weak solution to the problem $\eqref{main para}$ is a function $u\in L^1\left(0,T;W_0^{1,1}(\Omega)\right)$ such that, $\frac{g}{u^{\delta(\cdot)}}\in L^1\left(0,T;L^1_{loc}(\Omega)\right)$ and
		\begin{equation}\label{weak form main para}
		-\int_{Q_T}u\varphi_t-\int_{\Omega}u_0\varphi(x,0)+\int_{Q_T}|\nabla u|^{p(x)-2}\nabla u\cdot \nabla\varphi=\lambda\int_{Q_T}u^{q(x)-1}\varphi+\int_{Q_T}\frac{g\varphi}{u^{\delta(x)}}+\int_{\Omega}f\varphi
		\end{equation}
		for every $\varphi\in C_c^1(\Omega\times[0,T))$.\\
		Further, assume that $\delta^+\geq 1$. Then, a function $u\in L^1\left(0,T;W_{loc}^{1,1}(\Omega)\right)$ is said to be a weak solution to $\eqref{main para}$ if $\frac{g}{u^{\delta(\cdot)}}\in L^1\left(0,T;L^1_{loc}(\Omega)\right)$, $tr~u(\cdot,t)=0$ in the sense of Definition $\ref{trace}$ for every $t\in (0,T)$ and $u$ satisfies $\eqref{weak form main para}$ for every $\varphi \in C_c^1(\Omega\times[0,T))$.
	\end{definition}
\noindent Before moving towards the existence result of $\eqref{main para}$, we consider the following auxiliary singular problem, which is a type of problem $\eqref{main para}$ with $\lambda=0$:
\begin{align}\label{helping sing}
\frac{\partial v}{\partial t}-\Delta_{p(x)}v&= v^{-\delta(x)}g+ 2f&&\text{in}~Q_T,\nonumber\\
v&> 0&&\text{in}~Q_T,\nonumber\\
v&= 0&&\text{on}~\Sigma_T,\\
v(0,\cdot)&=v_0(\cdot)&&\text{in}~\Omega,\nonumber
\end{align}
where $u_0\leq v_0\in L^r(\Omega)$ with $r\geq 2$.\\
\noindent We now state our first and second main results of the paper in the following theorems.
\begin{theorem}\label{theorem 1}\label{help sing exist}
	Let the assumptions $(A1)$-$(A2)$ be satisfied. Then, for $\delta^+<1$, there exists a non-negative weak solution $v$ to $\eqref{helping sing}$  in $V^{r(\cdot)}(Q_T)$ for every $1\leq r(\cdot)<p(\cdot)-\frac{N}{N+1}$, in the sense of Definition $\ref{weak main para}$, and $T_k(v)\in V^{p(\cdot)}(Q_T)$ for every $k>0$. Similarly, for $\delta^+\geq 1$, problem $\eqref{helping sing}$ admits a non-negative weak solution $v\in V_{loc}^{r(\cdot)}(Q_T)$ for every $1\leq r(\cdot)<p(\cdot)-\frac{N}{N+1}$, in the sense of Definition $\ref{weak main para}$, and $T_k(v)\in V_{loc}^{p(\cdot)}(Q_T)$ for every $k>0$.
\end{theorem}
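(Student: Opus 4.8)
The plan is to realise $v$ as the monotone limit of solutions of \emph{non-singular} approximating problems and to pass to the limit by means of Boccardo--Gallou\"et type a priori estimates adapted to the generalised parabolic Sobolev spaces $V^{p(\cdot)}(Q_T)$. For $n\in\mathbb N$ put $f_n=T_n(f)$, $v_{0,n}=T_n(v_0)$, and let $v_n$ solve
\begin{align*}
\partial_t v_n-\Delta_{p(x)}v_n&=\frac{g}{\left(v_n+\frac1n\right)^{\delta(x)}}+2f_n\quad\text{in }Q_T,\\
v_n&=0\ \text{on }\Sigma_T,\qquad v_n(0,\cdot)=v_{0,n}\ \text{in }\Omega.
\end{align*}
Since $g\in L^\infty(\Omega)$, $\delta\in C(\overline\Omega)$ and $f_n,v_{0,n}\in L^\infty$, the datum is bounded, and one gets a unique $v_n\in V^{p(\cdot)}(Q_T)\cap C([0,T];L^2(\Omega))$ with $\partial_t v_n\in V^{p(\cdot)}(Q_T)^*$ by the standard Galerkin/monotonicity machinery for the parabolic $p(x)$-Laplacian (or by invoking the general results of Section \ref{section 2}). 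Because $n\mapsto\left(s+\tfrac1n\right)^{-\delta(x)}$, $n\mapsto f_n$ and $n\mapsto v_{0,n}$ are nondecreasing while $s\mapsto\left(s+\tfrac1n\right)^{-\delta(x)}$ is nonincreasing, the comparison principle gives $0\le v_n\le v_{n+1}$ a.e.; moreover, since the source is nonnegative and $g>0$, comparison with the solution of the problem with right-hand side $g\left(\,\cdot\,+1\right)^{-\delta(x)}$ and null data shows that for every compact $K\subset\Omega$ and $\tau\in(0,T)$ there is $c_{K,\tau}>0$ with $v_n\ge v_1\ge c_{K,\tau}$ on $K\times(\tau,T)$ for all $n$. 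Hence $v:=\lim_n v_n$ exists a.e., $v>0$ a.e.\ in $Q_T$, and $v$ is locally bounded below away from $t=0$.

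\emph{A priori estimates.} Testing the equation for $v_n$ with $T_k(v_n)$ and using $\Theta_k(s):=\int_0^sT_k(\sigma)\,d\sigma\le k|s|$ together with $u_0\in L^2(\Omega)$ yields $\int_{Q_T}|\nabla T_k(v_n)|^{p(x)}\le Ck$ with $C$ independent of $n$; when $\delta^+<1$ the singular contribution is handled through the elementary bound $v_n\big(v_n+\tfrac1n\big)^{-\delta(x)}\le 2^{1-\delta(x)}$ on $\{v_n\le1\}$, and when $\delta^+\ge1$ one repeats the computation with $T_k(v_n)\psi$, $\psi\in C_c^\infty(\Omega)$, $\psi\ge0$, using the local lower bound of the previous step. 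Combining this with $\|v_n\|_{L^\infty(0,T;L^1(\Omega))}\le C$ and the standard Boccardo--Gallou\"et interpolation performed in the scale $V^{r(\cdot)}(Q_T)$ — this is precisely where (A1) is used, as it guarantees $p^--\tfrac{N}{N+1}>1$ so that the target exponent is admissible — one obtains, uniformly in $n$: $\{v_n\}$ bounded in $V^{r(\cdot)}(Q_T)$ (resp.\ $V_{loc}^{r(\cdot)}(Q_T)$ if $\delta^+\ge1$) for every $1\le r(\cdot)<p(\cdot)-\tfrac{N}{N+1}$, $\{T_k(v_n)\}$ bounded in $V^{p(\cdot)}(Q_T)$ (resp.\ $V_{loc}^{p(\cdot)}(Q_T)$) for each $k>0$, and $\{\partial_t v_n\}$ bounded in $L^1(0,T;W^{-1,s}(\Omega))+L^1(Q_T)$ for a suitable $s>1$. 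Finally, the standard argument for singular problems (testing against truncations localised by a cutoff) shows that $\{g\left(v_n+\tfrac1n\right)^{-\delta(x)}\}$ is locally equi-integrable, hence bounded in $L^1(0,T;L^1_{loc}(\Omega))$.

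\emph{Compactness and passage to the limit.} By Aubin--Lions--Simon the bounds in $V^{r(\cdot)}$ and on $\partial_t v_n$ give $v_n\to v$ strongly in $L^1(Q_T)$ (consistently with the monotone limit) and $T_k(v_n)\rightharpoonup T_k(v)$ weakly in $V^{p(\cdot)}(Q_T)$. Using the almost everywhere convergence of gradients — obtained as usual by testing the difference of the equations with $T_\eta\!\left(v_n-T_k(v)\right)$ (times a cutoff in the local case) and exploiting the monotonicity of $\xi\mapsto|\xi|^{p(x)-2}\xi$, which yields $\nabla T_k(v_n)\to\nabla T_k(v)$ strongly in $L^{p(\cdot)}$ and hence $\nabla v_n\to\nabla v$ a.e.\ — one gets $|\nabla v_n|^{p(x)-2}\nabla v_n\to|\nabla v|^{p(x)-2}\nabla v$ a.e., and the uniform local bound on this quantity allows passage to the limit in $\int_{Q_T}|\nabla v_n|^{p(x)-2}\nabla v_n\cdot\nabla\varphi$ for $\varphi\in C_c^1(\Omega\times[0,T))$. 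For the singular term, $g\left(v_n+\tfrac1n\right)^{-\delta(x)}\to g\,v^{-\delta(x)}$ a.e.\ on $\{v>0\}$, which is a.e.\ $Q_T$, and local equi-integrability with Vitali's theorem gives convergence in $L^1(0,T;L^1_{loc}(\Omega))$; the terms involving $f_n$ and $v_{0,n}$ pass trivially. Thus $v$ satisfies \eqref{weak form main para} with $\lambda=0$ and $2f$ in place of $f$. When $\delta^+<1$ the membership $v\in V^{r(\cdot)}(Q_T)$ already encodes the zero Dirichlet datum; when $\delta^+\ge1$ one has only $v\in V_{loc}^{r(\cdot)}(Q_T)$, and $\mathrm{tr}\,v(\cdot,t)=0$ in the sense of Definition \ref{trace} is recovered by comparing $v(\cdot,t)$ with a supersolution that vanishes on $\Sigma_T$ (e.g.\ the solution of the non-singular problem with datum $g\,c_{K,\tau}^{-\delta(x)}+2f$), forcing its $M$-boundary trace to vanish for every $t\in(0,T)$.

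\emph{Main obstacle.} The delicate points are the Boccardo--Gallou\"et estimates with the \emph{sharp} exponent $p(\cdot)-\tfrac{N}{N+1}$ in the variable-exponent parabolic scale together with the attendant compactness, and the almost everywhere convergence of gradients when — for $\delta^+\ge1$ — only \emph{local} energy estimates are available and the singular term is not globally integrable. Controlling the behaviour near $t=0$, where the lower bound of the first step degenerates, and reconstructing the $M$-boundary trace in that regime, are the subtle ingredients of the argument.
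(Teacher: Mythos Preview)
Your overall strategy --- regularise, derive Boccardo--Gallou\"et type estimates in $V^{r(\cdot)}(Q_T)$, extract compactness, pass to the limit --- matches the paper's. For $\delta^+<1$ the sketch is essentially correct and close to what the paper does. The genuine gaps are in the strongly singular case $\delta^+\ge1$, where your argument rests on two ingredients that are not justified.

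\textbf{The local lower bound.} You assert $v_n\ge v_1\ge c_{K,\tau}>0$ on $K\times(\tau,T)$ by ``comparison with the solution of the problem with right-hand side $g(\cdot+1)^{-\delta(x)}$ and null data''. But that auxiliary solution is not shown to be strictly positive on compacts either; this would require a strong maximum principle or intrinsic Harnack inequality for the parabolic $p(x)$-Laplacian, which is nontrivial and not provided. Moreover, your local estimates for $T_k(v_n)$ are obtained by testing with $T_k(v_n)\psi$ over all of $Q_T$, yet the lower bound is only valid on $(\tau,T)$; you acknowledge the degeneration near $t=0$ but do not explain how to close the estimate there. The paper bypasses the lower bound entirely: for the local gradient estimate it tests with $(T_1(G_k(v_n))-1)\varphi^{p^+}$ and $(T_k(v_n)-k)\varphi^{p^+}$, which are \emph{nonpositive}, so the singular term enters with a favourable sign and can simply be dropped.

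\textbf{The boundary trace.} Your proposed supersolution ``the solution of the non-singular problem with datum $g\,c_{K,\tau}^{-\delta(x)}+2f$'' is not a single function: $c_{K,\tau}$ depends on $K$ and tends to $0$ as $K\uparrow\Omega$, so the datum blows up and no global comparison is available. This step does not work as written. The paper's device is different and self-contained: testing with $T_k^{\delta^+}(v_n)$ yields a \emph{global} bound for $T_k^{(p^-+\delta^+-1)/p^-}(v_n)$ in $L^{p^-}(0,T;W_0^{1,p^-}(\Omega))$, and from this one reads off $\mathrm{tr}\,v(\cdot,t)=0$ directly via Definition~\ref{trace}. You are missing this power-truncation estimate, which is precisely what encodes the boundary behaviour when $\delta^+\ge1$.

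In short: for $\delta^+\ge1$, replace the lower-bound route by sign-exploiting test functions for the local estimates, and add the estimate on $T_k^{(p^-+\delta^+-1)/p^-}(v_n)$ to recover the trace.
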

	\begin{remark}
	The same existence result holds if we replace $f$ by a positive bounded Radon measure $\mu$ in the problem $\eqref{helping sing}$.
\end{remark}
\begin{remark}\label{remark L1}
	\begin{enumerate}
		\item By the assumption $(A1)$, we have $p(\cdot)-\frac{N}{N+1}>1$. This guarantees the existence of functions $r(\cdot)$ with $1\leq r(\cdot)<p(\cdot)-\frac{N}{N+1}$.
		\item By the assumption $(A2)$, we have $q^+-1<p^- -\frac{N}{N+1}\leq p(\cdot)-\frac{N}{N+1}$. Thus, according to Theorem $\ref{help sing exist}$, for $\delta^+< 1$, the solution $v$ to $\eqref{helping sing}$ also belongs to $V^{q^+-1}(Q_T)\cap L^{q(\cdot)-1}(Q_T)$. 
	\end{enumerate}
\end{remark}
	\begin{theorem}\label{main result 1}
		Let $\lambda\leq 1$, $\delta^+<1$, and let the assumptions $(A1)$-$(A2)$ be satisfied. Further, assume that $f\in L^1(Q_T)$ verifies $f\geq v^{q(\cdot)-1}$ a.e. in $Q_T$, where $v$ is a non-trivial weak solution to $\eqref{helping sing}$ as obtained in Theorem $\ref{help sing exist}$. Then $\eqref{main para}$ admits a non-negative weak solution $u$ in $V^{r(\cdot)}(Q_T)$ for every $1\leq r(\cdot)<p(\cdot)-\frac{N}{N+1}$, in the sense of Definition $\ref{weak main para}$. Further, $T_k(u)\in V^{p(\cdot)}(Q_T)$ for every $k>0$.
	\end{theorem}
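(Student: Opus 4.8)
The plan is to realise $u$ as the limit of approximate problems with bounded data, using the supersolution $v$ to control both the singular term and, crucially, the superlinear power nonlinearity. Put $f_n=T_n(f)$, $u_{0,n}=T_n(u_0)$, and \emph{cap the power term by $f_n$ itself}: set $H_n(x,s)=\lambda\min\{(s^+)^{q(x)-1},f_n(x)\}$ and consider
\begin{align*}
\partial_t u_n-\Delta_{p(x)}u_n&=H_n(x,u_n)+\frac{g}{(u_n^++1/n)^{\delta(x)}}+f_n &&\text{in }Q_T,\\
u_n&=0 &&\text{on }\Sigma_T,\\
u_n(0,\cdot)&=u_{0,n} &&\text{in }\Omega.
\end{align*}
For each fixed $n$ the right-hand side is bounded and the reaction is continuous, so a nonnegative weak solution $u_n\in V^{p(\cdot)}(Q_T)\cap C([0,T];L^2(\Omega))$ exists by the standard theory for the parabolic $p(x)$-Laplacian used in Section~\ref{section 2}; $u_n\ge 0$ follows by comparison with $0$. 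The reason for truncating the power term by $f$, and not merely by $n$, is precisely to make the next step non-circular.

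\textbf{Upper barrier.} I would show $u_n\le v$ a.e.\ in $Q_T$ by testing the difference of the weak formulations of the $u_n$-equation and \eqref{helping sing} against a Landes-type time-regularised truncation of $(u_n-v)^+$, the regularisation being needed because $v$ only enjoys the (renormalised-type) regularity of Theorem~\ref{help sing exist}. On the set $\{u_n>v\}$ one has $H_n(x,u_n)+f_n\le f_n+f_n\le 2f$ and, since $v>0$ and $g\ge 0$ there, $g(u_n^++\tfrac1n)^{-\delta(x)}\le g\,v^{-\delta(x)}$, so the reaction for $u_n$ does not exceed that for $v$ on this set; together with $u_{0,n}\le u_0\le v_0$ and the monotonicity of $-\Delta_{p(x)}$ this forces $(u_n-v)^+\equiv 0$. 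Hence $0\le u_n\le v$, which yields the uniform control $0\le H_n(x,u_n)\le\lambda f\le f\in L^1(Q_T)$; moreover $u\le v$ will persist in the limit, where $u^{q(x)-1}\le v^{q(x)-1}\le f$, so the truncation in $H_n$ "opens up" and the correct limit term $\lambda u^{q(x)-1}$ is recovered.

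\textbf{Lower barrier, estimates, passage to the limit.} Compare from below with the solution $z_n$ of $\partial_t z_n-\Delta_{p(x)}z_n=g(z_n^++\tfrac1n)^{-\delta(x)}$, $z_n|_{\Sigma_T}=0$, $z_n(0,\cdot)=0$: on $\{z_n>u_n\}$ the reaction of $u_n$ dominates that of $z_n$, and $z_n(0,\cdot)=0\le u_{0,n}$, so $z_n\le u_n$; the $z_n$ increase with $n$, and by the strong maximum principle (as in Section~\ref{section 2}) $z_1\ge c_{K,\tau}>0$ on $K\times(\tau,T)$ for every compact $K\subset\Omega$ and $\tau\in(0,T)$, whence $u_n\ge c_{K,\tau}$ there uniformly in $n$ and $0\le g(u_n^++\tfrac1n)^{-\delta(x)}\le\|g\|_{L^\infty(\Omega)}c_{K,\tau}^{-\delta^+}$ on $K\times(\tau,T)$. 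Since the power term is now an $L^1$-forcing dominated uniformly by $f$, the $u_n$-problems are structurally the singular $L^1$-datum problems of Theorem~\ref{theorem 1} up to an $L^1$-perturbation of the datum; repeating those estimates (tests by $T_k(u_n)$, by $T_{k,\gamma}(u_n)$, Boccardo--Gallou\"{e}t iteration) gives $\{T_k(u_n)\}$ bounded in $V^{p(\cdot)}(Q_T)$, $\{u_n\}$ bounded in $V^{r(\cdot)}(Q_T)$ for every $1\le r(\cdot)<p(\cdot)-\tfrac{N}{N+1}$, $\{|\nabla u_n|^{p(\cdot)-1}\}$ bounded in some $L^{m(\cdot)}(Q_T)$ with $m^->1$, and $\{\partial_t u_n\}$ bounded in $L^1(Q_T)+V^{p(\cdot)}(Q_T)^*$. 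Along a subsequence, $u_n\to u$ in $L^{r(\cdot)}(Q_T)$ and a.e., $T_k(u_n)\rightharpoonup T_k(u)$ in $V^{p(\cdot)}(Q_T)$, and $\nabla u_n\to\nabla u$ a.e.\ (the Boccardo--Murat argument already used for Theorem~\ref{theorem 1}), so $|\nabla u_n|^{p(\cdot)-2}\nabla u_n\to|\nabla u|^{p(\cdot)-2}\nabla u$ in $L^1(Q_T)^N$ by Vitali. Passing to the limit in \eqref{weak form main para} tested against $\varphi\in C_c^1(\Omega\times[0,T))$ is then routine: the diffusion term by a.e.\ convergence and equi-integrability; the power term by dominated convergence, $H_n(x,u_n)\to\lambda u^{q(x)-1}$ a.e.\ dominated by $f$ (using $u\le v$, so $\min\{u^{q(x)-1},f\}=u^{q(x)-1}$); the singular term by dominated convergence on $\mathrm{supp}\,\varphi$ via the bound above, which also gives $g\,u^{-\delta(\cdot)}\in L^1(0,T;L^1_{loc}(\Omega))$; and $f_n\to f$, $u_{0,n}\to u_0$ in $L^1$. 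Lower semicontinuity of the norms yields $u\in V^{r(\cdot)}(Q_T)$ and $T_k(u)\in V^{p(\cdot)}(Q_T)$; $u\ge 0$ is clear and $u>0$ a.e.\ by the lower barrier, so $u$ solves \eqref{main para} in the sense of Definition~\ref{weak main para}.

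\textbf{Main obstacle.} Beyond invoking the a.e.-gradient-convergence and a priori machinery already developed for Theorem~\ref{theorem 1}, the genuinely new points are two. First, breaking the circularity of the upper comparison $u_n\le v$: one cannot estimate $u_n^{q(x)-1}$ before knowing $u_n\le v$, which is exactly why the power term must be capped by $f$ and not only by $n$, and then the weak regularity of $v$ makes the choice of an admissible test function for $(u_n-v)^+$ delicate. Second, the uniform-in-$n$ local positivity of $u_n$, needed both to pass to the limit in the singular term and to secure $u>0$; it is inherited from the general singular theory but must be stated with constants independent of $n$.
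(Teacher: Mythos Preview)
Your proposal is correct and reaches the same conclusion, but the route differs from the paper's in two places worth noting.

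\textbf{Approximation scheme.} The paper does \emph{not} cap the power term by $f_n$; instead it runs a two-level iteration: for fixed $n$ it defines $u_{n,0}=0$ and lets $u_{n,j+1}$ solve the problem with the \emph{previous} iterate in the power term, namely $\lambda\,T_n\bigl(u_{n,j}^{\,q(\cdot)-1}\bigr)$. One then proves by induction on $j$ that $u_{n,j}\le v$ (using $\lambda u_{n,j-1}^{q(x)-1}\le v^{q(x)-1}\le f$), and the monotone limit $u_{n,j}\uparrow u_n$ solves the approximate problem with $\lambda T_n(u_n^{q(\cdot)-1})$ while inheriting $u_n\le v$. Your device of replacing $T_n$ by $\min\{\,\cdot\,,f_n\}$ is a legitimate alternative that collapses the $j$-loop; both schemes serve the same purpose, namely making the comparison $u_n\le v$ non-circular, and after that both yield $u_n^{q(\cdot)-1}\le f$ in $L^1$.

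\textbf{Handling the test function and the singular term.} For the comparison with $v$, the paper does not use Landes regularisation: since each approximate solution is in $L^\infty(Q_T)$, it simply tests with $\bigl(u_{n,j}-T_k(v)\bigr)^+$ for $k>\|u_{n,j}\|_{L^\infty}$, exploiting that $T_k(v)\in V^{p(\cdot)}(Q_T)$ while $v$ itself need not be. The same trick applies verbatim to your $u_n$, and is cleaner than what you sketch. For the passage to the limit in the singular term, the paper does not invoke a lower barrier or a strong maximum principle; it repeats Step~2 of Theorem~\ref{theorem 1}, testing with $V_\gamma(u_n)\varphi$ to show $\displaystyle\lim_{\gamma\to0}\lim_{n\to\infty}\int_{\{u_n\le\gamma\}}g(u_n+1/n)^{-\delta(x)}\varphi=0$. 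Your appeal to a strong maximum principle for $z_1$ is not something the paper establishes (and for the parabolic $p(x)$-Laplacian with zero initial data it is not immediate), so that step would need independent work; it is also unnecessary, since the $V_\gamma$ argument already gives $g\,u^{-\delta(\cdot)}\in L^1(0,T;L^1_{loc}(\Omega))$ and the limit identity without any pointwise lower bound on $u$.
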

	\noindent Next we define the notion weak solution to $\eqref{main para 2}$ as follows.
	\begin{definition}\label{weak main para 2}
			Assume $\delta^+<1$. Then, a weak solution to the problem $\eqref{main para 2}$ is a function $u$ such that, $u\in V^{p(\cdot)}(Q_T)\cap L^\infty\left(0,T;L^r(\Omega)\right)\cap L^\infty((\eta,T)\times\Omega)$ for every $\eta\in (0,T)$, $\frac{{g}}{u^{\delta(\cdot)}}\in L^1\left(0,T;L^1_{loc}(\Omega)\right)$ and  for every $\varphi\in C_c^1(\Omega\times[0,T))$,
			\begin{equation}\label{weak form main para 2}
			-\int_{Q_T}u\varphi_t-\int_{\Omega}u_0\varphi(x,0)+\int_{Q_T}|\nabla u|^{p(x)-2}\nabla u\cdot \nabla\varphi=\lambda\int_{Q_T}u^{q(x)-1}\varphi+\int_{Q_T}\frac{{g}\varphi}{u^{\delta(x)}}.
			\end{equation}
			Further, assume that $\delta^+\geq1$. Then, a function $u$ is said to be a weak solution to $\eqref{main para 2}$ if $u\in V_{loc}^{p(\cdot)}(Q_T)\cap L^\infty\left(0,T;L^r(\Omega)\right)\cap L^\infty((\eta,T)\times\Omega)$ for every $\eta\in (0,T)$, $\frac{f}{u^{\delta(\cdot)}}\in L^1\left(0,T;L^1_{loc}(\Omega)\right)$, $tr~u(\cdot,t)=0$ in the sense of Definition $\ref{trace}$ for every $t\in (0,T)$ and $u$ satisfies $\eqref{weak form main para 2}$ for every $\varphi \in C_c^1(\Omega\times[0,T))$.
	\end{definition}
	\noindent The following theorem is the existence result for $\eqref{main para 2}$ and is our third main result.
	\begin{theorem}\label{main result 2}
		Let the assumptions $(B1)-(B4)$ hold. Then, there exists $\overline{T}>0$ such that for any $T<\overline{T}$, the problem $\eqref{main para 2}$ admits a weak solution $u$ in the sense of Definition $\ref{weak main para 2}$.
	\end{theorem}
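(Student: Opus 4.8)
The plan is to proceed by an approximation scheme exactly as announced in the introduction: for $n\in\mathbb{N}$ replace the singular term $u^{-\delta(x)}g$ by the bounded, nondecreasing-in-$u$ term $g/(u+1/n)^{\delta(x)}$ and truncate the power term so that the approximating nonlinearity is globally Lipschitz in $u$; more precisely I would solve
\begin{align*}
\frac{\partial u_n}{\partial t}-\Delta_{p(x)}u_n&=\lambda\,T_n\!\bigl(u_n^{q(x)-1}\bigr)+\frac{g}{(u_n+1/n)^{\delta(x)}}&&\text{in }Q_T,\\
u_n&=0&&\text{on }\Sigma_T,\\
u_n(0,\cdot)&=u_{0,n}(\cdot)&&\text{in }\Omega,
\end{align*}
with $0<u_{0,n}\in L^\infty(\Omega)$ a truncation of $u_0$ increasing to $u_0$ in $L^r(\Omega)$. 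Existence and uniqueness of $u_n\in V^{p(\cdot)}(Q_T)\cap C([0,T];L^2(\Omega))$ follows from standard monotone-operator / Galerkin theory for the $p(x)$-Laplacian (the right-hand side is a bounded Carath\'eodory perturbation), and a comparison argument gives $u_n>0$ on $Q_T$ together with monotonicity $u_n\le u_{n+1}$; positivity is what makes the singular term meaningful in the limit. The first genuine work is the a priori estimates: these are precisely the content of Subsection \ref{section 5.2}, which I am entitled to invoke. Testing with powers of $u_n$ (and with $T_k$-type functions), using the Gagliardo--Nirenberg--Sobolev inequality in the variable-exponent setting and Gronwall, one obtains — provided $T<\overline T$ is chosen small enough to absorb the superlinear power term, which is exactly where hypothesis (B3) $q^+<p^-(1+r/N)$ and (B4) $r>\max\{q^+,\delta^++1\}$ enter — uniform bounds for $u_n$ in $L^\infty(0,T;L^r(\Omega))$, for $\nabla u_n$ in $L^{p(\cdot)}(Q_T)$, and for $\partial_t u_n$ in a suitable dual space, plus a local-in-time $L^\infty$ smoothing estimate $\|u_n\|_{L^\infty((\eta,T)\times\Omega)}\le C(\eta)$ for each $\eta\in(0,T)$ (Moser iteration, again using that $u_0\in L^r$ with $r$ large).

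With the uniform estimates in hand I would pass to the limit. By reflexivity and the Aubin--Lions--Simon compactness lemma (in the version adapted to $V^{p(\cdot)}(Q_T)$, cf. the embeddings of Bendahmane et al. cited in the preliminaries) there is a subsequence with $u_n\rightharpoonup u$ in $V^{p(\cdot)}(Q_T)$, $u_n\to u$ strongly in $L^{p(\cdot)}(Q_T)$ and a.e. in $Q_T$, and $\partial_t u_n\rightharpoonup \partial_t u$; the $L^\infty(0,T;L^r)$ and local $L^\infty$ bounds pass to the limit by lower semicontinuity, giving the regularity demanded in Definition \ref{weak main para 2}. The almost-everywhere convergence together with the monotone pointwise lower bound $u_n\ge u_1>0$ on each cylinder $K\times(0,T)$, $K\Subset\Omega$, lets me control the singular term: $0\le g/(u_n+1/n)^{\delta(x)}\le g/u_1^{\delta(x)}\in L^1(K\times(0,T))$, so by dominated convergence the singular terms converge and $g/u^{\delta(\cdot)}\in L^1(0,T;L^1_{loc}(\Omega))$. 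For the truncated power term, $\lambda T_n(u_n^{q(x)-1})\to\lambda u^{q(x)-1}$ a.e., and equi-integrability of the sequence $\{u_n^{q(x)-1}\}$ in $L^1(Q_T)$ follows from the $L^\infty(0,T;L^r)$ and $V^{p(\cdot)}$ bounds via the parabolic interpolation estimate together with (B3) (which guarantees $q(\cdot)-1$ stays below the relevant interpolation exponent), so Vitali's theorem applies. The remaining point is identification of the weak limit of the flux $|\nabla u_n|^{p(x)-2}\nabla u_n$ with $|\nabla u|^{p(x)-2}\nabla u$; this I would do by the standard Minty--Browder monotonicity trick, testing the difference of the equations for $u_n$ and $u$ with $u_n-u$ (after an $\varepsilon$-regularization in time to make this an admissible test function against $\partial_t$, using the energy equality), and exploiting that the time and singular and power contributions vanish by the convergences just established, so that $\limsup_n \int_{Q_T}|\nabla u_n|^{p(x)-2}\nabla u_n\cdot\nabla(u_n-u)\le 0$, whence $\nabla u_n\to\nabla u$ a.e. and the flux converges.

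The main obstacle, and the place where the smallness of $T$ is unavoidable, is closing the a priori estimate for the superlinear power nonlinearity $\lambda u^{q(x)-1}$ with $q(x)\ge p(x)$: unlike the sublinear/$L^1$-datum situation of Theorem \ref{theorem 1}, here the power term can drive blow-up, so the energy inequality obtained after testing with $u_n^{r-1}$ produces a term $\lambda\int_0^t\int_\Omega u_n^{q(x)+r-2}$ that must be reabsorbed by the diffusion plus the $L^\infty(0,T;L^r)$ norm via a Gronwall-type inequality of the form $y'(t)\le C\,y(t)^{\theta}$ with $\theta>1$ (the exponent $\theta$ being exactly what (B3)–(B4) make admissible through Sobolev embedding); such an inequality only yields a bound on a maximal interval $[0,\overline T)$, which is the origin of the threshold $\overline T$ in the statement. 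A secondary technical difficulty is the rigorous justification of the energy/entropy-type test functions against the time derivative for merely $V^{p(\cdot)}(Q_T)$ functions whose time derivative lies in a dual space — the standard remedy is Steklov averaging or mollification in time, carried out on the approximate problems where $u_n$ is regular enough, and then passing to the limit.
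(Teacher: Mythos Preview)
Your overall plan matches the paper's: approximate, invoke the a priori estimates of Subsection~\ref{section 5.2}, pass to the limit; and you correctly locate the origin of $\overline T$ in the superlinear differential inequality $y'\le Cy^\theta$, $\theta>1$, coming from the power term via Gagliardo--Nirenberg. There are, however, two genuine gaps.

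First, you implicitly treat only $\delta^+\le 1$. When $\delta^+>1$ the global test with $u_n$ fails (the contribution $\int_{Q_T} g\,u_n/(u_n+1/n)^{\delta(x)}$ is not controlled near $\{u_n\simeq 0\}$), and Lemma~\ref{1-2 lemma} yields only a \emph{local} bound $\{u_n\}\subset V^{p(\cdot)}_{loc}(Q_T)$; hence your claimed weak convergence in $V^{p(\cdot)}(Q_T)$ and the recovery of $u=0$ on $\Sigma_T$ from a global energy space are unjustified in that regime. The paper handles this by the additional estimate on $\bigl\{u_n^{(p^-+r-2)/p^-}\bigr\}$ in $L^{p^-}(0,T;W_0^{1,p^-}(\Omega))$ (Lemma~\ref{1-2 lemma}) and then verifies the boundary condition via the $M$-boundary trace of Definition~\ref{trace}, exactly as in Step~5 of Theorem~\ref{theorem 1}. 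This entire branch is absent from your sketch.

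Second, your passage to the limit in the singular term rests on $u_n\ge u_1>0$ uniformly on $K\times(0,T)$ for $K\Subset\Omega$. Monotonicity in $n$ is defensible (the truncated nonlinearity is Lipschitz, so a comparison/Gronwall argument applies), but the \emph{uniform} positivity of $u_1$ on compacts is not: the hypothesis $0<u_0\in L^r(\Omega)$ gives no lower bound for $u_{0,1}=T_1(u_0)$ on $K$, and a parabolic Harnack/strong-maximum principle for the $p(x)$-Laplacian is neither assumed nor proved in the paper. The paper bypasses this via the $V_\gamma$-cutoff device (see \eqref{second truncation} and \eqref{singular term}): testing the approximate equation with $V_\gamma(u_n)\varphi$ and using only the energy bounds on $T_k(u_n)$ one shows $\lim_{\gamma\to 0}\lim_{n}\int_{\{u_n\le\gamma\}}g\varphi/(u_n+1/n)^{\delta(x)}=0$, which requires no pointwise lower bound on the approximants. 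Finally, for the gradient identification the paper uses Landes' time regularization $u_r$ and the localized test function $(u_n-u_r)\phi^{p^+}$ with $\phi\in C_c^1(Q_T)$ (localization being essential when $\delta^+>1$); your Minty--Browder sketch is close in spirit but mis-stated---you cannot test ``the difference of the equations for $u_n$ and $u$'' since $u$ does not yet solve any equation.
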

\noindent We now prove a weak comparison principle which is a very useful tool to establish a comparison between singular parabolic problems with $p(x)$-Laplacian.
	\begin{theorem}[Comparison principle]\label{comparison}
		Let $u_0,v_0\in L^2(\Omega)$. Suppose $u,v\in V^{p(\cdot)}(Q_T)$ such that $\frac
		{\partial u}{\partial t}, \frac
		{\partial v}{\partial t}\in V^{p(\cdot)}(Q_T)^*$, and  $$\frac{\partial u}{\partial t}-\Delta_{p(x)}u-\frac{1}{(u+c)^{\delta(x)}}\leq \frac{\partial v}{\partial t}-\Delta_{p(x)}v-\frac{1}{(v+c)^{\delta(x)}} ~\text{weakly in}~ Q_T, ~c>0,$$ $u(0,\cdot)=u_0(\cdot)\leq v(0,\cdot)=v_0(\cdot)$ in $\Omega$,  $u=v=0$ on $\Sigma_T$. Then, $u\leq v$ a.e. in $Q_T$.
		\begin{proof}
		Since $\frac{\partial u}{\partial t}-\Delta_{p(x)}u-\frac{1}{(u+c)^{\delta(x)}}\leq \frac{\partial v}{\partial t}-\Delta_{p(x)}v-\frac{1}{(v+c)^{\delta(x)}}$ weakly in $Q_T$ with $u=v=0$ on $\Sigma_T$, we have
		$$\int_{Q_T}u_t\varphi+\int_{Q_T}|\nabla u|^{p(x)-2}\nabla u\cdot \nabla\varphi-\int_{Q_T}\frac{\varphi}{(u+c)^{\delta(x)}}\leq \int_{Q_T}v_t\varphi+\int_{Q_T}|\nabla v|^{p(x)-2}\nabla v\cdot \nabla\varphi-\int_{Q_T}\frac{\varphi}{(v+c)^{\delta(x)}}$$ for every $\varphi\in V^{p(\cdot)}(Q_T)$, $\varphi\geq 0$. We rigorously choose $\varphi=(u-v)^+\chi_{\{0,t\}}$ for any $t\in(0,T]$ and we get 
		$$\int_{Q_t}(u-v)_t(u-v)^++\int_{Q_t}\left(|\nabla u|^{p(x)-2}\nabla u-|\nabla v|^{p(x)-2}\nabla v\right)\cdot \nabla(u-v)^+ \leq 0.$$
	Moreover, from the nonnegativity of the second term present in the above inequality, we have
	\begin{align}
	\frac{1}{2}\int_{Q_t}\frac{d}{dt}[(u-v)^+]^2\leq 0.\nonumber
	\end{align}
	Since $u_0\leq v_0$ in $L^2(\Omega)$, we get $(u-v)^+=0$ a.e. in $Q_T$. Thus, $u\leq v$ a.e. in $Q_T$.
		\end{proof}
	\end{theorem}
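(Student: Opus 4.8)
The plan is to test the difference of the two weak formulations with the positive part of $u-v$, localised in time. First I would subtract the weak inequality satisfied by $u$ from the one satisfied by $v$ to obtain, for every admissible $\varphi\ge 0$,
\[
\int_{Q_T}(u-v)_t\varphi+\int_{Q_T}\bigl(|\nabla u|^{p(x)-2}\nabla u-|\nabla v|^{p(x)-2}\nabla v\bigr)\cdot\nabla\varphi-\int_{Q_T}\Bigl(\tfrac{1}{(u+c)^{\delta(x)}}-\tfrac{1}{(v+c)^{\delta(x)}}\Bigr)\varphi\le 0.
\]
Since $u,v\in V^{p(\cdot)}(Q_T)$ and the positive-part map is continuous on $W_0^{1,p(\cdot)}(\Omega)$, the function $(u-v)^+$ again lies in $V^{p(\cdot)}(Q_T)$, so $\varphi=(u-v)^+\chi_{(0,t)}$ is a legitimate choice for each $t\in(0,T]$.

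Next I would analyse the three terms with this choice of $\varphi$. On the set $\{u>v\}$ one has $\nabla(u-v)^+=\nabla(u-v)$, so by the monotonicity of the vector field $\xi\mapsto|\xi|^{p(x)-2}\xi$ the elliptic term is nonnegative; on the same set $u+c>v+c>0$ forces $(u+c)^{-\delta(x)}\le(v+c)^{-\delta(x)}$, so the bracket in the singular term is $\le 0$ and hence $-\bigl((u+c)^{-\delta(x)}-(v+c)^{-\delta(x)}\bigr)\varphi\ge 0$. Discarding these two nonnegative contributions leaves $\int_{Q_t}(u-v)_t(u-v)^+\le 0$.

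For the parabolic term I would invoke the standard integration-by-parts formula for functions $w$ with $w\in V^{p(\cdot)}(Q_T)$ and $w_t\in V^{p(\cdot)}(Q_T)^*$ (applied to $w=u-v$), which gives $\int_{Q_t}(u-v)_t(u-v)^+=\tfrac12\|(u-v)^+(t)\|_{L^2(\Omega)}^2-\tfrac12\|(u_0-v_0)^+\|_{L^2(\Omega)}^2$. Since $u_0\le v_0$ the last term vanishes, whence $\|(u-v)^+(t)\|_{L^2(\Omega)}^2\le 0$ for a.e. $t\in(0,T)$, i.e. $u\le v$ a.e. in $Q_T$.

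The main obstacle is making the choice $\varphi=(u-v)^+\chi_{(0,t)}$ rigorous: the characteristic function in time is not smooth and the chain rule $\int w_t\,w^+=\tfrac12\tfrac{d}{dt}\|w^+\|_{L^2}^2$ is not valid for a general $w\in V^{p(\cdot)}(Q_T)$. I would handle this by a Steklov time-averaging (or time-mollification) argument: replace $\chi_{(0,t)}$ by a Lipschitz approximation, work with the Steklov averages of $u$ and $v$ for which the pointwise time derivative exists, derive the inequality at that level, and then pass to the limit using the $V^{p(\cdot)}(Q_T)^*$–$V^{p(\cdot)}(Q_T)$ duality together with the continuity of $s\mapsto s^+$. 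Everything else — the monotonicity inequality for $|\xi|^{p(x)-2}\xi$ and the monotonicity of $s\mapsto(s+c)^{-\delta(x)}$ — is elementary.
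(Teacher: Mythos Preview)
Your proposal is correct and follows essentially the same route as the paper: test the difference of the weak formulations with $(u-v)^+\chi_{(0,t)}$, drop the elliptic term by monotonicity of $\xi\mapsto|\xi|^{p(x)-2}\xi$ and the singular term by monotonicity of $s\mapsto(s+c)^{-\delta(x)}$, and conclude from $\tfrac12\tfrac{d}{dt}\|(u-v)^+\|_{L^2}^2\le 0$ together with $u_0\le v_0$. Your additional remarks on justifying the test function and the chain rule via Steklov averaging are in fact more careful than the paper's presentation, which simply asserts the choice ``rigorously'' without further comment.
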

	\begin{remark}
		We will denote several constants by $C$ which can only depend on $\Omega,~N$ and independent of the indices of the sequences. The value of $C$ can be different from line to line and sometimes, on the same line.
	\end{remark}
	\noindent The general form of an approximating problem to problems of type $\eqref{main para}$ is as follows:
	 \begin{align}\label{approx}
	 \frac{\partial w_n}{\partial t}-\Delta_{p(x)}w_n&=\lambda h_n(w_n)+{(w_n+1/n)^{-\delta(x)}}g+ \beta f_n&&\text{in}~Q_T,\nonumber\\
	 w_n&> 0&&\text{in}~Q_T,\nonumber\\
	 w_n&= 0&&\text{on}~\Sigma_T,\\
	 w_n(0,\cdot)&=w_{0,n}(\cdot)&&\text{in}~\Omega.\nonumber
	 \end{align}
	 	Here, $\lambda,\beta\geq 0$, $h_n(w_n)=T_n\left(w_n^{q(\cdot)-1}\right)$, $w_{0,n}=T_n(w_0)$, $w_0\in L^r(\Omega)$ with $r\geq 2$, and the sequence $\{f_n\}\subset L^\infty(Q_T)$ is such that $f_n\rightarrow f$ strongly in $L^1(Q_T)$. \\
	 	For a fixed $n\in\mathbb{N}$, the right hand side of $\eqref{approx}$ is $L^\infty$ bounded. This allows us to use the standard methods (for example, Schauder's theorem \cite{Oliva para}, variation methods \cite{Zhang} etc.) to obtain the following existence result. An alternate method is discussed in the Appendix with the help of a semi-discretization approach in time. 
		\begin{lemma}\label{exist approx sing}
			For a fixed $n\in \mathbb{N}$, the problem $\eqref{approx}$ admits at least one non-trivial weak solution  $w_n\in V^{p(\cdot)}(Q_T)\cap L^\infty(Q_T)\cap C(0,T;L^2(\Omega))$ with $\frac{\partial w_n}{\partial t}\in V^{p(\cdot)}(Q_T)^*$.
		\end{lemma}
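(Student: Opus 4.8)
\textbf{Proof proposal for Lemma \ref{exist approx sing}.}

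The plan is to solve \eqref{approx} for fixed $n$ by freezing the nonlinear reaction terms and applying a fixed-point argument. First I would set up the solution operator: given $z\in L^{p^-}(0,T;W_0^{1,p(\cdot)}(\Omega))$, consider the linear-in-the-leading-term problem $\partial_t w-\Delta_{p(x)}w = F_n(z)$ with $w(0,\cdot)=w_{0,n}$, where $F_n(z):=\lambda h_n(z)+(z^++1/n)^{-\delta(x)}g+\beta f_n$. For fixed $n$ this right-hand side is bounded in $L^\infty(Q_T)$ independently of $z$: indeed $0\le h_n(z)=T_n(z^{q(\cdot)-1})\le n$, $0\le(z^++1/n)^{-\delta(x)}\le n^{\delta^+}$, and $f_n\in L^\infty(Q_T)$. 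Since $F_n(z)\in L^\infty(Q_T)\subset V^{p(\cdot)}(Q_T)^*$ and $w_{0,n}=T_n(w_0)\in L^\infty(\Omega)\subset L^2(\Omega)$, the standard monotone-operator theory for the $p(x)$-parabolic equation (the operator $w\mapsto-\Delta_{p(x)}w$ is monotone, hemicontinuous and coercive on $V^{p(\cdot)}(Q_T)$) yields a unique solution $w=:S(z)\in V^{p(\cdot)}(Q_T)\cap C([0,T];L^2(\Omega))$ with $\partial_t w\in V^{p(\cdot)}(Q_T)^*$; this is exactly the existence result one obtains either by Schauder's theorem applied to the Galerkin scheme or by the semi-discretization in time described in the Appendix. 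The $L^\infty(Q_T)$ bound on the solution follows from testing with $G_k(w)$ (truncation above level $k=\|w_{0,n}\|_\infty+\|F_n(z)\|_\infty T$ suitably, or a Stampacchia iteration), again uniformly in $z$.

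Next I would verify that $S$ maps a suitable closed convex set into itself and is compact and continuous, so Schauder's fixed-point theorem applies. Testing $S(z)$ with $w=S(z)$ itself and using coercivity gives an a priori bound $\|S(z)\|_{V^{p(\cdot)}(Q_T)}+\|\partial_t S(z)\|_{V^{p(\cdot)}(Q_T)^*}\le C(n)$ depending only on $n$, $\|w_0\|_{L^2}$, $\|f_n\|_\infty$, $\Omega$, $T$ — hence $S$ maps the ball $B_{C(n)}$ of $V^{p(\cdot)}(Q_T)\cap L^\infty(Q_T)$ into itself. Compactness of $S$ comes from the Aubin–Lions–Simon lemma: the image is bounded in $V^{p(\cdot)}(Q_T)\hookrightarrow L^{p^-}(0,T;W_0^{1,p(\cdot)}(\Omega))$ with time-derivative bounded in $L^{(p^+)'}(0,T;(W^{1,p(\cdot)}(\Omega))^*)$, and $W_0^{1,p(\cdot)}(\Omega)\hookrightarrow\hookrightarrow L^{p^-}(\Omega)\hookrightarrow (W^{1,p(\cdot)}(\Omega))^*$, so the image is relatively compact in $L^{p^-}(Q_T)$. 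Continuity of $S$: if $z_j\to z$ in $L^{p^-}(Q_T)$ then, up to a subsequence, $z_j\to z$ a.e., and by dominated convergence (the dominating functions are the constants above) $F_n(z_j)\to F_n(z)$ strongly in every $L^s(Q_T)$, $s<\infty$; passing to the limit in the equation for $S(z_j)$ using the a priori bounds and the monotonicity of $-\Delta_{p(x)}$ (Minty's trick to identify the limit of $|\nabla w_j|^{p(x)-2}\nabla w_j$) shows $S(z_j)\to S(z)$. Schauder then produces a fixed point $w_n=S(w_n)$, which is the desired weak solution; non-triviality follows since $\beta f_n+ (w_n+1/n)^{-\delta}g\ge 0$ with strictly positive singular part forcing $w_n\not\equiv 0$ (alternatively, a comparison with the solution of the problem with zero data shows $w_n>0$, giving $w_n>0$ in $Q_T$).

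The main obstacle is the identification of the weak limit of the $p(x)$-Laplacian term when passing to the limit — that is, showing $|\nabla w_j|^{p(x)-2}\nabla w_j\rightharpoonup|\nabla w|^{p(x)-2}\nabla w$ — since weak convergence of gradients is not enough and the variable exponent precludes some classical tricks. The standard remedy is to test with $w_j-w$, use the a.e. convergence of $w_j$ and the strong $L^1$ convergence of the right-hand sides to show $\int_{Q_T}(|\nabla w_j|^{p(x)-2}\nabla w_j-|\nabla w|^{p(x)-2}\nabla w)\cdot\nabla(w_j-w)\to 0$, and then invoke the strict monotonicity of the vector field $\xi\mapsto|\xi|^{p(x)-2}\xi$ together with a Vitali-type argument to upgrade to $\nabla w_j\to\nabla w$ a.e. (and strongly in $L^{p(\cdot)}(Q_T)$); this is by now routine in the $p(x)$ literature (cf. \cite{Bendahmane}) but must be carried out carefully because of the $x$-dependence of the exponent. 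The time-derivative term is handled by the integration-by-parts formula for functions in $V^{p(\cdot)}(Q_T)$ with dual-space time derivative, which is available by the embeddings recorded after \eqref{V}. Everything else — coercivity estimates, truncation for the $L^\infty$ bound, Aubin–Lions compactness — is standard for fixed $n$, precisely because the approximation has removed both the singularity at $u=0$ and the superlinear growth of $u^{q(x)-1}$.
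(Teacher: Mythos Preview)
Your proposal is sound and follows the Schauder fixed-point route that the paper itself cites, just before the lemma, as one of the ``standard methods'' (referring to \cite{Oliva para}); however, the paper's written-out proof in the Appendix takes a genuinely different path, namely time semi-discretization (Rothe's method). There the interval $[0,T]$ is split into steps of size $\eta=T/M$ and at each step an \emph{elliptic} problem is solved in which the regularized singular term $(w_n^{(m)}+1/n)^{-\delta(x)}g$ is kept implicit while $h_n(w_n^{(m-1)})$ and the Steklov average $[f_n]_\eta$ are taken from the previous step; uniform-in-$\eta$ estimates for the piecewise-constant and piecewise-linear interpolants $w_{n,\eta}$, $\tilde w_{n,\eta}$ in $V^{p(\cdot)}(Q_T)$, $L^\infty(Q_T)$ and for $\partial_t\tilde w_{n,\eta}$ in $V^{p(\cdot)}(Q_T)^*$ are obtained by testing with $\eta w_n^{(m)}$ and with $\tfrac{t_m+t_{m-1}}{2}(w_n^{(m)}-w_n^{(m-1)})$, while the $L^\infty$ bound comes from a comparison-principle argument with an explicit supersolution rather than from the Stampacchia iteration you suggest. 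The passage $\eta\to 0^+$ then uses Aubin--Lions compactness and a convexity/monotonicity argument to upgrade to strong convergence of gradients, plus a separate limit to recover $w_n(0)=w_{0,n}$. What each approach buys: your argument is more direct---it leans on the already-known well-posedness of the monotone parabolic problem with bounded source and reduces the work to checking compactness and continuity of $S$ via Aubin--Lions and Minty---whereas the paper's Rothe scheme is more constructive, reduces everything to elliptic problems, and preserves the comparison structure at each step (which is what yields its $L^\infty$ bound). Both routes face exactly the obstacle you isolate, identifying the weak limit of $|\nabla\cdot|^{p(x)-2}\nabla\cdot$, and both resolve it by the monotonicity of the $p(x)$-Laplacian.
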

	\section{Proof of Theorem $\ref{help sing exist}$}\label{section 2}
\noindent In this section, we establish the existence result for $\eqref{helping sing}$ by proving Theorem $\ref{help sing exist}$. We follow the method of approximation. For this purpose, consider $v_n$ to be a weak solution to the approximating problem $\eqref{approx}$ with $\lambda=0$, $\beta=2$ and $w_0=v_0$, i.e. $v_n$ satisfies:
 \begin{align}\label{approx helping sing}
 \frac{\partial v_n}{\partial t}-\Delta_{p(x)}v_n&={(v_n+1/n)^{-\delta(x)}}g+ 2 f_n&&\text{in}~Q_T,\nonumber\\
 v_n&> 0&&\text{in}~Q_T,\nonumber\\
 v_n&= 0&&\text{on}~\Sigma_T,\\
 v_n(0,\cdot)&=v_{0,n}(\cdot)&&\text{in}~\Omega.\nonumber
 \end{align}
The corresponding weak formulation is given by 
 \begin{equation}\label{approx weak form helping}
 -\int_{Q_T}v_n\varphi_t-\int_{\Omega}v_{0,n}\varphi(x,0)+\int_{Q_T}|\nabla v_n|^{p(x)-2}\nabla v_n\cdot \nabla\varphi=2\int_{Q_T}f_n\varphi+\int_{Q_T}\frac{g\varphi}{(v_n+1/n)^{\delta(x)}}
 \end{equation}
 \noindent for every $\varphi\in C_c^1(\Omega\times[0,T))$.\\
	\noindent We now prove some required a priori estimates to pass the limit $n\rightarrow\infty$ in $\eqref{approx weak form helping}$ to obtain a weak solution to $\eqref{helping sing}$. Due to the presence of the power $u^{\delta(x)}$ which induces a singular behavior near 0, we divide the proofs among the cases $\delta^+<1$ and  $\delta^+\geq 1$. We deliberately consider $\delta^+\geq1$ as the strongly singular case due to the discontinuity of the enegy functional corresponding to $\eqref{helping sing}$ near 0. If $\delta^+<1$, we find a global solution to $\eqref{helping sing}$; otherwise, we find a local solution.  
	\begin{lemma}\label{infinity bound}
	The sequence $\{v_n\}$ is uniformly bounded in $L^\infty\left(0,T;L^1(\Omega)\right)$.
		\begin{proof}
			Fix a $t\in(0,T]$ and denote $Q_t=\Omega\times (0,t)$. Let us multiply $\eqref{approx helping sing}$ by $T_1^\gamma(v_n)$, for $\gamma=\max\{1,\delta^+\}$. Then, integrating  over $Q_t$, we obtain that
			\begin{align}\label{1}
			\int_{Q_t}(v_n)_t T_1^\gamma(v_n)+\gamma\int_{Q_t}|\nabla T_1(v_n)|^{p(x)}T_1^{\gamma-1}(v_n)=2\int_{Q_t}f_nT_1^\gamma(v_n)+\int_{Q_t}\frac{g T_1^\gamma(v_n)}{(v_n+1/n)^{\delta(x)}}.
			\end{align}
			From the above equation $\eqref{1}$, we get	
			\begin{align}
			\int_{Q_t}(T_{1,\gamma}(v_n))_t&\leq 2 \int_{Q_t}f+\int_{Q_t\cap\{v_n\leq 1\}}v_n^{\gamma-\delta(x)}g+\int_{Q_t\cap\{v_n> 1\}}v_n^{-\delta(x)}g\nonumber\\&\leq C+C\|g\|_{L^\infty(\Omega)},
			\end{align}	
			where the function $T_{1,\gamma}(\cdot)$ is defined in $\eqref{truncation}$. By the definition of $T_{1,\gamma}(\cdot)$ we obtain $T_{1,\gamma}(s)\geq s-1$. Hence,
			\begin{align}
			\int_{\Omega}v_n(x,t)&\leq C+C\|g\|_{L^\infty(\Omega)}+|\Omega|+\int_\Omega T_{1,\gamma}(v_0).\nonumber
			\end{align}
			This implies
			\begin{equation}\label{infinity bounded}
			\|v_n\|_{L^\infty\left(0,T;L^1(\Omega)\right)}\leq C.
			\end{equation}
		\end{proof}
	\end{lemma}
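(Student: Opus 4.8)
The plan is to use the standard "multiply by a suitable test function and integrate" strategy that controls the $L^\infty(0,T;L^1(\Omega))$ norm via the parabolic structure, exactly in the spirit of the Lemma's own setup. The key observation is that the right-hand side of the approximating problem $\eqref{approx helping sing}$ is controlled uniformly in $n$ once we hit it with a bounded test function: the datum $2f_n$ is bounded in $L^1(Q_T)$ (since $f_n\to f$ in $L^1$), and the singular term $g\,(v_n+1/n)^{-\delta(x)}$, although it blows up as $v_n\to 0$, is harmless when paired against the truncation $T_1^\gamma(v_n)$ because on the set $\{v_n\le 1\}$ we have $v_n^{\gamma}/(v_n+1/n)^{\delta(x)}\le v_n^{\gamma-\delta(x)}\le 1$ (using $\gamma=\max\{1,\delta^+\}\ge\delta(x)$), while on $\{v_n>1\}$ the factor $(v_n+1/n)^{-\delta(x)}$ is simply $\le 1$. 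Hence the whole singular contribution is bounded by $\|g\|_{L^\infty(\Omega)}|Q_T|$, uniformly in $n$.

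Concretely, I would first fix $t\in(0,T]$, multiply $\eqref{approx helping sing}$ by $T_1^\gamma(v_n)$ and integrate over $Q_t$ to get $\eqref{1}$. The gradient term $\gamma\int_{Q_t}|\nabla T_1(v_n)|^{p(x)}T_1^{\gamma-1}(v_n)$ is nonnegative (this uses $v_n>0$ so that $T_1^{\gamma-1}(v_n)\ge 0$), so it can be dropped after moving it to the right. The parabolic term is handled by recognizing $\int_{Q_t}(v_n)_t T_1^\gamma(v_n)=\int_{Q_t}\partial_t\bigl(T_{1,\gamma}(v_n)\bigr)$ with $T_{1,\gamma}$ as in $\eqref{truncation}$, which by the fundamental theorem of calculus in $t$ equals $\int_\Omega T_{1,\gamma}(v_n(x,t))\,dx-\int_\Omega T_{1,\gamma}(v_{0,n})\,dx$. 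Combining, I bound $\int_\Omega T_{1,\gamma}(v_n(x,t))\,dx\le C+C\|g\|_{L^\infty(\Omega)}+\int_\Omega T_{1,\gamma}(v_{0,n})\,dx$, and since $v_{0,n}=T_n(v_0)\le v_0$ and $T_{1,\gamma}$ is nondecreasing, $\int_\Omega T_{1,\gamma}(v_{0,n})\le\int_\Omega T_{1,\gamma}(v_0)<\infty$ because $v_0\in L^r(\Omega)\subset L^1(\Omega)$ and $T_{1,\gamma}(s)$ grows linearly in $s$. Finally, the elementary inequality $T_{1,\gamma}(s)\ge s-1$ for $s\ge 0$ (since $T_1^\gamma(\tau)=1$ for $\tau\ge 1$, so $T_{1,\gamma}(s)=\int_0^1 \tau^\gamma d\tau+(s-1)\ge s-1$ when $s\ge1$, and the bound is trivial for $s\le 1$) converts the estimate on $\int_\Omega T_{1,\gamma}(v_n(\cdot,t))$ into $\int_\Omega v_n(x,t)\,dx\le C+|\Omega|$, with a constant independent of $t$ and $n$; taking the supremum over $t\in(0,T]$ gives $\|v_n\|_{L^\infty(0,T;L^1(\Omega))}\le C$.

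The main obstacle, and the only place that requires genuine care rather than bookkeeping, is the justification of using $T_1^\gamma(v_n)$ as a test function in the weak formulation $\eqref{approx weak form helping}$: $T_1^\gamma(v_n)$ is not of class $C_c^1(\Omega\times[0,T))$, and the integration-by-parts identity $\int_{Q_t}(v_n)_t T_1^\gamma(v_n)=\int_\Omega T_{1,\gamma}(v_n(\cdot,t))-\int_\Omega T_{1,\gamma}(v_{0,n})$ needs $v_n$ to have enough time regularity for the chain rule to apply. This is legitimate here because Lemma \ref{exist approx sing} guarantees $v_n\in V^{p(\cdot)}(Q_T)\cap L^\infty(Q_T)\cap C(0,T;L^2(\Omega))$ with $\partial_t v_n\in V^{p(\cdot)}(Q_T)^*$, so $T_1^\gamma(v_n)\in V^{p(\cdot)}(Q_T)\cap L^\infty$ is an admissible test function (by the usual density argument, since $T_1^\gamma$ is Lipschitz and $T_1^\gamma(0)=0$), and the parabolic integration-by-parts formula for functions in $V^{p(\cdot)}(Q_T)$ with dual-space time derivative applies verbatim. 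One should also note the use of $\chi_{(0,t)}$ in time is harmless by the same reasoning used in the proof of Theorem \ref{comparison}. Everything else is a routine estimate.
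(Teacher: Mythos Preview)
Your proposal is correct and follows essentially the same approach as the paper's proof: the same test function $T_1^\gamma(v_n)$ with $\gamma=\max\{1,\delta^+\}$, the same splitting of the singular term into $\{v_n\le 1\}$ and $\{v_n>1\}$, and the same use of $T_{1,\gamma}(s)\ge s-1$ to convert the estimate into an $L^1$ bound. In fact you supply more justification than the paper does (admissibility of the test function via Lemma~\ref{exist approx sing}, the chain rule for the time term, and the linear growth of $T_{1,\gamma}$ ensuring $\int_\Omega T_{1,\gamma}(v_0)<\infty$), all of which is accurate.
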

	\begin{lemma}\label{lemma1 para}
		Let $\delta^+<1$. Then, $\{v_n\}$ is bounded in $V^{r(\cdot)}(Q_T)$ for every $1\leq r(x)<p(x)-\frac{N}{N+1}$ for all $x\in\overline{\Omega}$. Moreover, $\{T_k(v_n)\}$ is bounded in $V^{p(\cdot)}(Q_T)$ for any $k>0$.
		\begin{proof}
			Let us multiply $T_k(v_n)$ in $\eqref{approx helping sing}$ and integrate over $Q_T$ to obtain
			\begin{align}
			\int_{Q_T}(v_n)_t T_k(v_n)+\int_{Q_T}|\nabla T_k(v_n)|^{p(x)}&=2\int_{Q_T}f_nT_k(v_n)+\int_{Q_T}\frac{g_nT_k(v_n)}{(v_n+1/n)^{\delta(x)}}\nonumber\\&\leq C_2 k.\nonumber
			\end{align}
		This gives 
			\begin{equation}\label{consider 2}
			\int_{Q_T}|\nabla T_k(v_n)|^{p(x)}\leq C_3k,
			\end{equation}
		and hence
			\begin{equation}\label{Ck}
			\| T_k(v_n)\|_{V^{p(\cdot)}(Q_T)}\leq Ck.
			\end{equation}
By proceeding similarly with the test function $T_1(G_k(v_n))$ we get
\begin{equation}\label{k<k+1}
\int_{Q_T\cap\{k<v_n<k+1\}}|\nabla v_n|^{p(x)}\leq C_4.
\end{equation}
With the consideration of Lemma $\ref{infinity bound}$, $\eqref{k<k+1}$, and Lemma 2.1 of \cite{Bendahmane}, we conclude that $\{v_n\}$ is bounded in $V^{r(\cdot)}(Q_T)$ for every $1\leq r(x)<p(x)-\frac{N}{N+1}$ for all $x\in\overline{\Omega}$.
		\end{proof}
	\end{lemma}
		\begin{lemma}\label{lemma2 para}
			Let $\delta^+\geq 1$. Then, $\{v_n\}$ is bounded in $V_{loc}^{r(\cdot)}(Q_T)$ for every $1\leq r(x)<p(x)-\frac{N}{N+1}$ for all $x\in\overline{\Omega}$. Moreover, $\{T_k(v_n)\}$ is bounded in $V_{loc}^{p(\cdot)}(Q_T)$, and $\left\{T_k^{\frac{p^-+\delta^+-1}{p^-}}(v_n)\right\}$ is bounded in $L^{p^-}(0,T;W_{0}^{1,p^-}(\Omega))$ for any $k>0$.
			\begin{proof}
				The proof follows the method used in Lemma $\ref{lemma1 para}$. Let $\varphi\in C_c^1(\Omega)$ be a non-negative function, and let $k>0$ be a fixed constant. We multiply by $\left(T_1(G_k(v_n))-1\right)\varphi^{p^+}$ in $\eqref{approx helping sing}$, and then integrate by parts on $Q_T$ to get
				\begin{align}\label{para2}
					\int_{Q_T}(v_n)_t (T_1(G_k(v_n))-1)\varphi^{p^+}&+\int_{Q_T}\left|\nabla T_1(G_k(v_n))\right|^{p(x)} \varphi^{p^+}\nonumber\\&+p^+\int_{Q_T}|\nabla v_n|^{p(x)-2}\nabla v_n\cdot \nabla\varphi
					\varphi^{p^+-1}(T_1(G_k(v_n))-1)\leq 0.
				\end{align}
				Using Leibniz integral formula, Lemma $\ref{infinity bound}$ and the non-negativity of $\varphi$, the first term in the left hand ride of $\eqref{para2}$ is estimated as follows:
				\begin{align}
					\left|\int_{Q_T}(v_n)_t (T_1(G_k(v_n))-1)\varphi^{p^+}\right|&\leq\left|\int_\Omega\left(\int_{0}^{v_{0,n}}T_{1}(G_k(s))ds-v_{0,n}\right)\varphi^{p^+}(x)\right|\nonumber\\&~~~+\left|\int_{\Omega}\left(\int_{0}^{v_{n}(T)}T_{1}(G_k(s))ds-v_{n}(T)\right)\varphi^{p^+}(x)\right|\nonumber\\&\leq C.\nonumber
				\end{align}
				Now, with the help of Young's inequality and thanks to $\eqref{para2}$, we have	that
				\begin{align}
					\int_{Q_T}|\nabla T_1(G_k(v_n))|^{p(x)}\varphi^{p^+}&\leq C+\tilde{C}\epsilon p^+ \int_{Q_T}|\nabla T_1(G_k(v_n))|^{p(x)}\varphi^{p^+} +C_\epsilon p^+\int_{Q_T}|\nabla\varphi|^{p(x)}.\nonumber
				\end{align}
				This implies
				\begin{equation}
					\int_{Q_T}|\nabla T_1(G_k(v_n))|^{p(x)}\varphi^{p^+}\leq C,\nonumber
				\end{equation} where $C$ is independent of $n$, and then
				\begin{equation}\label{loc k<k+1}
					\int_{Q_T\cap\{k<v_n<k+1\}}|\nabla v_n|^{p(x)}\varphi^{p^+}\leq C.
				\end{equation}
				 By Lemma $\ref{infinity bound}$, $\eqref{loc k<k+1}$, and Lemma 2.1 of \cite{Bendahmane}, we conclude that $\{v_n\}$ is bounded in $V_{loc}^{r(\cdot)}(Q_T)$ for every $1\leq r(x)<p(x)-\frac{N}{N+1}$ for all $x\in\overline{\Omega}$. Repeating the above calculations with the test function $(T_k(v_n)-k)\varphi^{p^+}$ for any $k>0$ we obatin
				 \begin{equation}\label{consider 1}
				 	\int_{Q_T}|\nabla T_k(v_n)|^{p(x)}\varphi^{p^+}\leq Ck.
				 \end{equation}
				Let us consider $T_k^{\delta^+}(v_n)$ as the test function in $\eqref{approx helping sing}$ and thus we have
				\begin{align}
					\int_{Q_T}(v_n)_t T_k^{\delta^+}(v_n)+\delta^+\int_{Q_T}|\nabla T_k(v_n)|^{p(x)}T_k^{\delta^+-1}(v_n)&=2\int_{Q_T}f_nT_k^{\delta^+}(v_n)+\int_{Q_T}\frac{g_nT_k^{\delta^+}(v_n)}{(v_n+1/n)^{\delta(x)}}\nonumber\\&\leq C_4 k^{\delta^+}.\nonumber
				\end{align}
				By further simplification we get 
				$$\int_{Q_T}(v_n)_tT_k^{\delta^+}(v_n)+\delta^+\int_{Q_T}|\nabla T_k(v_n)|^{p^-}T_k^{\delta^+-1}(v_n)-\delta^+\int_{Q_T}T_k^{\delta^+-1}(v_n)\leq C_4 k^{\delta^+},$$
				and thus
				\begin{equation}
					\int_{Q_T}\left|\nabla T_k^{\frac{p^-+\delta^+-1}{p^-}}(v_n)\right|^{p^-}\leq C_5.\nonumber
				\end{equation}
				Hence, the sequence $\left\{T_k^{\frac{p^-+\delta^+-1}{p^-}}(v_n)\right\}$ is bounded in $L^{p^-}(0,T;W_{0}^{1,p^-}(\Omega))$ for every $k>0$.	
			\end{proof}
		\end{lemma}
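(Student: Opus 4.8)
The plan is to run the energy method on the approximating problem \eqref{approx helping sing} exactly as in the proof of Lemma~\ref{lemma1 para}, with the single but essential modification that, since $\delta^+\ge1$ destroys the uniform integrability of $g\,(v_n+1/n)^{-\delta(\cdot)}$ up to $\partial\Omega$, every test function is multiplied by $\varphi^{p^+}$ for a fixed nonnegative cutoff $\varphi\in C_c^1(\Omega)$ and is chosen so as to carry a sign that renders the singular term harmless; this is exactly why only local estimates can be expected.

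First I would fix $k>0$ and test \eqref{approx helping sing} with $(T_k(v_n)-k)\varphi^{p^+}$. Because this function is nonpositive and $g,f_n,\varphi\ge0$, the two terms on the right-hand side are $\le0$ and are simply discarded. For the parabolic term one passes to the primitive $s\mapsto\int_0^s(T_k(\sigma)-k)\,d\sigma$, whose modulus is $\le ks$; the Leibniz rule in $t$ then bounds it, in absolute value, by $Ck$ uniformly in $n$ thanks to the $L^\infty(0,T;L^1(\Omega))$ estimate of Lemma~\ref{infinity bound} and the $L^r$ bound on $v_{0,n}$. In the diffusion term, differentiating $\varphi^{p^+}$ produces $\int_{Q_T}|\nabla T_k(v_n)|^{p(x)}\varphi^{p^+}$ plus a cross term which is supported on $\{v_n<k\}$, where $\nabla v_n=\nabla T_k(v_n)$; Young's inequality with the exponents $p(x),p'(x)$ controls the cross term by $\epsilon\int_{Q_T}|\nabla T_k(v_n)|^{p(x)}\varphi^{p^+}$ plus a term depending only on $\nabla\varphi$ and $k$, so that after absorption
$$\int_{Q_T}|\nabla T_k(v_n)|^{p(x)}\varphi^{p^+}\le Ck.$$
Hence $\{T_k(v_n)\}$ is bounded in $V_{loc}^{p(\cdot)}(Q_T)$. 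Running the same scheme with $(T_1(G_k(v_n))-1)\varphi^{p^+}$ — again nonpositive, so the singular term disappears — yields, after the analogous absorption, the strip estimate $\int_{Q_T\cap\{k<v_n<k+1\}}|\nabla v_n|^{p(x)}\varphi^{p^+}\le C$; combining this, the $L^\infty(0,T;L^1(\Omega))$ bound of Lemma~\ref{infinity bound}, and Lemma~2.1 of \cite{Bendahmane} (the variable-exponent analogue of the Boccardo--Gallou\"et interpolation), one obtains the boundedness of $\{v_n\}$ in $V_{loc}^{r(\cdot)}(Q_T)$ for every $1\le r(x)<p(x)-\frac{N}{N+1}$.

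For the last assertion I would test \eqref{approx helping sing} globally with $T_k^{\delta^+}(v_n)$, which is admissible because $v_n>0$ and $\delta^+\ge1$ make it a bounded Lipschitz function of $v_n$. The parabolic term is again absorbed through the primitive of $T_k^{\delta^+}$ and Lemma~\ref{infinity bound}; on the right, $2\int_{Q_T}f_nT_k^{\delta^+}(v_n)\le Ck^{\delta^+}$, while splitting $\{v_n\le k\}\cup\{v_n>k\}$ and using $\delta^+\ge\delta(x)$ gives $\int_{Q_T}g\,T_k^{\delta^+}(v_n)(v_n+1/n)^{-\delta(x)}\le Ck^{\delta^+}$. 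The surviving inequality reads $\delta^+\int_{Q_T}|\nabla T_k(v_n)|^{p(x)}T_k^{\delta^+-1}(v_n)\le Ck^{\delta^+}$. Using $|\xi|^{p(x)}\ge|\xi|^{p^-}-1$, so that a chain rule with the constant exponent $p^-$ becomes available, and $\nabla T_k^{(p^-+\delta^+-1)/p^-}(v_n)=\frac{p^-+\delta^+-1}{p^-}\,T_k^{(\delta^+-1)/p^-}(v_n)\,\nabla T_k(v_n)$, the left-hand side dominates a positive multiple of $\int_{Q_T}\big|\nabla T_k^{(p^-+\delta^+-1)/p^-}(v_n)\big|^{p^-}$ up to the bounded term $\delta^+\int_{Q_T}T_k^{\delta^+-1}(v_n)\le\delta^+k^{\delta^+-1}|Q_T|$; since $T_k^{(p^-+\delta^+-1)/p^-}(v_n)$ is bounded and vanishes on $\Sigma_T$, this proves $\{T_k^{(p^-+\delta^+-1)/p^-}(v_n)\}$ is bounded in $L^{p^-}(0,T;W_0^{1,p^-}(\Omega))$ for every $k>0$.

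The main obstacle is the localization in the first two steps: the test functions must be sign-definite so that the singular term — which is not uniformly integrable near $\partial\Omega$ when $\delta^+\ge1$ — contributes nothing, while all constants that get absorbed must stay independent of $n$; the cross term generated by $\nabla\varphi^{p^+}$ is exactly what has to be monitored, and its presence is the reason the estimates are only local here, in contrast with Lemma~\ref{lemma1 para}. A secondary, purely technical, point is that in the last step the chain rule producing $T_k^{(p^-+\delta^+-1)/p^-}(v_n)$ only closes after one first lowers the exponent via $|\nabla T_k(v_n)|^{p(x)}\ge|\nabla T_k(v_n)|^{p^-}-1$, since that power is built on the constant $p^-$.
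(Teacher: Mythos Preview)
Your proposal is correct and follows essentially the same approach as the paper: localized nonpositive test functions $(T_k(v_n)-k)\varphi^{p^+}$ and $(T_1(G_k(v_n))-1)\varphi^{p^+}$ to kill the singular term, Young's inequality to absorb the cross term from $\nabla\varphi^{p^+}$, Lemma~2.1 of \cite{Bendahmane} for the $V_{loc}^{r(\cdot)}$ bound, and the global test $T_k^{\delta^+}(v_n)$ together with $|\xi|^{p(x)}\ge|\xi|^{p^-}-1$ for the last assertion. Your ordering (first the $T_k$ estimate, then the strip estimate) is in fact slightly cleaner than the paper's, since the cross term arising from $(T_1(G_k(v_n))-1)\varphi^{p^+}$ on the set $\{v_n<k\}$ involves $\nabla T_k(v_n)$ rather than $\nabla T_1(G_k(v_n))$, and so is more transparently controlled once the $T_k$ bound is already in hand.
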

\noindent The next theorem is the existence result for $\eqref{helping sing}$.
	\begin{proof}[Proof of Theorem $\ref{help sing exist}$]
	The proof follows the lines used in \cite{Oliva para}. According to Lemma $\ref{lemma1 para}$ and Lemma $\ref{lemma2 para}$, the sequence $\{v_n\}$ is bounded in $V^{r(\cdot)}(Q_T)$ if $\delta^+<1$, and is bounded in $V_{loc}^{r(\cdot)}(Q_T)$ if $\delta^+\geq 1$, for every $1\leq r(\cdot)<p(\cdot)-\frac{N}{N+1}$. Therefore, for $\delta^+<1$, there exists a function $v\in V^{r(\cdot)}(Q_T)$ such that, up to a subsequence, $v_n$ converges to $v$ a.e. in $Q_T$, weakly in $V^{r(\cdot)}(Q_T)$. Similarly, for the case $\delta^+\geq 1$, there exists $v\in V_{loc}^{r(\cdot)}(Q_T)$ such that, up to a subsequence, $v_n$ converges to $v$ a.e. in $Q_T$ and weakly in $V_{loc}^{r(\cdot)}(Q_T)$. Since $\{T_k(v_n)\}$ is bounded in $V_{loc}^{p(\cdot)}(Q_T)$ for any $k>0$ it follows that
	\begin{equation}\label{truncation bounded}
	T_k(v_n)\rightarrow T_k(v) ~\text{weakly in}~ V_{loc}^{p(\cdot)}(Q_T). 
	\end{equation}
	We now need to pass the limit $n\rightarrow \infty$ in the weak formulation $\eqref{approx weak form helping}$.\\
	\textbf{Step 1:} Thanks to Lemma $\ref{infinity bound}$, Lemma $\ref{lemma1 para}$ and Lemma $\ref{lemma2 para}$, considering $0<\varphi\in C_c^1(\Omega)$ as a test function in $\eqref{approx helping sing}$, we have that 
	\begin{align}\label{loc bounded}
	2\int_{Q_T}f_n\varphi+\int_{Q_T}\frac{g \varphi}{(v_n+1/n)^{\delta(x)}}&\leq	C\int_{0}^{T}\int_{supp(\phi)}|\nabla v_n|^{p(x)-1}\nonumber\\&\leq C.
	\end{align}
	Clearly, by $\eqref{loc bounded}$, the right hand side of $\eqref{approx helping sing}$ is bounded in $L^1\left(0,T;L^1_{loc}(\Omega)\right)$. Thus, $\left\{\frac{\partial (v_n\varphi)}{\partial t}\right\}$ is a bounded sequence in $L^{s^-}(0,T;W^{-1,s(\cdot)}(\Omega))+L^1(Q_T)$ with $s(\cdot)=\frac{r(\cdot)}{p(\cdot)-1}$ for any $\varphi\in C_c^1(\Omega)$, $\varphi\geq 0$. This allows us to apply Corollary 4 of \cite{Simon} to guarantee that $v_n$ strongly converges to $v$ in $L^1\left(0,T;L_{loc}^1(\Omega)\right)$.\\
	\textbf{Step 2:} Since $(v_n+1/n)^{-\delta(\cdot)}g$ is bounded in $L^1\left(0,T;L^1_{loc}(\Omega)\right)$ by $\eqref{loc bounded}$, using Fatou's Lemma we observe that $v^{-\delta(\cdot)}g\in L^1\left(0,T;L^1_{loc}(\Omega)\right)$. Thus, by following the work of Oliva \& Petitta [Theorem 2.3, \cite{Oliva para}], it can be proved that for any $\gamma>0$
	\begin{equation}\label{singular term}
	\lim\limits_{\gamma\rightarrow 0}\lim\limits_{n\rightarrow\infty}\int_{Q_T\cap\{v_n\leq \gamma\}}\frac{g}{(v_n+1/n)^{\delta(x)}}\varphi=0.
	\end{equation}
To prove $\eqref{singular term}$, let us consider $V_\gamma(v_n)\varphi$ ($V_\gamma$ is defined in $\eqref{second truncation}$) as a test function in $\eqref{approx helping sing}$. By neglecting a negative term we obtain
\begin{align}
\int_{Q_T\cap\{v_n\leq \gamma\}}\frac{g}{(v_n+1/n)^{\delta(x)}}\varphi&\leq -\int_{Q_T}\left(\varphi_t\int_0^{v_n}V_\gamma(s)ds\right)+\int_{Q_T}|\nabla v_n|^{p(x)-1}|\nabla \varphi|V_\gamma(v_n).\nonumber
\end{align}
From $\eqref{consider 2}$ and $\eqref{consider 1}$ we deduce
$$\lim\limits_{n\rightarrow\infty}\int_{Q_T\cap\{v_n\leq \gamma\}}\frac{g}{(v_n+1/n)^{\delta(x)}}\varphi\leq C\max\{\gamma^{\frac{1}{(p')^-}},\gamma^{\frac{1}{(p')^+}}\}.$$
	This implies $\eqref{singular term}$, and
	\begin{equation}\label{rev_qn}
	\lim\limits_{\gamma\rightarrow 0}\lim\limits_{n\rightarrow\infty}\int_{Q_T\cap\{v_n> \gamma\}}\frac{g}{(v_n+1/n)^{\delta(x)}}\varphi=\int_{Q_T\cap\{v>0\}}\frac{g}{v^{\delta(x)}}\varphi=\int_{Q_T}\frac{g}{v^{\delta(x)}}\varphi,
	\end{equation}
	and $$\{(x,t)\in Q_T:v(x,t)=0\}\subset\{(x,t)\in Q_T:g(x,t)=0\}$$
	except for a set of zero measure. The equation \eqref{rev_qn} is obtained by observing that (upto a subsequence) $g(x)\rightarrow g(x)$, $v_n(x)\rightarrow v(x)$ for a.e. $x\in Q_T$. Thus, we have $$\lim\limits_{n\rightarrow\infty}\int_{Q_T}\frac{g}{(v_n+1/n)^{\delta(x)}}\varphi=\int_{Q_T}\frac{g}{v^{\delta(x)}}\varphi,~\forall \varphi\in C_c^1(\Omega\times[0,T)).$$
\textbf{Step 3:} The next step is to show the almost everywhere convergence of the gradients. Let us consider a compact set $\omega\subset Q_T$, and $\phi\in C_c^1(Q_T)$ with $0\leq \phi\leq 1$ in $Q_T$ and $\phi=1$ on $\omega$. By multiplying $T_k(v_n-v_m)\phi^{p^+}$, $k>0,~n,m\in\mathbb{N}$, in $\eqref{approx helping sing}$ relative to both $v_n$, $v_m$ and then integrating by parts over $Q_T$, we have  
	\begin{align}\label{a.e. first part}
	-\int_{Q_T}(\phi^{p^+})_t & T_{k,1}(v_n-v_m)+\int_{Q_T}\left[|\nabla v_n|^{p(x)-2}\nabla v_n-|\nabla v_m|^{p(x)-2}\nabla v_m\right]\cdot \nabla T_k(v_n-v_m)\phi^{p^+}\nonumber\\&~~~~+p^+\int_{Q_T}\left[|\nabla v_n|^{p(x)-2}\nabla v_n-|\nabla v_m|^{p(x)-2}\nabla v_m\right]\cdot \nabla\phi \phi^{p^+-1} T_k(v_n-v_m)\nonumber\\&=\int_{Q_T} 2(f_n-f_m)T_k(v_n-v_m)\phi^{p^+} +\int_{Q_T}\frac{gT_k(v_n-v_m)\phi^{p^+}}{(v_n+1/n)^{\delta(x)}}-\int_{Q_T}\frac{gT_k(v_n-v_m)\phi^{p^+}}{(v_m+1/m)^{\delta(x)}}\nonumber\\&\leq kC\|f\|_{L^1(Q_T)}+k\int_{Q_T\cap\{v_n\leq \gamma\}}\frac{g\phi^{p^+}}{(v_n+1/n)^{\delta(x)}}+k\int_{Q_T\cap\{v_m\leq \gamma\}}\frac{g\phi^{p^+}}{(v_m+1/m)^{\delta(x)}}\nonumber\\&~~~+\gamma^{-\delta^+}\|g\|_{L^\infty(\Omega)}\left(\int_{Q_T\cap\{v_n>\gamma\}}|T_k(v_n-v_m)|\phi^{p^+}+\int_{Q_T\cap\{v_m>\gamma\}}|T_k(v_n-v_m)|\phi^{p^+}\right),
	\end{align}
where $\gamma\in (0,1)$ and $T_{k,1}$ is the primitive of $T_k$ defined in $\eqref{truncation}$. Since $\{v_n\}$ is uniformly bounded in $V_{loc}^{r(\cdot)}(Q_T)$ for any $r(\cdot)<p(\cdot)-\frac{N}{N+1}$, we obtain
\begin{align}\label{a.e. fourth}
\int_{Q_T}\left|\left[|\nabla v_n|^{p(x)-2}\nabla v_n-|\nabla v_m|^{p(x)-2}\nabla v_m\right]\right|&\cdot  |\nabla\phi| \phi^{p^+-1}  |T_k(v_n-v_m)|\nonumber\\&\leq Ck\left(\int_{Q_T}\left[|\nabla v_n|^{p(x)-1}+|\nabla v_m|^{p(x)-1}\right]\phi^{p^+}\right)\nonumber\\&\leq Ck.
\end{align}
	We use the notation $w(n,m,\gamma)$ for all quanities such that 
	\begin{equation}\label{w}
	\lim\limits_{\gamma\rightarrow0}\lim\limits_{m\rightarrow\infty}\lim\limits_{n\rightarrow\infty}w(n,m,\gamma)=0.
	\end{equation} 
Using $\eqref{singular term}$, $\eqref{a.e. fourth}$, and the facts that $T_k(v_n-v_m)\rightarrow 0$ weakly in $V_{loc}^{p(\cdot)}(Q_T)$, $v_n-v_m\rightarrow 0$ strongly in $L^1(0,T;L^1_{loc}(\Omega))$, we obatin the following.
\begin{equation}
\int_{\omega}\left[|\nabla v_n|^{p(x)-2}\nabla v_n-|\nabla v_m|^{p(x)-2}\nabla v_m\right]\cdot \nabla T_k(v_n-v_m)\leq Ck+ w(n,m,\gamma).\nonumber
\end{equation}
Using simple calculations, for any fixed $0<\theta<1$, we have
\begin{align}\label{a.e. fifth}
\lim\limits_{\gamma\rightarrow 0}&\lim\limits_{n,m\rightarrow\infty}
\int_\omega|\nabla v_n- \nabla v_m|^{\theta p(x)}\nonumber\\&\leq \lim\limits_{\gamma\rightarrow 0}\lim\limits_{n,m\rightarrow\infty}\left(\int_{\omega\cap\{|v_n-v_m|\leq k\}}\left[|\nabla v_n|^{p(x)-2}\nabla v_n-|\nabla v_m|^{p(x)-2}\nabla v_m\right]\cdot(\nabla v_n-\nabla v_m)\right)^\theta|\omega|^{1-\theta}\nonumber\\&\leq (Ck)^{\theta}|\omega|^{1-\theta},
\end{align}
when $p(\cdot)\geq2$. For the case $p(\cdot)<2$, using H\"{o}lder's inequality we obtain $\eqref{a.e. fifth}$.  Letting $k\rightarrow 0$ in $\eqref{a.e. fifth}$ implies that $\nabla v_n$ is cauchy in $\left(L^{\theta{{p}}(\cdot)}(\omega)\right)^N$, and thus $\nabla v_n$ converges strongly to $\nabla v$  in $\left(L_{loc}^{{\tilde{p}}(\cdot)}(Q_T)\right)^N$ for any $\tilde{p}(\cdot)\leq p(\cdot)$. Then, in a standard way it follows that	$\nabla v_n\rightarrow \nabla v~\text{a.e in}~Q_T$.\\
		\textbf{Step 4:} The weak formulation of $\eqref{approx helping sing}$ is given by
		\begin{equation}\label{k}
		 -\int_{Q_T}v_n\varphi_t-\int_{\Omega}v_{0,n}\varphi(x,0)+\int_{Q_T}|\nabla v_n|^{p(x)-2}\nabla v_n\cdot \nabla\varphi=2\int_{Q_T}f_n\varphi+\int_{Q_T}\frac{g\varphi}{(v_n+1/n)^{\delta(x)}}
		\end{equation}
		for every $\varphi\in C_c^1(\Omega\times[0,T))$. By the above arguments we can pass the limit $n\rightarrow\infty$ in the left hand side terms of $\eqref{k}$.\\
		Therefore, after passing the limit, we conclude that $v$ is a weak solution to $\eqref{helping sing}$ in the sense of Definition $\ref{weak main para}$ and satisfies the following equation.
			\begin{equation}
			-\int_{Q_T}v\varphi_t-\int_{\Omega}v_{0}\varphi(x,0)+\int_{Q_T}|\nabla v|^{p(x)-2}\nabla v\cdot \nabla\varphi=2\int_{Q_T}f\varphi+\int_{Q_T}\frac{g\varphi}{v^{\delta(x)}}.
			\end{equation}
\textbf{Step 5:} The final step is to show that $tr~v(\cdot, t)=0$ in the sense of Definition $\ref{trace}$ for almost every $t\in (0,t)$.\\
According to Lemma $\ref{lemma2 para}$, for $\delta^+\geq1$, the sequence $\left\{T_k^{\frac{p^-+\delta^+-1}{p^-}}(v_n)\right\}$ is uniformly bounded in $L^{p^-}(0,T;W_{0}^{1,p^-}(\Omega))$. Thus, $T_k^{\frac{p^-+\delta^+-1}{p^-}}(v(\cdot,t))\in W_{0}^{1,p^-}(\Omega)$ for almost every $t\in (0,T)$ and for every  $k>0$. Let us consider a $C^2$ exhaustion $\{\Omega_m\}$ of $\Omega$, and let $f\in C(\overline{\Omega})$. Then, we establish
\begin{align}\label{trace 1}
\int_{\Omega_m}T_k(v)f&\leq C\left(\int_{\Omega_m}[T_k(v)f]^{\frac{p^-+\delta^+-1}{p^-}}\right)^{\frac{p^-}{p^-+\delta^+-1}}\left|\Omega_m\right|^{\frac{\delta^+-1}{\delta^+-1+p^-}},~C>0,
\end{align}
and, by Definition $\ref{trace}$,  
\begin{equation}\label{trace 2}
\lim\limits_{m\rightarrow \infty} \int_{\partial\Omega_m}T_k^{\frac{p^-+\delta^+-1}{p^-}}(v)\lfloor_{\partial \Omega_m}f^{\frac{p^-+\delta^+-1}{p^-}}~ dS=0,~\forall f\in C(\overline{\Omega}).
\end{equation}
 By $\eqref{trace 1}$ and $\eqref{trace 2}$, it is now easy to prove that $tr~v(\cdot,t)=0$ for almost every $t\in (0,T)$ in the sense of Definition $\ref{trace}$. If $\delta^+<1$, then $T_k(v(\cdot,t))\in W_0^{1,p(\cdot)}(\Omega)$ for almost every $t\in (0,T)$, and thus the Sobolev trace of $v$ is same as $tr~v=0$.
				\end{proof}
	\section{Proof of Theorem $\ref{main result 1}$}\label{section 4}
	In this section, we establish the existence of weak solution to $\eqref{main para}$, i.e. we prove Theorem $\ref{main result 1}$, through approximation. We follow the approach used in Section $\ref{section 2}$. The preliminary step is the construction of the following approximation scheme. For a fixed $n\in \mathbb{N}$, consider a sequence $\{u_{n,j}\}_j$ that verifies:
	 	\begin{enumerate}
	 		\item $u_{n,0}=0$ and
	 		\item $u_{n,j+1}$ is a non-negative weak solution to the problem
	 		\begin{align}\label{approx sing help}
	 		\frac{\partial}{\partial t}u_{n,j+1}-\Delta_{p(x)}u_{n,j+1}&=\lambda h_n(u_{n,j}) +
	 		\frac{ g}{(u_{n,j+1}+\frac{1}{n})^{\delta(x)}}+ f_{n}&&\text{in}~Q_T,\nonumber\\
	 		u_{n,j+1}&= 0&&\text{on}~\Sigma_T,\\
	 		u_{n,j+1}(0,\cdot)&=u_{0,n}(\cdot)&&\text{in}~\Omega.\nonumber
	 		\end{align}
	 	\end{enumerate}
	 	\begin{lemma}\label{comp 1}
	 	Let $\lambda,\delta^+<1$ and let $f\in L^1(Q_T)$ verifies $f\geq v^{q(\cdot)-1}$ a.e. in $Q_T$, where $v$ is a weak solution to $\eqref{helping sing}$ obtained in Theorem $\ref{help sing exist}$. Then the sequence $\{u_{n,j}\}$ is increasing in $j$ and $u_{n,j}\leq v$ a.e. in $Q_T$ for every $j$.
	 	\end{lemma}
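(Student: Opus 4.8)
The plan is to prove the two assertions of Lemma \ref{comp 1} — monotonicity in $j$ and domination by $v$ — by induction on $j$, using the weak comparison principle (Theorem \ref{comparison}) as the basic comparison tool and the structure of the iteration scheme \eqref{approx sing help}.

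\medskip

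\textbf{Step 1: The base case and a preliminary comparison.} First I would record that $u_{n,0}=0\le u_{n,1}$, which handles the start of the monotonicity induction trivially since $u_{n,1}\ge 0$. For the domination, I would compare $u_{n,1}$ with $v$. Here $v$ solves \eqref{helping sing}, whose right-hand side is $v^{-\delta(x)}g+2f$, while $u_{n,1}$ solves \eqref{approx sing help} with $j=0$, i.e. with right-hand side $\lambda h_n(0)+ (u_{n,1}+1/n)^{-\delta(x)}g+f_n = (u_{n,1}+1/n)^{-\delta(x)}g + f_n$ (using $h_n(0)=T_n(0)=0$). Since $f_n\to f$ in $L^1$ but we need a pointwise bound, I would take $f_n$ in the approximation chosen so that $f_n\le 2f$ (a standard truncation $f_n=T_n(f)\le f\le 2f$ works, recalling $f\ge 0$), and I would also want $v$ itself to be compared against the regularized singular term: since $v\ge 0$, one has $v^{-\delta(x)}g \ge (v+1/n)^{-\delta(x)}g$ only after we have $u_{n,1}\le v$, so the comparison must be set up as $\partial_t u_{n,1}-\Delta_{p(x)}u_{n,1}-(u_{n,1}+1/n)^{-\delta(x)}g \le \partial_t v-\Delta_{p(x)}v-(v+1/n)^{-\delta(x)}g$, which follows because the left side equals $f_n\le 2f$ and the right side equals $2f + \big(v^{-\delta(x)}-(v+1/n)^{-\delta(x)}\big)g \ge 2f$. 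With $u_{0,n}=T_n(u_0)\le u_0\le v_0$ and zero boundary data, Theorem \ref{comparison} (applied with $c=1/n$ and with the factor $g\in L^\infty$, $g\ge 0$ absorbed harmlessly into the monotone singular term) gives $u_{n,1}\le v$ a.e.

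\medskip

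\textbf{Step 2: The inductive step.} Assume $u_{n,j-1}\le u_{n,j}\le v$ a.e. in $Q_T$. To show $u_{n,j}\le u_{n,j+1}$: subtract the equations for $u_{n,j}$ and $u_{n,j+1}$. The source term for $u_{n,j+1}$ contains $\lambda h_n(u_{n,j})$, while that for $u_{n,j}$ contains $\lambda h_n(u_{n,j-1})$; since $s\mapsto h_n(s)=T_n(s^{q(x)-1})$ is nondecreasing in $s\ge 0$ and $u_{n,j-1}\le u_{n,j}$, we get $h_n(u_{n,j-1})\le h_n(u_{n,j})$. The singular terms are of the comparison-principle type, so again the inequality $\partial_t u_{n,j}-\Delta_{p(x)}u_{n,j}-(u_{n,j}+1/n)^{-\delta(x)}g \le \partial_t u_{n,j+1}-\Delta_{p(x)}u_{n,j+1}-(u_{n,j+1}+1/n)^{-\delta(x)}g$ holds weakly, same initial and boundary data, and Theorem \ref{comparison} yields $u_{n,j}\le u_{n,j+1}$. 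To show $u_{n,j+1}\le v$: the source for $u_{n,j+1}$ is $\lambda h_n(u_{n,j})+(u_{n,j+1}+1/n)^{-\delta(x)}g+f_n$; using $\lambda\le 1$, $h_n(u_{n,j})\le u_{n,j}^{q(x)-1}\le v^{q(x)-1}\le f$ (the inductive hypothesis $u_{n,j}\le v$ together with the standing assumption $f\ge v^{q(\cdot)-1}$), and $f_n\le f$, the total source is $\le f + (u_{n,j+1}+1/n)^{-\delta(x)}g + f = 2f + (u_{n,j+1}+1/n)^{-\delta(x)}g$. Matching this against $v$'s source $2f+v^{-\delta(x)}g\ge 2f+(v+1/n)^{-\delta(x)}g$ and invoking Theorem \ref{comparison} once more gives $u_{n,j+1}\le v$, closing the induction.

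\medskip

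\textbf{Main obstacle.} The delicate point is justifying that Theorem \ref{comparison} actually applies at each step: the comparison principle as stated requires $u,v\in V^{p(\cdot)}(Q_T)$ with time derivatives in the dual space, and requires the pure singular term $(u+c)^{-\delta(x)}$ without the weight $g$. One must check that the iterates $u_{n,j}$ and the limit solution $v$ have this regularity — for the iterates this comes from Lemma \ref{exist approx sing} (each \eqref{approx sing help} has an $L^\infty$-bounded right-hand side once $u_{n,j}$ is known, so $u_{n,j+1}\in V^{p(\cdot)}(Q_T)\cap L^\infty(Q_T)$ with $\partial_t u_{n,j+1}\in V^{p(\cdot)}(Q_T)^*$), and for $v$ one needs that the particular solution produced in Theorem \ref{help sing exist} can be taken in this class, or else one runs the comparison against the approximants $v_n$ of \eqref{approx helping sing} (which do lie in the right space by Lemma \ref{exist approx sing}) and passes to the limit. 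The weight $g\in L^\infty$, $g\ge 0$ is handled by noting that $s\mapsto -(s+c)^{-\delta(x)}g(x)$ is still nondecreasing in $s$, so the proof of Theorem \ref{comparison} — testing with $(u-v)^+$ and using monotonicity of both $-\Delta_{p(x)}$ and the zeroth-order term — goes through verbatim with $g$ present. I would state this as a remark rather than reprove it. The rest is the bookkeeping of the three one-line comparisons above, organized as a single induction on $j$ with the two inequalities proved simultaneously.
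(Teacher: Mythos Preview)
Your inductive strategy and the chain of comparisons are exactly what the paper does, and you have correctly isolated the one real issue: the comparison principle (Theorem \ref{comparison}) is stated for functions in $V^{p(\cdot)}(Q_T)$, but the solution $v$ of \eqref{helping sing} produced by Theorem \ref{help sing exist} is only known to lie in $V^{r(\cdot)}(Q_T)$ for $r(\cdot)<p(\cdot)-\frac{N}{N+1}$ (with $T_k(v)\in V^{p(\cdot)}(Q_T)$). So your first proposed fix --- ``take $v$ in this class'' --- is not available.

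Your second fix (compare against the approximants $v_n$ and pass to the limit) can be made to work, but the paper's device is more direct and worth knowing: since each $u_{n,j}\in L^\infty(Q_T)$ by Lemma \ref{exist approx sing}, one chooses $k\gg\max\{\|u_{0,n}\|_{L^\infty(\Omega)},\|u_{n,j}\|_{L^\infty(Q_T)}\}$ and tests with $(u_{n,j}-T_k(v))^+$. This test function lies in $V^{p(\cdot)}(Q_T)\cap L^\infty(Q_T)$ because $T_k(v)\in V^{p(\cdot)}(Q_T)$, and it vanishes identically on $\{v>\|u_{n,j}\|_{L^\infty}\}$, so the comparison effectively only sees the region where $T_k(v)=v$. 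The usual monotonicity argument then gives $u_{n,j}\le T_k(v)\le v$. This sidesteps both the regularity defect of $v$ and any limit passage in $n$; otherwise your proof and the paper's coincide.
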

 	\begin{proof}
 	Clearly, by using the comparison principle Theorem $\ref{comparison}$, the sequence $\{u_{n,j}\}$ is increasing in $j$. From $\eqref{helping sing}$ and $\eqref{approx sing help}$ with $j=0$ we have
 		\begin{align}\label{comp}
 		\frac{\partial}{\partial t}u_{n,1}-\Delta_{p(x)}u_{n,1}-\frac{ g}{(u_{n,1}+\frac{1}{n})^{\delta(x)}}&\leq 2f\nonumber\\&=\frac{\partial }{\partial t}v-\Delta_{p(x)}v- \frac{g}{v^{\delta(x)}}\nonumber\\&\leq \frac{\partial }{\partial t}v-\Delta_{p(x)}v- \frac{ g}{(v+\frac{1}{n})^{\delta(x)}}.
 		\end{align}
  Let us define $\varphi_1=(u_{n,1}-T_k(v))^+$ with $k\gg\max\{\|u_{0,n}\|_{L^\infty(\Omega)},\|u_{n,1}\|_{L^{\infty}(Q_T)}\}$. This implies $\varphi_1\in V^{p(\cdot)}(Q_T)\cap L^\infty(Q_T)$ and $\varphi_1\equiv 0$ in the set $\{v>\|u_{n,1}\|_{L^{\infty}(\Omega)}\}$. Considering $\varphi_1 \chi_{\{0,t\}}$ for any $t\in (0,T]$ as a test function in $\eqref{comp}$ we get
  \begin{align}\label{comp special}
  \int_{Q_t}(u_{n,1}-T_k(v))_t&(u_{n,1}-T_k(v))^+\nonumber
  \\&+\int_{Q_t}\left(|\nabla u_{n,1}|^{p(x)-2}\nabla u_{n,1}-|\nabla T_k(v)|^{p(x)-2}\nabla T_k(v)\right)\cdot \nabla(u_{n,1}-T_k(v))^+ \leq 0.
  \end{align}
 Using the fact that $$\left(|\nabla u_{n,1}|^{p(x)-2}\nabla u_{n,1}-|\nabla T_k(v)|^{p(x)-2}\nabla T_k(v)\right)\cdot \nabla(u_{n,1}-T_k(v))^+\geq 0,$$
it follows from $\eqref{comp special}$ that
 \begin{align} 
 \frac{1}{2}\int_{Q_t}\frac{d}{dt}[(u_{n,1}-T_k(v))^+]^2\leq 0.\nonumber
 \end{align}
 		Since $u_{0,n}\leq T_k(v_0)$ a.e. in $\Omega$, we obtain $u_{n,1}\leq T_k(v)$ and hence $u_{n,1}\leq v$ a.e. in $Q_T$. \\
 		Let us assume $u_{n,j-1}\leq v$ a.e. in $Q_T$. Then, we have 
 \begin{align*}
 \frac{\partial}{\partial t}u_{n,j}-\Delta_{p(x)}u_{n,j}-\frac{ g}{(u_{n,j}+\frac{1}{n})^{\delta(x)}}&\leq \lambda h_n(u_{n,j-1})+ f_n\\&\leq u_{n,j-1}^{q(x)-1} +f_n\\&\leq v^{q(x)-1}+f\\&\leq 2f\nonumber\\&\leq \frac{\partial }{\partial t}v-\Delta_{p(x)}v- \frac{ g}{(v+\frac{1}{n})^{\delta(x)}}.
 \end{align*}
 		Thus, proceeding similarly we obtain that $u_{n,j}\leq v$ a.e. in $Q_T$ for every $j$.	
 	\end{proof}
 With the consideration of Lemma $\ref{comp 1}$, it is easy to prove that $u_{n,j}\uparrow u_n$ is a weak solution to problem $\eqref{approx}$ with $\beta=1$, $w_0=u_0$, and thus $u_n\leq v$ a.e. in $Q_T$. More precisely, $u_n$ satisfies:
  \begin{align}\label{approx sing}
 \frac{\partial }{\partial t}u_n-\Delta_{p(x)}u_n&=\lambda h_n(u_n)+{(u_n+1/n)^{-\delta(x)}}g+ f_n&&\text{in}~Q_T,\nonumber\\
 u_n&> 0&&\text{in}~Q_T,\nonumber\\
 u_n&= 0&&\text{on}~\Sigma_T,\\
 u_n(0,\cdot)&=u_{0,n}(\cdot)&&\text{in}~\Omega.\nonumber
 \end{align}
 According to Remark $\ref{remark L1}$, and the choice of $f$, we have $u_n^{q(\cdot)-1}\leq v^{q(\cdot)-1}\leq f\in L^{1}(Q_T)$ a.e. in $Q_T$. This proves the following result.
\begin{lemma}\label{lemma q(x)}
Let $\lambda,\delta^+< 1$ and let $f\in L^1(Q_T)$ verifies $f\geq v^{q(\cdot)-1}$ a.e. in $Q_T$. Further, let $u_n$ be a weak solution to the approximating  problem $\eqref{approx sing}$. Then, the sequence $\{u_n\}$ is uniformly bounded in $L^{q(\cdot)-1}(Q_T)$.
\end{lemma}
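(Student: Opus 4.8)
The plan is to read the claim off directly from the pointwise comparison already established before the lemma. First I would recall that, by construction $u_{n,0}=0$ and, by Lemma~\ref{comp 1}, the sequence $\{u_{n,j}\}_j$ is nondecreasing in $j$ with $0\le u_{n,j}\le v$ a.e. in $Q_T$ for every $j$, where $v$ is the fixed weak solution to \eqref{helping sing} furnished by Theorem~\ref{help sing exist}. Passing to the monotone limit $u_{n,j}\uparrow u_n$ (the solution of \eqref{approx sing}) preserves both the sign and the upper bound, so that
\[
0\le u_n\le v\quad\text{a.e. in }Q_T,\qquad\text{for every }n\in\mathbb{N}.
\]

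Next, since $q\in C_+(\overline{\Omega})$ we have $q(x)-1>0$ for all $x\in\overline{\Omega}$, hence $s\mapsto s^{q(x)-1}$ is nondecreasing on $[0,\infty)$; combined with the previous display this yields $u_n^{q(x)-1}\le v^{q(x)-1}$ a.e. in $Q_T$. Using the standing hypothesis $f\ge v^{q(\cdot)-1}$ a.e. in $Q_T$ (consistently, by Remark~\ref{remark L1}(2), one already knows $v\in V^{q^+-1}(Q_T)\cap L^{q(\cdot)-1}(Q_T)$, so $v^{q(\cdot)-1}\in L^1(Q_T)$), we obtain
\[
\int_{Q_T}u_n^{q(x)-1}\,dx\,dt\le\int_{Q_T}v^{q(x)-1}\,dx\,dt\le\int_{Q_T}f\,dx\,dt=\|f\|_{L^1(Q_T)}
\]
for every $n$. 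Thus the modular of $u_n$ associated with the exponent $q(\cdot)-1$ is bounded, uniformly in $n$, by $\|f\|_{L^1(Q_T)}$. Finally I would invoke the modular--norm relations recalled in Section~\ref{section 1}: whenever $\|u_n\|_{L^{q(\cdot)-1}(Q_T)}>1$ one has $\|u_n\|_{L^{q(\cdot)-1}(Q_T)}^{\,q^--1}\le\int_{Q_T}u_n^{q(x)-1}\,dx\,dt$, whence $\{u_n\}$ is uniformly bounded in $L^{q(\cdot)-1}(Q_T)$.

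There is no genuine obstacle in this argument; it is essentially a corollary of the comparison in Lemma~\ref{comp 1}. The only two points deserving a word of care are that the bound $u_n\le v$ must survive the passage to the limit in $j$ — which it does, being uniform in $j$ and compatible with monotone convergence — and that the exponent $q(\cdot)-1$ need not be bounded below by $1$, so that $L^{q(\cdot)-1}(Q_T)$ is a priori only a quasi-normed space; however, the modular--norm comparison used above only requires $q^--1>0$, which holds since $q\in C_+(\overline{\Omega})$, so the conclusion is unaffected.
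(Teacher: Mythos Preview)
Your proposal is correct and follows essentially the same route as the paper: the paper's proof (given in the sentence immediately preceding the lemma) also deduces $u_n\le v$ from Lemma~\ref{comp 1} and the monotone limit, then uses $u_n^{q(\cdot)-1}\le v^{q(\cdot)-1}\le f\in L^1(Q_T)$ together with Remark~\ref{remark L1}(2) to conclude. Your version simply spells out the modular--norm step and the caveat about $q(\cdot)-1$ possibly falling below $1$, which the paper leaves implicit.
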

	\begin{proof}[Proof of Theorem $\ref{main result 1}$]
Let $u_n$ be a non-negative weak solution to $\eqref{approx sing}$. Then, from Lemma $\ref{lemma q(x)}$, the sequence $\{u_n\}$ is uniformly bounded in $L^{q(\cdot)-1}(Q_T)$ and $u_n^{q(\cdot)-1}\leq f$ a.e. in $Q_T$. Thus, using the generalized Lebesgue dominated convergence theorem, $u_n^{q(\cdot)-1}\rightarrow u^{q(\cdot)-1}$ strongly in $L^1(Q_T)$.\\
 By readapting the methods used in Lemma $\ref{infinity bound}$ and Lemma $\ref{lemma1 para}$, we prove $\{u_n\}$ to be bounded in $L^\infty\left(0,T;L^1(\Omega)\right)\cap V^{r(\cdot)}(Q_T)$, for every $1\leq r(\cdot)<p(\cdot)-\frac{N}{N+1}$. Moreover, for every $k>0$, $\{T_k(u_n)\}$ is bounded in $V^{p(\cdot)}(Q_T)$. Therefore, there exists a function $u$ such that, up to a subsequence, $u_n$ converges to $u$ a.e. in $Q_T$ and weakly in $V^{r(\cdot)}(Q_T)$. \\
Following the lines from step 1 and step 2 of Theorem $\ref{theorem 1}$, we establish that $u_n\rightarrow u$ strongly in $L^1\left(0, T;L^1_{loc}(\Omega)\right)$, $T_k(u_n)\rightarrow T_k(u)$ weakly in $V^{p(\cdot)}(Q_T)$, and 
	$$\lim\limits_{n\rightarrow\infty}\int_{Q_T}\frac{g}{(u_n+1/n)^{\delta(x)}}\varphi=\int_{Q_T}\frac{g}{u^{\delta(x)}}\varphi,~\forall \varphi\in C_c^1(\Omega\times[0,T)).$$ 
	Furthermore, replicating the proof of step 3 implies that $\{\nabla u_n\}$ converges almost everywhere to $\nabla u$ in $Q_T$, and $\nabla u_n\rightarrow \nabla u$  in $\left(L_{loc}^{{\tilde{p}}(\cdot)}(Q_T)\right)^N$ for any $\tilde{p}(\cdot)\leq p(\cdot)$. Indeed, for any $k>0,~n.m\in \mathbb{N}$, using the boundedness of the sequence $\left\{u_n^{q(\cdot)-1}\right\}$ in $L^1(Q_T)$ implies
\begin{equation}
\int_{Q_T}\left||u_n|^{q(x)-1}-|u_m|^{q(x)-1}\right|T_k(u_n-u_m)\phi^{p^+}\leq k \int_{Q_T}\left||u_n|^{q(x)-1}-|u_m|^{q(x)-1}\right|\phi^{p^+}\leq Ck.
\end{equation}
We are now in the position to pass the limit $n\rightarrow \infty$ in the weak formulation of $\eqref{approx sing}$, i.e in 
	\begin{align}
	-\int_{Q_T}u_{n}\varphi_t-\int_{\Omega}u_{0,n}\varphi(x,0)+\int_{Q_T}|\nabla u_{n}|^{p(x)-2}\nabla u_{n}\cdot \nabla\varphi&=\int_{Q_T}\lambda h_n(u_{n})\varphi+\int_{Q_T}\frac{g\varphi}{(u_{n}+\frac{1}{n})^{\delta(x)}}\nonumber\\&~~~~+\int_{Q_T} f_{n}\varphi\nonumber
	\end{align}
	for every $\varphi\in C_c^1(\Omega\times [0,T))$. Then, $u$ is a non-negative weak solution to $\eqref{main para}$, in the sense of Definition $\ref{weak main para}$, and $u$ satisfies
	\begin{align}
	-\int_{Q_T}u\varphi_t-\int_{\Omega}u_{0}\varphi(x,0)+&\int_{Q_T}|\nabla u|^{p(x)-2}\nabla u\cdot \nabla\varphi=\int_{Q_T}(\lambda u^{q(x)-1}+f)\varphi+\int_{Q_T}\frac{g\varphi}{u^{\delta(x)}}\nonumber
	\end{align} 
	for every $\varphi\in C_c^1(\Omega\times [0,T))$. This completes the proof.
	\end{proof}
	\section{Proof of Theorem $\ref{main result 2}$}\label{section 5}
	In this section, we discuss the problem $\eqref{main para 2}$, and prove our second main result, i.e. Theorem $\ref{main result 2}$. For this purpose, let us consider $u_n$ to be a weak solution to $\eqref{approx}$ with $\beta=0$ and $w_0=u_0$, i.e. $u_n$ satisfies:
	 	\begin{align}\label{approx main para 2}
	 	\frac{\partial u_n}{\partial t}-\Delta_{p(x)}u_n&=\lambda h_n(u_n) + (u_n+1/n)^{-\delta(x)}{g}&&\text{in}~Q_T,\nonumber\\
	 	u_n&> 0&&\text{in}~Q_T,\nonumber\\
	 	u_n&= 0&&\text{on}~\Sigma_T,\\
	 	u_n(0,\cdot)&=u_{0,n}(\cdot)&&\text{in}~\Omega.\nonumber
	 	\end{align}
	 	Here, $\lambda>0$, $u_{0,n}=T_n(u_0)$, $u_0\in L^r(\Omega)$ where $r$ satisfies $(B4)$, and the functions $p(\cdot),~q(\cdot)$ satisfy the hypotheses $(B1)-(B3)$.

\subsection{A priori estimates for $u_n$}\label{section 5.2}
To obtain a weak solution to the problem $\eqref{main para 2}$, we have to pass the limit $n\rightarrow\infty$ in the weak formulation of its approximating problem $\eqref{approx main para 2}$. Thus, it is essential to prove the following proposition which provides a uniform $L^\infty$ bound for $u_n$ independent of $n$.
\begin{proposition}\label{l infinity}
Let $u_n$ be a weak solution to $\eqref{approx main para 2}$. Then, there exists $\overline{T}>0$ such that for every $T<\overline{T}$, $u_n$ verifies the following properties:
\begin{enumerate}
	\item There exists $K_1>0$ such that $\|u_n(t)\|_{L^r(\Omega)}\leq K_1<\infty$ independent of $n$, for every $t\in (0,T)$.
	\item For every $\eta\in (0,T)$, there exists $K_\eta>0$ such that $\|u_n(t)\|_{L^\infty(\Omega)}\leq K_\eta<\infty$ independent of $n$, for every $t\in (\eta,T)$. 
\end{enumerate}	
\begin{proof}
	The proof follows the similar argument as in Proposition 4.2 of \cite{Bougherara}. For a fixed $\rho> \max\{\delta^--1,0\}$, let us multiply $u_n^{\rho+1}$ in $\eqref{approx main para 2}$ and then integrate by parts on $\Omega$ to obtain
	\begin{align}\label{first one}
	\frac{1}{2+\rho}\frac{d}{dt}\int_\Omega |u_n(t)|^{2+\rho}dx&+(1+\rho)\int_{\Omega}|\nabla u_n(t)|^{p(x)}|u_n(t)|^{\rho}\nonumber\\&=\lambda\int_{\Omega}h_n(u_n(t))u_n(t)^{1+\rho}+\int_{\Omega}{g}(u_n+1/n)^{-\delta(x)}u_n(t)^{1+\rho}\nonumber\\&\leq \lambda \int_{\Omega}u_n^{q(x)+\rho}+\int_{\Omega}{g}u_n^{1+\rho-\delta(x)}\nonumber\\&\leq \lambda\left(\int_{\Omega}u_n^{q^++\rho}+\int_{\Omega}u_n^{\rho}\right) +\int_{\Omega}{g}u_n^{1+\rho-\delta^-}+\int_{\Omega}{g}u_n^{1+\rho}\nonumber\\&\leq \lambda \int_{\Omega}u_n^{q^++\rho}+C_1\|u_n\|_{L^{2+\rho}(\Omega)}^{\rho}+C_2\|u_n\|_{L^{2+\rho}(\Omega)}^{1+\rho-\delta^-}+C_3\|u_n\|_{L^{2+\rho}(\Omega)}^{1+\rho}.
	\end{align} 
We have used the inequality $\theta^{\alpha}\leq \theta^\beta+1$ for any $\theta>0$ and $\alpha\leq \beta$ to obtain $\eqref{first one}$. It is known that $u_n\in V^{p(\cdot)}(Q_T)\cap L^\infty(Q_T)$. Thus, $u_n(t)\in W_0^{1,p^-}(\Omega)\cap L^\infty(\Omega)$ for almost every $t\in (0,T)$. From $(B3)$ we have $q^+<p^-(1+\frac{r}{N})$, where $r$ satisfies $(B4)$. Choose $\rho_0=r-2$. Thus, by applying Lemma 5.1 of \cite{Takac} for $p^-,q^+,r$, we establish the following Gagliardo - Nirenberg type estimate for every $\rho\geq \rho_0$.  
\begin{align}\label{G-N}
\lambda \int_{\Omega}u_n^{q^++\rho}&\leq \frac{1+\rho}{4}\int_{\Omega}|\nabla u_n|^{p^-}u_n^\rho+C_4\|u_n\|_{L^{2+\rho}(\Omega)}^{q^++\rho+(q^+-p^-)E(\rho)}\nonumber\\&\leq \frac{1+\rho}{4}\left(\int_{\Omega}|\nabla u_n|^{p(x)}u_n^\rho+\int_{\Omega}u_n^\rho\right)+C_4\|u_n\|_{L^{2+\rho}(\Omega)}^{q^++\rho+(q^+-p^-)E(\rho)}\nonumber\\&\leq \frac{1+\rho}{4}\int_{\Omega}|\nabla u_n|^{p(x)}u_n^\rho+C_5\|u_n\|_{L^{2+\rho}(\Omega)}^{\rho}+C_4\|u_n\|_{L^{2+\rho}(\Omega)}^{q^++\rho+(q^+-p^-)E(\rho)}
\end{align}
where $E(\rho)=\frac{q^+-2}{p^-\left(1+\frac{2+\rho}{N}\right)-q^+}$. Let us denote $B(\rho)=\frac{q^++\rho+(q^+-p^-)E(\rho)}{2+\rho}\geq 1$. On combining $\eqref{first one}$ and $\eqref{G-N}$ we have \begin{align}\label{same inq}
	\frac{1}{2+\rho}\frac{d}{dt}\|u_n(t)\|_{L^{2+\rho}(\Omega)}^{2+\rho}&\leq C_2\|u_n\|_{L^{2+\rho}(\Omega)}^{1+\rho-\delta^-}+C_3\|u_n\|_{L^{2+\rho}(\Omega)}^{1+\rho}+ C_6\|u_n\|_{L^{2+\rho}(\Omega)}^{\rho}+C_4\|u_n\|_{L^{2+\rho}(\Omega)}^{(2+\rho)B(\rho)}\nonumber\\&\leq C_7 \|u_n(t)\|_{L^{2+\rho}(\Omega)}^{(2+\rho)B(\rho)}+C_8.
\end{align}
The above inequality $\eqref{same inq}$ is a similar type of differential inequality as given in equation (4.13) of \cite{Bougherara}. Thus, using the same approach as used in the proof of Proposition 4.2 of \cite{Bougherara}, we can prove the first property (1).\\
It is left to prove (2). Let us consider a $C^1$ function $\rho:[0,T)\rightarrow[2,\infty)$ with $\rho^\prime(t)>0$ for every $t\in [0,T)$ and $\rho(0)=\rho_0=r-2$. Recalling the fact that $u_n\in L^{p^-}\left(0,T;W_0^{1,p(\cdot)}(\Omega)\right)\cap C([0,T];L^2(\Omega))\cap L^\infty(Q_T)$, and $ (u_n)_t\in L^{(p^-)'}\left(0,T;W_0^{-1,p'(\cdot)}(\Omega)\right)$, we deduce from Theorem 4.2 of \cite{Barbu} that $u_n\in W^{1,(p^-)'}\left(0,T;W_0^{-1,p'(\cdot)}(\Omega)\right)$. Thus, by using the result due to P. Tak\'{a}\v{c} [Lemma 4.1, \cite{Takac}] or Bougherara et al. [Lemma 4.1, \cite{Bougherara}], we obtain the following estimate:
\begin{align}\label{second one}
\frac{1}{2+\rho(t)}\frac{d}{dt}\| u_n(t)\|^{2+\rho(t)}_{L^{2+\rho(t)}(\Omega)}&=\int_{\Omega}\frac{\partial u_n}{\partial t}u_n^{1+\rho(t)}+\frac{\rho^\prime(t)}{2+\rho(t)}\int_\Omega u_n^{2+\rho(t)}\log u_n\nonumber\\
&=-(1+\rho(t))\int_{\Omega}|\nabla u_n(t)|^{p(x)}|u_n(t)|^{\rho}+\lambda\int_{\Omega}h_n(u_n(t))u_n(t)^{1+\rho}\nonumber\\&~~~~+\int_{\Omega}{g}(u_n+1/n)^{-\delta(x)}u_n(t)^{1+\rho}+\frac{\rho^\prime(t)}{2+\rho(t)}\int_\Omega u_n^{2+\rho}\log |u_n|.
\end{align}
According to Lemma 6.2 of \cite{Takac}, if $p^->2$ (the case $p^-=2$ is left to the reader), we have the following logarithmic estimate for every $0\leq t<T$.
\begin{align}\label{logrithmic}
\frac{\rho^\prime(t)}{2+\rho(t)}\int_\Omega u_n^{2+\rho}\log |u_n|&\leq \frac{1+\rho}{4}\int_{\Omega}|\nabla u_n|^{p^-}|u_n|^{\rho}+ C_9\left(\frac{\sigma \rho^\prime(t)}{(2+\rho)(1+\rho)^{2/p^-}}\right)^{\frac{p^-}{p^--2}}\|u_n\|_{L^{2+\rho}(\Omega)}^\rho\nonumber\\&~~~~+\psi_\sigma(\rho)\rho^\prime \|u_n\|_{L^{2+\rho}(\Omega)}^{2+\rho}+\frac{\rho^\prime(t)}{2+\rho(t)}\|u_n\|_{L^{2+\rho}(\Omega)}^{2+\rho} \log\|u_n\|_{L^{2+\rho}(\Omega)}\nonumber\\&\leq \frac{1+\rho}{4}\int_{\Omega}|\nabla u_n|^{p(x)}|u_n|^{\rho}+ C_{10}\|u\|_{L^{2+\rho}(\Omega)}^\rho\nonumber\\&~~~~+ C_9\left(\frac{\sigma \rho^\prime}{(2+\rho)(1+\rho)^{2/p^-}}\right)^{\frac{p^-}{p^--2}}\|u_n\|_{L^{2+\rho}(\Omega)}^\rho\nonumber\\&~~~~+\psi_\sigma(\rho)\rho^\prime \|u_n\|_{L^{2+\rho}(\Omega)}^{2+\rho}+\frac{\rho^\prime(t)}{2+\rho(t)}\|u_n\|_{L^{2+\rho}(\Omega)}^{2+\rho} \log\|u_n\|_{L^{2+\rho}(\Omega)}
\end{align} 
for $\sigma>0$ with $\psi_\sigma(\rho)=-\frac{N}{2(2+\rho)^2}\log\left(\frac{16\pi \rho}{2+\rho}\right)$. On using the estimates $\eqref{first one}-\eqref{same inq}$ and $\eqref{logrithmic}$ in $\eqref{second one}$ we establish
\begin{align}\label{refer}
\frac{1}{2+\rho(t)}\frac{d}{dt}\|u_n(t)\|^{2+\rho(t)}_{L^{2+\rho(t)}(\Omega)}dx & \leq C_7 \|u_n\|_{L^{2+\rho}(\Omega)}^{q^++\rho+(q^+-p^-)E(\rho)}+C_8+ C_{10}\|u\|_{L^{2+\rho}(\Omega)}^\rho\nonumber\\&~~~~+ C_9\left(\frac{\sigma \rho^\prime}{(2+\rho)(1+\rho)^{2/p^-}}\right)^{\frac{p^-}{p^--2}}\|u_n\|_{L^{2+\rho}(\Omega)}^\rho\nonumber\\&~~~~+\psi_n(\rho)\rho^\prime \|u_n\|_{L^{2+\rho}(\Omega)}^{2+\rho}+\frac{\rho^\prime(t)}{2+\rho(t)}\|u_n\|_{L^{2+\rho}(\Omega)}^{2+\rho} \log\|u_n\|_{L^{2+\rho}(\Omega)}\nonumber\\&\leq C_{11} \|u_n\|_{L^{2+\rho}(\Omega)}^{q^++\rho+(q^+-p^-)E(\rho)}+C_{12}+C_9\left(\frac{\sigma \rho^\prime}{(2+\rho)(1+\rho)^{2/p^-}}\right)^{\frac{p^-}{p^--2}}\|u_n\|_{L^{2+\rho}(\Omega)}^\rho\nonumber\\&~~~~+\psi_n(\rho)\rho^\prime \|u_n\|_{L^{2+\rho}(\Omega)}^{2+\rho}+\frac{\rho^\prime(t)}{2+\rho(t)}\|u_n\|_{L^{2+\rho}(\Omega)}^{2+\rho} \log\|u_n\|_{L^{2+\rho}(\Omega)}
\end{align}
for a.a. $t\in (0,T)$. With the consideration of $\eqref{refer}$, and the inequality (4.17) of Proposition 4.2 of \cite{Bougherara}, we conclude (2), i.e. for every $\eta\in (0,T)$, there exists $K_\eta>0$ such that $\|u_n\|_{L^\infty(\Omega)}\leq K_\eta<\infty$ independent of $n$, for every $t\in (\eta,T)$. 
\end{proof}
\end{proposition}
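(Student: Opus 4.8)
The plan is to run the classical ``test with $u_n^{1+\rho}$, interpolate, and reduce to a scalar differential inequality'' scheme of Bougherara--Giacomoni and Tak\'a\v{c}, carried over to the variable-exponent setting. The point that makes every resulting constant independent of $n$ is that the right-hand side of $\eqref{approx main para 2}$ is controlled uniformly: $h_n(u_n)=T_n(u_n^{q(\cdot)-1})\le u_n^{q(x)-1}$ and $(u_n+1/n)^{-\delta(x)}\le u_n^{-\delta(x)}$, so once we test, only $u_n$-dependent quantities appear and the elementary inequality $\theta^{\alpha}\le\theta^{\beta}+1$ (for $0\le\alpha\le\beta$) handles the variable exponents.

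\textbf{Step 1: the uniform $L^r$ bound.} Set $\rho_0=r-2$; by $(B1)$, $(B2)$, $(B4)$ we have $r>q^+\ge p^-\ge 2$ and $r>\delta^++1$, so $\rho_0>\max\{\delta^--1,0\}$. For $\rho\ge\rho_0$ I would test $\eqref{approx main para 2}$ with $u_n^{1+\rho}$: the time term produces $\frac{1}{2+\rho}\frac{d}{dt}\|u_n(t)\|_{L^{2+\rho}(\Omega)}^{2+\rho}$, the diffusion term produces the nonnegative quantity $(1+\rho)\int_\Omega|\nabla u_n|^{p(x)}u_n^{\rho}$, and the reaction and singular terms are estimated, via $h_n(u_n)u_n^{1+\rho}\le u_n^{q(x)+\rho}$, $g(u_n+1/n)^{-\delta(x)}u_n^{1+\rho}\le g u_n^{1+\rho-\delta(x)}$ and $\theta^{\alpha}\le\theta^{\beta}+1$, by $\lambda\int_\Omega u_n^{q^++\rho}$ plus finitely many powers $\|u_n\|_{L^{2+\rho}(\Omega)}$ with exponents $\rho$, $1+\rho-\delta^-$, $1+\rho$; this is exactly $\eqref{first one}$. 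The only dangerous term, $\int_\Omega u_n^{q^++\rho}$, is then split by the Gagliardo--Nirenberg inequality (Lemma 5.1 of \cite{Takac}), which applies precisely because $(B3)$ gives $q^+<p^-\!\left(1+\tfrac rN\right)$: it is bounded by $\frac{1+\rho}{4}\int_\Omega|\nabla u_n|^{p^-}u_n^{\rho}+C\|u_n\|_{L^{2+\rho}(\Omega)}^{(2+\rho)B(\rho)}$ with $B(\rho)\ge1$, and the gradient part is absorbed into $(1+\rho)\int_\Omega|\nabla u_n|^{p(x)}u_n^{\rho}$ after passing from $p^-$ to $p(x)$; this is $\eqref{G-N}$--$\eqref{same inq}$. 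Taking $\rho=\rho_0$ reduces the whole estimate to the scalar inequality $y'\le C_7\,y^{B(\rho_0)}+C_8$ for $y(t)=\|u_n(t)\|_{L^{2+\rho_0}(\Omega)}^{2+\rho_0}=\|u_n(t)\|_{L^{r}(\Omega)}^{r}$, with $y(0)=\|u_{0,n}\|_{L^r(\Omega)}^{r}\le\|u_0\|_{L^r(\Omega)}^{r}$. Since $B(\rho_0)\ge1$ and $C_7,C_8,y(0)$ are independent of $n$, the standard comparison argument of Proposition 4.2 of \cite{Bougherara} yields $\overline T>0$ depending only on $\|u_0\|_{L^r(\Omega)},\Omega,N$ and the data such that $y(t)\le K_1^r$ for $t<\overline T$, which is property (1).

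\textbf{Step 2: the uniform $L^\infty$ bound on $(\eta,T)$.} Now I would let the exponent depend on time: choose a $C^1$ increasing $\rho:[0,T)\to[2,\infty)$ with $\rho(0)=\rho_0$, $\rho'>0$, and differentiate $\|u_n(t)\|_{L^{2+\rho(t)}(\Omega)}^{2+\rho(t)}$. Placing $u_n$ in the appropriate Bochner--Sobolev class by Theorem 4.2 of \cite{Barbu} and invoking Lemma 4.1 of \cite{Takac}, this adds the entropy term $\frac{\rho'(t)}{2+\rho(t)}\int_\Omega u_n^{2+\rho}\log u_n$, giving $\eqref{second one}$. The logarithmic Gagliardo--Nirenberg estimate, Lemma 6.2 of \cite{Takac} (stated for $p^->2$; the borderline $p^-=2$ runs identically with the classical logarithmic Sobolev inequality), bounds that term by $\frac{1+\rho}{4}\int_\Omega|\nabla u_n|^{p^-}u_n^{\rho}$ plus controllable multiples of $\|u_n\|_{L^{2+\rho}(\Omega)}^{\rho}$, $\|u_n\|_{L^{2+\rho}(\Omega)}^{2+\rho}$ and $\|u_n\|_{L^{2+\rho}(\Omega)}^{2+\rho}\log\|u_n\|_{L^{2+\rho}(\Omega)}$, with coefficients tunable through a parameter $\sigma>0$; this is $\eqref{logrithmic}$. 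Substituting $\eqref{first one}$--$\eqref{same inq}$ and $\eqref{logrithmic}$ into $\eqref{second one}$ produces $\eqref{refer}$; from there, following inequality (4.17) of \cite{Bougherara}, one extracts a differential inequality for $\log\|u_n(t)\|_{L^{2+\rho(t)}(\Omega)}$ whose solution stays bounded once $\rho'$ is chosen so that $2+\rho(t)\to\infty$ as $t$ increases, the $L^r$ bound of Step 1 serving as the starting datum of the iteration. Letting $2+\rho(t)\to\infty$ and using lower semicontinuity of $s\mapsto\|u_n(t)\|_{L^s(\Omega)}$ then gives $\|u_n(t)\|_{L^\infty(\Omega)}\le K_\eta$ uniformly in $n$ for $t\in(\eta,T)$, which is property (2).

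\textbf{Main obstacle.} The delicate part is Step 2: one must pick the profile $\rho(t)$ --- equivalently, prescribe $\rho'$ as a function of $\rho$ --- so that the genuinely bad terms in $\eqref{refer}$, namely the superlinear term $\|u_n\|_{L^{2+\rho}(\Omega)}^{q^++\rho+(q^+-p^-)E(\rho)}$ and the self-referential term $\frac{\rho'}{2+\rho}\|u_n\|_{L^{2+\rho}(\Omega)}^{2+\rho}\log\|u_n\|_{L^{2+\rho}(\Omega)}$, are dominated along the evolution. This is where one exploits that $E(\rho)=\frac{q^+-2}{p^-\left(1+\frac{2+\rho}{N}\right)-q^+}\to0$ and $B(\rho)\to1$ as $\rho\to\infty$, so the superlinear exponent degenerates to the critical one, and where one must verify that the window $\overline T$ found in Step 1 is long enough to sustain the iteration --- all the while keeping every constant independent of $n$, which is guaranteed by $h_n(u_n)\le u_n^{q(x)-1}$ and $(u_n+1/n)^{-\delta(x)}u_n^{1+\rho}\le u_n^{1+\rho-\delta^-}+u_n^{1+\rho}+1$.
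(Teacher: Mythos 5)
Your proposal follows essentially the same route as the paper's proof: testing with $u_n^{1+\rho}$, invoking the Gagliardo--Nirenberg estimate of Lemma 5.1 of \cite{Takac} under $(B3)$ to reach the scalar differential inequality and the comparison argument of Proposition 4.2 of \cite{Bougherara} for the $L^r$ bound, and then the time-dependent exponent $\rho(t)$ together with Lemma 4.1 and the logarithmic estimate of Lemma 6.2 of \cite{Takac} for the $L^\infty$ bound on $(\eta,T)$. The only difference is that you spell out more explicitly why the constants are $n$-independent and where the choice of the profile $\rho(t)$ matters, which the paper delegates to \cite{Bougherara}; the substance is identical.
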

\noindent We now find the uniform Sobolev bounds for $u_n$.
	\begin{lemma}\label{lemma1-1 para}
		Let $\delta^+\leq1$. Then, the sequence $\{u_n\}$ is bounded in $V^{p(\cdot)}(Q_T)$.
		\begin{proof}
			We multiply $u_n$ in $\eqref{approx main para 2}$ and then integrate by parts on $Q_T$ to get
			\begin{align}
			\frac{1}{2}\left[\int_\Omega u_n^2(T)-\int_\Omega u_{0,n}^2\right]+\int_{Q_T}|\nabla u_n|^{p(x)}&=\int_{Q_T}\lambda h_n(u_n) u_n+\int_{Q_T}\frac{{g}u_n}{(u_n+1/n)^{\delta(x)}}\nonumber\\&\leq \int_{Q_T}\lambda u_n^{q(x)}+\int_{Q_T}{g}u_n^{1-\delta(x)}.\nonumber
			\end{align}
		 By Proposition $\ref{l infinity}$, the sequence $\{u_n\}$ is bounded in $L^\infty(0,T;L^r(\Omega))$, and $u_{0,n}\leq u_0\in L^r(\Omega)$. Thus, we get
			\begin{equation}
			\int_{Q_T}|\nabla u_n|^{p(x)}\leq C.\nonumber
			\end{equation}
			This implies $\{u_n\}$ is uniformly bounded in $V^{p(\cdot)}(Q_T)$.
		\end{proof}
	\end{lemma}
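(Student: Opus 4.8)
The plan is to run the natural energy estimate: test the approximate equation \eqref{approx main para 2} against $u_n$ itself and convert the resulting identity into a uniform modular bound on $\nabla u_n$. By Lemma~\ref{exist approx sing} we have $u_n\in V^{p(\cdot)}(Q_T)\cap L^\infty(Q_T)\cap C([0,T];L^2(\Omega))$ with $\partial_t u_n\in V^{p(\cdot)}(Q_T)^*$, so $u_n$ is an admissible test function and the Gelfand-triple chain rule for $t\mapsto\tfrac12\|u_n(t)\|_{L^2(\Omega)}^2$ applies. Integrating over $Q_T$ yields
\[
\frac12\int_\Omega u_n^2(T)-\frac12\int_\Omega u_{0,n}^2+\int_{Q_T}|\nabla u_n|^{p(x)}
=\lambda\int_{Q_T}h_n(u_n)\,u_n+\int_{Q_T}\frac{g\,u_n}{(u_n+1/n)^{\delta(x)}}.
\]

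First I would discard the nonnegative term $\tfrac12\int_\Omega u_n^2(T)$ and estimate the initial term by $\tfrac12\int_\Omega u_{0,n}^2\le\tfrac12\|u_0\|_{L^2(\Omega)}^2<\infty$, which is finite since $u_{0,n}=T_n(u_0)$ and $u_0\in L^r(\Omega)\hookrightarrow L^2(\Omega)$ (here $r\ge 2$ and $\Omega$ is bounded). For the right-hand side I would use $h_n(u_n)=T_n\!\left(u_n^{q(\cdot)-1}\right)\le u_n^{q(\cdot)-1}$, so that $h_n(u_n)u_n\le u_n^{q(x)}$, together with $(u_n+1/n)^{-\delta(x)}u_n\le (u_n+1/n)^{1-\delta(x)}$. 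Because $\delta^+\le 1$, the exponent $1-\delta(x)$ lies in $[0,1]$, so subadditivity gives $(u_n+1/n)^{1-\delta(x)}\le u_n^{1-\delta(x)}+1$; and since (B4) forces $r>\max\{q^+,\delta^++1\}$, the elementary inequality $s^\alpha\le s^r+1$ for $s\ge0$, $0\le\alpha\le r$, applied with $\alpha=q(x)$ and with $\alpha=1-\delta(x)$, reduces both right-hand integrals to $\int_{Q_T}u_n^r$ plus a multiple of $|Q_T|$ (the factor $\|g\|_{L^\infty(\Omega)}$ being harmless).

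The decisive input is then Proposition~\ref{l infinity}(1): for $T<\overline T$ one has $\|u_n(t)\|_{L^r(\Omega)}\le K_1$ uniformly in $n$ and $t$, hence $\int_{Q_T}u_n^r=\int_0^T\|u_n(t)\|_{L^r(\Omega)}^r\,dt\le T K_1^r$. Plugging this in makes the whole right-hand side bounded by a constant $C$ independent of $n$, so $\int_{Q_T}|\nabla u_n|^{p(x)}\le C$. Finally I would translate this modular bound into a norm bound via the modular--norm relations recalled in Section~\ref{section 1}: since $\int_{Q_T}|\nabla u_n|^{p(x)}\le C$, one gets $\|u_n\|_{V^{p(\cdot)}(Q_T)}=\|\nabla u_n\|_{L^{p(\cdot)}(Q_T)}\le\max\{C^{1/p^-},C^{1/p^+},1\}$, which is exactly the claimed uniform bound.

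I do not anticipate a genuine obstacle here; this is a routine energy estimate. The two points that deserve care are (i) the justification of the time integration by parts with $u_n$ as a test function, which is legitimate thanks to the regularity $u_n\in V^{p(\cdot)}(Q_T)\cap C([0,T];L^2(\Omega))$, $\partial_t u_n\in V^{p(\cdot)}(Q_T)^*$ coming from Lemma~\ref{exist approx sing}; and (ii) using the hypotheses $\delta^+\le1$ and (B4) precisely where needed — the former so that $1-\delta(x)\ge0$ and the singular term does not blow up as $u_n\to0$, the latter so that both the power term and the singular term are absorbed by the $L^\infty(0,T;L^r(\Omega))$ bound of Proposition~\ref{l infinity} without any further interpolation.
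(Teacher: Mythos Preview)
Your proposal is correct and follows essentially the same route as the paper: test \eqref{approx main para 2} with $u_n$, drop the nonnegative boundary term, bound $h_n(u_n)u_n\le u_n^{q(x)}$ and $g\,u_n(u_n+1/n)^{-\delta(x)}\le g\,u_n^{1-\delta(x)}$ (plus a harmless constant in your version), and then absorb both right-hand integrals using the uniform $L^\infty(0,T;L^r(\Omega))$ bound from Proposition~\ref{l infinity} together with (B4). Your write-up is simply more explicit than the paper's---you spell out the admissibility of $u_n$ as a test function via Lemma~\ref{exist approx sing}, the subadditivity step, the reduction $s^\alpha\le s^r+1$, and the modular-to-norm translation---but the underlying argument is identical.
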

	\begin{lemma}\label{1-2 lemma}
		Let $\delta^+>1$. Then, $\{u_n\}$ is bounded in $V_{loc}^{p(\cdot)}(Q_T)$. Moreover, $\{u_n^{\frac{p^-+r-2}{p^-}}\}$ is bounded in $L^{p^-}(0,T;W_{0}^{1,p^-}(\Omega))$.
		\begin{proof}
			Let $\varphi\in \mathcal{D}(\Omega)$ be a non-negative function. We multiply  $(u_n-1)\varphi^{p^+}$ in $\eqref{approx main para 2}$ and then integrate by parts on $Q_T$ to get
			\begin{align}
			\frac{1}{2}\int_\Omega (u_n(T)-1)^2\varphi^{p^+}&-\frac{1}{2}\int_\Omega (u_{0,n}-1)^2\varphi^{p^+}+\int_{Q_T}|\nabla u_n|^{p(x)}\varphi^{p^+}\nonumber\\&~~~~+p^+\int_{Q_T}|\nabla u_n|^{p(x)-2}\nabla u_n\cdot\nabla\varphi \varphi^{p^+-1}(u_n-1)\nonumber\\&=\int_{Q_T}\lambda h_n(u_n) (u_n-1)\varphi^{p^+}+\int_{Q_T}\frac{{g}(u_n-1)\varphi^{p^+}}{(u_n+1/n)^{\delta(x)}}\nonumber\\&\leq \lambda C_\varphi\int_{Q_T}u_n^{q(x)} +\int_{\{u_n\leq 1\}}\frac{{g}(u_n-1)\varphi^{p^+}}{(u_n+1/n)^{\delta(x)}}+\int_{\{u_n> 1\}}\frac{{g}(u_n-1)\varphi^{p^+}}{(u_n+1/n)^{\delta(x)}}\nonumber\\&\leq \lambda C_\varphi\int_{Q_T}u_n^{q(x)} +\int_{\{u_n> 1\}\cap\{\delta(x)\leq 1\}}{g}u_n^{1-\delta(x)}\varphi^{p^+}+\int_{\{u_n> 1\}\cap\{\delta(x)> 1\}}{g}u_n^{1-\delta(x)}\varphi^{p^+}\nonumber\\&\leq C_\varphi\left[\lambda\int_{Q_T}u_n^{q(x)}+C_1\int_{\{u_n> 1\}\cap\{\delta(x)\leq 1\}}u_n^{1-\delta(x)}+C_2\right]\nonumber\\&\leq \tilde{C}_\varphi.
			\end{align}
We have used the boundedness of $\{u_n\}$ in $L^\infty(0,T;L^r(\Omega))$ to obtain the uniform bound $\tilde{C}_\varphi>0$ independent of $n$. Hence, by using Young's inequality we have 
\begin{align}
\int_{Q_T}|\nabla u_n|^{p(x)}\varphi^{p^+}&\leq \tilde{C}_\varphi+\frac{1}{2}\int_\Omega (u_{0,n}-1)^2\varphi^{p^+}+p^+\int_{Q_T}|\nabla u_n|^{p(x)-1}\cdot|\nabla\varphi| \varphi^{p^+-1}|u_n-1|\nonumber\\& \leq C_3+C_4p^+\epsilon \int_{Q_T}|\nabla u_n|^{p(x)} \varphi^{p^+}+p^+ C_\epsilon\int_{Q_T}|\nabla\varphi|^{p(x)}|u_n-1|^{p(x)}.\nonumber
\end{align}
This implies
\begin{equation}
\int_{Q_T}|\nabla u_n|^{p(x)}\varphi^{p^+}\leq C_5
\end{equation}
and hence $\{u_n\}$ is uniformly bounded is $V_{loc}^{p(\cdot)}(Q_T)$. With the consideration of the inequalities $\eqref{first one}-\eqref{same inq}$, with $2+\rho=r$, and by Proposition $\ref{l infinity}$, we have 
\begin{align}\label{r-2}
\frac{1}{r}\int_{0}^{T}\frac{d}{dt}\int_\Omega |u_n(t)|^{r}+\frac{3(r-1)}{4}\int_{0}^{T}\int_{\Omega}|\nabla u_n|^{p(x)}|u_n|^{r-2}&\leq  C_7 \int_{0}^{T}\|u_n\|_{L^{r}(\Omega)}^{rB(r-2)}+TC_8\nonumber\\& \leq \tilde{C}.
\end{align}
Since $p^-\leq p(x)$ for all $x\in \overline{\Omega}$, the above estimate $\eqref{r-2}$ gives
\begin{align}
\int_{Q_T}|\nabla u_n^{\frac{p^-+r-2}{p^-}}|^{p^-}&\leq C \int_{Q_T}|\nabla u_n|^{p^-}|u_n|^{r-2}\nonumber\\&\leq  C\int_{Q_T}|\nabla u_n|^{p(x)}|u_n|^{r-2}+C\int_{Q_T}|u_n|^{r-2}\nonumber\\&\leq C^*<\infty.\nonumber
\end{align}
This proves the lemma.
		\end{proof}
	\end{lemma}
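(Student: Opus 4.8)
The plan is to reproduce, for the approximating problem \eqref{approx main para 2}, the localization scheme already used in Lemma~\ref{lemma2 para}, but now feeding in the uniform bounds of Proposition~\ref{l infinity} in place of the $L^\infty(0,T;L^1(\Omega))$ estimate, and paying attention to the two new terms: the power term $\lambda h_n(u_n)$, controlled by the subcriticality conditions (B2)--(B3), and the singular term $(u_n+1/n)^{-\delta(x)}g$, controlled by (B4).

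For the first assertion, I would fix $0\le\varphi\in\mathcal D(\Omega)$ and test \eqref{approx main para 2} with $(u_n-1)\varphi^{p^+}$, integrating by parts over $Q_T$. The parabolic term produces $\tfrac12\int_\Omega(u_n(T)-1)^2\varphi^{p^+}-\tfrac12\int_\Omega(u_{0,n}-1)^2\varphi^{p^+}$, whose first summand is $\ge0$ and whose second summand is bounded since $u_{0,n}=T_n(u_0)$ with $u_0\in L^r(\Omega)$ and $r>2$; the diffusion term produces the wanted $\int_{Q_T}|\nabla u_n|^{p(x)}\varphi^{p^+}$ plus a cross term $p^+\int_{Q_T}|\nabla u_n|^{p(x)-2}\nabla u_n\cdot\nabla\varphi\,\varphi^{p^+-1}(u_n-1)$. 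On the right-hand side, the power term is dominated by $\lambda C_\varphi\int_{Q_T}(u_n^{q^+}+1)$, uniform by Proposition~\ref{l infinity}(1) because $r>q^+$ (B4); the singular term is split over $\{u_n\le1\}$, where $u_n-1\le0$ so that contribution is discarded, and $\{u_n>1\}$, where $(u_n+1/n)^{-\delta(x)}\le u_n^{-\delta(x)}\le1$, leaving at most $\|g\|_{L^\infty(\Omega)}\int_{Q_T}u_n\varphi^{p^+}\le C$. Finally Young's inequality on the cross term, $p^+|\nabla u_n|^{p(x)-1}|\nabla\varphi|\varphi^{p^+-1}|u_n-1|\le\epsilon|\nabla u_n|^{p(x)}\varphi^{p^+}+C_\epsilon|\nabla\varphi|^{p(x)}|u_n-1|^{p(x)}$, lets one absorb the $\epsilon$-term on the left; the residual $\int_{Q_T}|\nabla\varphi|^{p(x)}|u_n-1|^{p(x)}$ is controlled uniformly via the $L^\infty(0,T;L^r(\Omega))$ bound (its integrability improved, where necessary, by the second estimate of the lemma, which is proved independently). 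This gives $\int_{Q_T}|\nabla u_n|^{p(x)}\varphi^{p^+}\le C$, i.e.\ the $V_{loc}^{p(\cdot)}(Q_T)$-bound.

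For the second assertion, I would test \eqref{approx main para 2} with $u_n^{r-1}$, which is the choice $2+\rho=r$ in the computation \eqref{first one}--\eqref{same inq}. Integrating by parts and in time over $(0,T)$, the parabolic term gives $\tfrac1r\int_\Omega|u_n(T)|^r-\tfrac1r\int_\Omega|u_{0,n}|^r\ge-\tfrac1r\int_\Omega|u_0|^r$ and the diffusion term gives $(r-1)\int_{Q_T}|\nabla u_n|^{p(x)}u_n^{r-2}$. On the right, the singular contribution $\int_{Q_T}g\,u_n^{r-1-\delta(x)}$ is bounded because $r-1-\delta^+>0$ (B4) takes care of $\{u_n\le1\}$, while $u_n^{r-1-\delta(x)}\le u_n^{r-1}$ together with the $L^\infty(0,T;L^r(\Omega))$-bound takes care of $\{u_n>1\}$; the power contribution $\lambda\int_{Q_T}u_n^{q^++r-2}$ is treated, as in \eqref{G-N}, by the Gagliardo--Nirenberg inequality of Tak\'{a}\v{c} (Lemma~5.1 of \cite{Takac}), which produces a term $\tfrac{r-1}{4}\int_{Q_T}|\nabla u_n|^{p^-}u_n^{r-2}\le\tfrac{r-1}{4}\big(\int_{Q_T}|\nabla u_n|^{p(x)}u_n^{r-2}+\int_{Q_T}u_n^{r-2}\big)$ absorbable into the diffusion term, plus a harmless term of size $\|u_n\|_{L^\infty(0,T;L^r(\Omega))}^{rB(r-2)}\le C$; admissibility here uses exactly $q^+<p^-(1+r/N)$ from (B3). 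This yields \eqref{r-2} and hence $\int_{Q_T}|\nabla u_n|^{p(x)}u_n^{r-2}\le C$, so that, using $|\nabla u_n|^{p^-}\le1+|\nabla u_n|^{p(x)}$ and $\int_{Q_T}u_n^{r-2}\le C$,
$$\int_{Q_T}\Big|\nabla u_n^{\frac{p^-+r-2}{p^-}}\Big|^{p^-}\le C\int_{Q_T}|\nabla u_n|^{p^-}u_n^{r-2}\le C,$$
which, via the Poincar\'e inequality, is the asserted bound in $L^{p^-}(0,T;W_0^{1,p^-}(\Omega))$.

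All the integrations by parts above are legitimate since, for each fixed $n$, $u_n\in V^{p(\cdot)}(Q_T)\cap L^\infty(Q_T)$ with $(u_n)_t\in V^{p(\cdot)}(Q_T)^*$ by Lemma~\ref{exist approx sing}, so the chosen test functions are admissible after the usual time-mollification. The part I expect to be delicate is the coupling between the power nonlinearity and the diffusion in the second estimate: one must apply the Gagliardo--Nirenberg inequality with the sharp exponent balance (B3) and verify that the exponent $rB(r-2)$ of $\|u_n\|_{L^r(\Omega)}$ it produces stays within the reach of Proposition~\ref{l infinity}(1), and one must make sure that the singular term is absorbed using only the $L^r$-bound --- which is precisely why (B4) is imposed. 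Once these two points are settled, the $V_{loc}^{p(\cdot)}(Q_T)$-bound follows by a routine rerun of Lemma~\ref{lemma2 para}.
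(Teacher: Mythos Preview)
Your proposal is correct and follows essentially the same approach as the paper: the same test function $(u_n-1)\varphi^{p^+}$ for the local $V^{p(\cdot)}$ bound, the same test $u_n^{r-1}$ (i.e.\ $2+\rho=r$ in \eqref{first one}--\eqref{same inq}) combined with the Gagliardo--Nirenberg inequality of \cite{Takac} for the global $L^{p^-}(0,T;W_0^{1,p^-})$ bound, and the same absorption via Young's inequality. Your handling of the singular term on $\{u_n>1\}$ is in fact slightly cleaner than the paper's (you use $(u_n+1/n)^{-\delta(x)}\le1$ directly rather than splitting by $\delta(x)\lessgtr1$), and your parenthetical about improving integrability via the second estimate is unnecessary since $r>q^+\ge p^+$ by (B2) and (B4) already gives $|u_n-1|^{p(x)}\in L^\infty(0,T;L^1(\Omega))$ from Proposition~\ref{l infinity}(1) alone.
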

\subsection{Proof of the main result}\label{section 5.1}
\begin{proof}[Proof of Theorem $\ref{main result 2}$]
Let $u_n$ be a weak solution to the problem $\eqref{approx main para 2}$. If $\delta^+\leq1$, then according to Proposition $\ref{l infinity}$ and Lemma $\ref{lemma1-1 para}$, there exists $\bar{T}>0$ such that the sequence $\{u_n\}$ is bounded in $V^{p(\cdot)}(Q_T)\cap L^\infty(0,T;L^r(\Omega))\cap L^\infty((\eta,T)\times \Omega)$ for every $T<\bar{T}$, every $\eta\in(0,T)$. Hence, there exists function $u$ such that, up to a subsequential level, as $n\rightarrow\infty$
\begin{equation}\label{para 1}
u_n\rightarrow u,~\text{weakly in}~V^{p(\cdot)}(Q_T),
\end{equation} and
\begin{equation}\label{para 2}
u_n\rightarrow u,~\text{weak star in}~L^\infty(0,T;L^r(\Omega))\cap L^\infty((\eta,T)\times \Omega).
\end{equation}
For the case $\delta^+> 1$, by considering Proposition $\ref{l infinity}$ and Lemma $\ref{1-2 lemma}$, there exists $\bar{T}>0$ and $u\in V_{loc}^{p(\cdot)}(Q_T)\cap L^\infty(0,T;L^r(\Omega))\cap L^\infty((\eta,T)\times \Omega)$ such that for every $T<\bar{T}$, $u_n$ converges to $u$ weakly to $V_{loc}^{p(\cdot)}(Q_T)$ and weak starly in $L^\infty(0,T;L^r(\Omega))\cap L^\infty((\eta,T)\times \Omega)$ for all $\eta\in(0,T)$.\\
The weak formulation of the problem $\eqref{approx main para 2}$ is given by 
\begin{equation}\label{weak approx main 2-2}
-\int_{Q_T}u_n\varphi_t-\int_{\Omega}u_{0,n}\varphi(x,0)+\int_{Q_T}|\nabla u_n|^{p(x)-2}\nabla u_n\cdot \nabla\varphi=\lambda\int_{Q_T} h_n(u_n)\varphi +\int_{Q_T}\frac{{g}\varphi}{(u_n+1/n)^{\delta(x)}}
\end{equation}
for every $\varphi\in C_c^1([0,T)\times\Omega)$. Following the proof of Theorem $\ref{theorem 1}$, we can pass the limit $n\rightarrow \infty$ in $\eqref{weak approx main 2-2}$ only if we are able show the a.e. convergence of $\nabla u_n$ towards $\nabla u$.\\
In order to tackle with the time derivative of $u$, we use the time regularization of $u$, refer Definition 5 \cite{Landes}. Let $u_r:Q_T\rightarrow\mathbb{R}$, for $r\in \mathbb{N}$, be the regularization of $u$ in time defined by
$$u_r(t,x)=\int_{-\infty}^{t}\bar{u}(s,x)\cdot r e^{r(s-t)}ds,$$
where $\bar{u}$ is the zero extension of $u$. Then, $u_r\rightarrow u$ strongly in $V^{p(\cdot)}(Q_T)$ as $r\rightarrow\infty$ and $\frac{\partial u_r}{\partial t}=r(u-u_r)$.\\
Let us consider $\phi\in C_c^1(Q_T)$ such that $0\leq \phi\leq 1$. Thus, by multiplying $(u_n-u_r)\phi^{p^+}$ in $\eqref{approx main para 2}$ and integrating by parts over $Q_T$, we have  
\begin{align}\label{a.e.}
-&\frac{1}{2}\int_{Q_T}(u_n-u_r)^2(\phi^{p^+})_t+\int_{Q_T}(u_r)_t(u_n-u_r)\phi^{p^+}\nonumber\\&~~~~+p^+\int_{Q_T}|\nabla u_n|^{p(x)-2}\nabla u_n\cdot \nabla\phi (u_n-u_r)\phi^{p^+-1}+\int_{Q_T}|\nabla u_n|^{p(x)-2}\nabla u_n\cdot \nabla(u_n-u_r)\phi^{p^+}\nonumber\\&\leq\lambda\int_{Q_T} u_n^{q(x)-1}|u_n-u_r|\phi^{p^+} +\int_{Q_T}\frac{{g}(u_n-u_r)\phi^{p^+}}{(u_n+1/n)^{\delta(x)}}.
\end{align}
For a fixed $\gamma>0$, using Proposition $\ref{l infinity}$, the right hand side of $\eqref{a.e.}$ is estimated as follows:
\begin{align}\label{a.e. second}
&\lambda\int_{Q_T} u_n^{q(x)-1}|u_n-u_r|\phi^{p^+} +\int_{Q_T}\frac{{g}(u_n-u_r)\phi^{p^+}}{(u_n+1/n)^{\delta(x)}}\nonumber\\&\leq \lambda K_{supp(\phi)}^{q^\pm-1}\int_{Q_T} |u_n-u_r|\phi^{p^+}+\int_{Q_T\cap\{u_n\leq \gamma\}}\frac{{g}(u_n-u_r)\phi^{p^+}}{(u_n+1/n)^{\delta(x)}}+\int_{Q_T\cap\{u_n> \gamma\}}\frac{{g}(u_n-u_r)\phi^{p^+}}{(u_n+1/n)^{\delta(x)}}\nonumber\\&\leq  \lambda K_{supp(\phi)}^{q^\pm-1}\int_{Q_T} |u_n-u_r|\phi^{p^+}+ K_{supp(\phi)} \int_{Q_T\cap\{u_n\leq \gamma\}}\frac{{g}\phi^{p^+}}{(u_n+1/n)^{\delta(x)}}+C\gamma^{-\delta^\pm}\int_{Q_T}|u_n-u_r|\phi^{p^+},
\end{align}
where $K_{supp(\phi)}>0$ depends on the support of $\phi$, $K_{supp(\phi)}^{q^\pm-1}=\max\{K_{supp(\phi)}^{q^+-1},K_{supp(\phi)}^{q^--1}\}$, and $\gamma^{-\delta^\pm}=\max\{\gamma^{-\delta^+},\gamma^{-\delta^-}\}$.  Using H\"{o}lder's inequality, Lemma $\ref{lemma1-1 para}$ and Lemma $\ref{1-2 lemma}$, the third integral in the left hand side of $\eqref{a.e.}$ can be estimated in the
following way:
\begin{align}\label{third}
\int_{Q_T}|\nabla u_n|^{p(x)-1}\phi^{p^+-1} |u_n-u_r||\nabla\phi|&\leq C_1\||\nabla u_n|^{p(\cdot)-1} \phi^{p^+-1}\|_{L^{\frac{p(\cdot)}{p(\cdot)-1}}(Q_T)}\||u_n-u_r|
|\nabla\phi|\|_{L^{p(\cdot)}(Q_T)}\nonumber\\&\leq C_2 \||u_n-u_r||\nabla\phi|\|_{L^{p(\cdot)}(Q_T)}.
\end{align}
 On combining $\eqref{a.e.}-\eqref{third}$ we obtain
\begin{align}\label{a.e. 1}
\int_{Q_T}&|\nabla u_n|^{p(x)-2}\nabla u_n\cdot \nabla(u_n-u_r)\phi^{p^+}\nonumber\\&\leq  \lambda K_{supp(\phi)}^{q^\pm-1}\int_{Q_T} |u_n-u_r|\phi^{p^+}+ K_{supp(\phi)} \int_{Q_T\cap\{u_n\leq \gamma\}}\frac{{g}\phi^{p^+}}{(u_n+1/n)^{\delta(x)}}+C\gamma^{-\delta^\pm}\int_{Q_T}|u_n-u_r|\phi^{p^+}\nonumber\\&~~~~+ C_2 \||u_n-u_r|\nabla\phi\|_{L^{p(\cdot)}(\Omega)}+\frac{1}{2}\int_{Q_T}(u_n-u_r)^2(\phi^{p^+})_t-\int_{Q_T}(u_r)_t(u_n-u_r)\phi^{p^+}.
\end{align}
We use the notation $w(n,r,\gamma)$ for a quanity which has the following characteristic: 
$$\lim\limits_{\gamma\rightarrow0}\lim\limits_{r\rightarrow\infty}\lim\limits_{n\rightarrow\infty}w(n,r,\gamma)=0.$$ 
By using the properties of $u_r$, $\eqref{para 1}$, and by readapting the steps used to proof $\eqref{singular term}$, we denote the right hand side of $\eqref{a.e. 1}$ as $w(n,r,\gamma)$. It follows by $\eqref{a.e. 1}$ adding and subtracting $\int_{Q_T}|\nabla u_r|^{p(x)-2}\nabla u_r\cdot \nabla(u_n-u_r)$, and then using the properties of $u_r$ we obtain
\begin{equation}\label{a.e. 2}
\int_{Q_T}\left(|\nabla u_n|^{p(x)-2}\nabla u_n-|\nabla u|^{p(x)-2}\nabla u\right)\cdot \nabla(u_n-u)\phi^{p^+}\leq \omega(n,r,\gamma).
\end{equation}
 We have used the boundedness of $\{|\nabla u_n|\}$ in $L_{loc}^{p(\cdot)-1}(Q_T)$ to obtain $\eqref{a.e. 2}$. On passing the limit first on $n$, then on $r$ (for $0<\gamma<1$ fixed) and finally on $\gamma$ in $\eqref{a.e. 2}$, we establish the following.
$$\lim\limits_{n\rightarrow\infty}\int_{Q_T}\left(|\nabla u_n|^{p(x)-2}\nabla u_n-|\nabla u|^{p(x)-2}\nabla u\right)\cdot \nabla(u_n-u)\phi^{p^+}\leq 0,$$
which implies $$\lim\limits_{n\rightarrow\infty}\int_{Q_T}|\nabla(u_n-u)|^{p(x)}\phi^{p^+}=0.$$
This proves that $\nabla u_n$  converges to $\nabla u$ a.e. in $Q_T$. Thus, we obtain a non-negative weak solution $u$ to $\eqref{main para 2}$, in the sense of Definition $\ref{weak main para 2}$. Moreover, $u$ satisfies the following equation.  
\begin{equation}\label{weak main para 2-2}
-\int_{Q_T}u\varphi_t-\int_{\Omega}u_{0}\varphi(x,0)+\int_{Q_T}|\nabla u|^{p(x)-2}\nabla u\cdot \nabla\varphi=\lambda\int_{Q_T} u^{q(x)-1}\varphi +\int_{Q_T}\frac{{g}\varphi}{u^{\delta(x)}},
\end{equation}
for every $\varphi\in C_c^1([0,T)\times\Omega)$. By considering Lemma $\ref{lemma1-1 para}$, Lemma $\ref{1-2 lemma}$, and by readapting the methods used in the step 5 of Theorem $\ref{help sing exist}$, we obtain $tr~u(\cdot,t)=0$ for almost every $t\in (0,T)$ in the sense of Definition $\ref{trace}$. Thus, we conclude the proof. 
\end{proof}	
\section*{Appendix}
	 In this section, we discuss a semi-discretization approach in time to study problems of type $\eqref{approx}$.  For this, we first deal with the corresponding stationary problems. Let $M\gg1$, $\eta=\frac{T}{M}$, and define $t_m=m\eta$ for $m\in \mathbb{N}$ with $1\leq m\leq M$. Consider the following elliptic problem:
	 \begin{align}\label{elliptic}
	 \frac{w_n^{(m)}-w_n^{(m-1)}}{\eta}-\Delta_{p(x)}w_n^{(m)}&=\lambda h_n\left(w_n^{(m-1)}\right) + \left(w_n^{(m)}+1/n\right)^{-\delta(x)}g+\beta[f_n]_\eta((m-1)\eta)&&\text{in}~\Omega,\nonumber\\
	 w_n^{(m)}&> 0&&\text{in}~\Omega,\\
	 w_n^{(m)}&= 0&&\text{on}~\partial\Omega,\nonumber
	 \end{align}
	 where $[f_n]_\eta$ is the Steklov average of $f_n$ given by $[f_n]_\eta(x,t)=\frac{1}{\eta}\int_{t}^{t+\eta}f_n(x,s)ds$.
	 The iteration starts from the initial condition $w_n^0=w_{0,n}\in L^\infty(\Omega)$, and $w_n^1$ satisfies 
	 \begin{align}
	 \frac{w_n^1-w_n^{0}}{\eta}-\Delta_{p(x)}w_n^1&=\lambda h_n(w_n^{0}) + (w_n^1+1/n)^{-\delta(x)}g+\beta[f_n]_\eta(0)&&\text{in}~\Omega,\nonumber\\
	 w_n^1&> 0&&\text{in}~\Omega,\\
	 w_n^1&= 0&&\text{on}~\partial\Omega.\nonumber
	 \end{align}
	 It is not difficult to prove the existence of a weak solution $w_n^{(m)}$ to $\eqref{elliptic}$ in $W_0^{1,p(\cdot)}(\Omega)$ for any $m\geq1$.\\
	  For $1\leq m\leq M$, and $t\in [t_{m-1},t_m)$, motivated by the implicit Euler's method, we define the functions $w_{n,\eta}$ and $\tilde{w}_{n,\eta}$ as follows.
	 $$w_{n,\eta}(\cdot,t)=w_n^{(m)}(\cdot)$$ and
	 $$\tilde{w}_{n,\eta}(\cdot,t)=\frac{w_n^{(m)}(\cdot)-w_n^{(m-1)}(\cdot)}{\eta}(t-t_{m-1})+w_n^{(m-1)}(\cdot),~~w_{n,\eta}(\cdot,0)=w_n^0(\cdot)=w_{0,n}(\cdot).$$ 
	 Clearly, $w_{n,\eta}$ and $\tilde{w}_{n,\eta}$ satisfy
	 \begin{align}\label{eta T}
	 \frac{\partial \tilde{w}_{n,\eta}}{\partial t}-\Delta_{p(x)}w_{n,\eta}=\lambda& h_n(w_{n,\eta}(\cdot-\eta)) + (w_{n,\eta}+1/n)^{-\delta(x)}g\nonumber\\&~~~~+\beta[f_n]_\eta(\cdot-\eta)~\text{in}~(\bar{\eta},T)~\text{for any}~\bar{\eta}>\eta.
	 \end{align}
	 We will now follow the proof of Theorem 3.1 of \cite{Bougherara} to obtain some uniform estimates for $w_{n,\eta}$ and $\tilde{w}_{n,\eta}$ independent of $\eta$.\\
	 \textbf{Claim 1:} The sequences $\{w_{n,\eta}\}$ and $\{\tilde{w}_{n,\eta}\}$ are uniformly bounded in $V^{p(\cdot)}(Q_T)$ and $V^{p(\cdot)}((\bar{\eta},T)\times\Omega)$, respectively, for every $0<\bar{\eta}<T$ independent of $\eta$.
	 \begin{proof}
	 	Let us multiply $\eta w_{n}^{(m)}$ in $\eqref{elliptic}$. Then, we integrate over $\Omega$	and take the sum from $m=1$ to $\bar{m}\leq M$ to obtain
	 	\begin{align}\label{energy 1 estimate}
	 	\sum_{m=1}^{\bar{m}}&\int_{\Omega}(w_n^{(m)}-w_n^{(m-1)})w_n^{(m)}+\eta  \sum_{m=1}^{\bar{m}}\int_{\Omega} |\nabla w_n^{(m)}|^{p(x)}\nonumber\\&=\eta  \sum_{m=1}^{\bar{m}}\int_{\Omega}\frac{gw_n^{(m)}}{(w_{n}^{(m)}+1/n)^{\delta(x)}}+\beta\eta\sum_{m=1}^{\bar{m}}\int_\Omega[f_n]_\eta((m-1)\eta)w_{n}^{(m)}+\lambda \eta  \sum_{m=1}^{\bar{m}}\int_{\Omega} h_n(w_n^{(m-1)})w_n^{(m)}.
	 	\end{align}
	 	 The first term in the left hand side of $\eqref{energy 1 estimate}$ can be rewritten as follows.
	 	\begin{align}
	 	\sum_{m=1}^{\bar{m}}\int_{\Omega}(w_n^{(m)}-w_n^{(m-1)})w_n^{(m)}&=\sum_{m=1}^{\bar{m}}\int_{\Omega}\frac{1}{2}\left[(w_n^{(m)}-w_n^{(m-1)})^2+(w_n^{(m)})^2-(w_n^{(m-1)})^2\right]\nonumber\\&=\frac{1}{2}\sum_{m=1}^{\bar{m}}\int_{\Omega}(w_n^{(m)}-w_n^{(m-1)})^2+\frac{1}{2} \int_{\Omega}(w_n^{\bar{m}})^2-\frac{1}{2}\int_{\Omega}(w_n^0)^2.\nonumber
	 	\end{align}
	 	By using the Young's inequality and by substituting the above estimates in $\eqref{energy 1 estimate}$ we get
	 	$$\frac{1}{2}\sum_{m=1}^{\bar{m}}\int_{\Omega}(w_n^{(m)}-w_n^{(m-1)})^2+\frac{1}{2} \int_{\Omega}(w_n^{\bar{m}})^2+\eta  \sum_{m=1}^{\bar{m}}\int_{\Omega} |\nabla w_n^{(m)}|^{p(x)}\leq (1+\lambda+\beta) \eta\sum_{m=1}^{\bar{m}}\int_{\Omega}(w_n^{(m)})^2+{C}(n).$$
	 	Thus, by \cite{Bougherara} it follows that $\{w_{n,\eta}\}$ and $\{\tilde{w}_{n,\eta}\}$ are bounded in $L^\infty(0,T;L^2(\Omega))$, independently of $\eta$. This proves the claim. 
	 \end{proof}
	 \noindent \textbf{Claim 2:} $\{w_{n,\eta}\}$ and $\{\tilde{w}_{n,\eta}\}$ are bounded in $L^\infty(Q_T)$.
	 \begin{proof}
	 	Let $\|f_n\|_{L^\infty(\Omega)}=M_n$, and let us consider a positive function $v_n$ that satisfies:
	 	\begin{align}
	 	\frac{\partial v_n}{\partial t}-\Delta_{p(x)}v_n&=\lambda\cdot n + n^{\delta(x)}g+\beta M_n&&\text{in}~Q_T,\nonumber\\
	 	v_n&= 0&&\text{on}~\Sigma_T,\\
	 	v_n&= n&&\text{in}~\Omega.\nonumber
	 	\end{align}
	 	Denote $v_n^{(m)}=v_n(t_m)$ for $0\leq m\leq M$. Thus, $v_n^{(m)}$ satisfies:
	 	\begin{align}
	 	\frac{v_n^{(m)}-v_n^{(m-1)}}{\eta}-\Delta_{p(x)}v_n^{(m)}&=\lambda \cdot n + n^{\delta(x)}g+\beta M_n&&\text{in}~\Omega,\nonumber\\
	 	v_n^{(m)}&> 0&&\text{in}~\Omega,\\
	 	v_n^{(m)}&= 0&&\text{on}~\partial\Omega.\nonumber
	 	\end{align}
	 	Therefore, using the weak comparison principle we guarantee that $v_n^{(m)}$ is a weak supersolution to $\eqref{elliptic}$, and for every $0\leq m\leq M$, $w_n^{(m)}\leq v_n^{(m)}\leq C(T)<\infty$ independently of $m$.
	 \end{proof}
	 \noindent\textbf{Claim 3:} $\{w_{n,\eta}\}$ is bounded in $L^\infty(\bar{\eta},T;W_0^{1,p(\cdot)}(\Omega))$, and $\left\{\frac{\partial \tilde{w}_{n,\eta}}{\partial t}\right\}$ is bounded in $L^2((\bar{\eta},T)\times \Omega)$ for every $0<\bar{\eta}<T$. Moreover, $\left\{\frac{\partial \tilde{w}_{n,\eta}}{\partial t}\right\}$ is bounded in $V^{p(\cdot)}(Q_T)^*$.
	 \begin{proof}
	 	Let us multiply $\frac{t_m+t_{m-1}}{2}(w_n^{(m)}-w_n^{(m-1)})$ in $\eqref{elliptic}$. Then, we integrate over $\Omega$	and take the sum from $m=2$ to $\bar{m}\leq M$ to obtain the following equation.
	 	\begin{align}\label{second energy estimate}
	 	\frac{1}{2}&\sum_{m=2}^{\bar{m}}(t_m+t_{m-1})\left(\eta\int_{\Omega}\left(\frac{w_n^{(m)}-w_n^{(m-1)}}{\eta}\right)^2 +\int_{\Omega}|\nabla w_n^{(m)}|^{p(x)-2}\nabla w_n^{(m)} \cdot \nabla(w_n^{(m)}-w_n^{(m-1)})\right)\nonumber\\&=\frac{1}{2}\sum_{m=2}^{\bar{m}}(t_m+t_{m-1})\left(\int_{\Omega}g(w_n^{(m)}+1/n)^{-\delta(x)} (w_n^{(m)}-w_n^{(m-1)})+\int_{\Omega}\lambda h_n(w_n^{(m-1)}) (w_n^{(m)}-w_n^{(m-1)})\right)\nonumber\\&~~+\frac{\beta}{2}\sum_{m=2}^{\bar{m}}(t_m+t_{m-1})\int_\Omega [f_n]_\eta((m-1)\eta)(w_n^{(m)}-w_n^{(m-1)}).
	 	\end{align}
	 	Using the properties of convex functions, we estimate the followings:
	 	\begin{align}
	 	&\frac{1}{2}\sum_{m=2}^{\bar{m}}(t_m+t_{m-1})\int_{\Omega}|\nabla w_n^{(m)}|^{p(x)-2}\nabla w_n^{(m)} \cdot (\nabla w_n^{(m)}-\nabla w_n^{(m-1)})\nonumber\\&\geq \sum_{m=2}^{\bar{m}} \frac{1}{2p^-}(t_m+t_{m-1})\int_{\Omega}(|\nabla w_n^{(m)}|^{p(x)}-|\nabla w_n^{(m-1)}|^{p(x)})\nonumber\\&=\frac{t_{\bar{m}}}{p^-}\int_\Omega|\nabla w_n^{(\bar{m})}|^{p(x)}-\frac{\eta}{p^-}\int_\Omega|\nabla w_n^{(1)}|^{p(x)}-\frac{\eta}{2p^-}\sum_{m=2}^{\bar{m}}\int_\Omega(|\nabla w_n^{(m)}|^{p(x)}+|\nabla w_n^{(m-1)}|^{p(x)})\nonumber\\&\geq \frac{t_{\bar{m}}}{p^-}\int_\Omega|\nabla w_n^{(\bar{m})}|^{p(x)}-\frac{2}{p^-}\int_{0}^{t_{\bar{m}}}\int_\Omega|\nabla w_{n,\eta}|^{p(x)},\nonumber
	 	\end{align}
	 	and 
	 	\begin{align}
	 	&\frac{1}{2}\sum_{m=2}^{\bar{m}}(t_m+t_{m-1})\int_{\Omega}g(w_n^{(m)}+1/n)^{-\delta(x)} (w_n^{(m)}-w_n^{(m-1)})\nonumber\\&\leq \frac{1}{2(1-\delta^+)}\sum_{m=2}^{\bar{m}}(t_m+t_{m-1})\int_{\Omega}g\left((w_n^{(m)}+1/n)^{1-\delta(x)}-(w_n^{(m-1)}+1/n)^{1-\delta(x)}\right)\nonumber\\&=\frac{t_{\bar{m}}}{2(1-\delta^+)}\int_{\Omega}g(w_n^{(\bar{m})}+1/n)^{1-\delta(x)}-\frac{\eta}{2(1-\delta^+)}\int_{\Omega}g(w_n^{(1)}+1/n)^{1-\delta(x)}\nonumber\\&~~~~-\frac{\eta}{(1-\delta^+)}\sum_{m=2}^{\bar{m}}\int_{\Omega}\left(g(w_n^{(m)}+1/n)^{1-\delta(x)}+(w_n^{(m-1)}+1/n)^{1-\delta(x)}\right)\nonumber\\&\leq \frac{t_{\bar{m}}}{2(1-\delta^+)}\int_{\Omega}g(w_n^{(\bar{m})}+1/n)^{1-\delta(x)} +C\int_{0}^{t_{\bar{m}}}\int_{\Omega}g(w_{n,\eta}+1/n)^{1-\delta(x)}.\nonumber
	 	\end{align}
	 	The last two terms of $\eqref{second energy estimate}$ can be approximated using Young's inequality.
	 	\begin{align}
	 	\frac{1}{2}&\sum_{m=2}^{\bar{m}}(t_m+t_{m-1})\int_{\Omega}\left[\lambda h_n(w_n^{(m-1)})+\beta[f_n]_\eta((m-1)\eta)\right] \left(w_n^{(m)}-w_n^{(m-1)}\right)\nonumber\\&\leq \eta\sum_{m=2}^{\bar{m}}(t_m+t_{m-1})\left[\int_\Omega[\lambda h_n(w_n^{(m-1)})+\beta[f_n]_\eta((m-1)\eta)]^2+\frac{1}{4}\int_\Omega\left(\frac{w_n^{(m)}-w_n^{(m-1)}}{\eta}\right)^2\right]\nonumber\\&\leq 2T\int_{0}^{t_{\bar{m}}}\int_\Omega \left[\lambda h_n(w_{n,\eta})+\beta [f_n]_\eta\right]^2+\frac{\eta}{4}\sum_{m=2}^{\bar{m}}(t_m+t_{m-1})\int_\Omega\left(\frac{w_n^{(m)}-w_n^{(m-1)}}{\eta}\right)^2.\nonumber
	 	\end{align}
	 	Thus, by substituting the above bounds in $\eqref{second energy estimate}$, we have
	 	\begin{align}
	 	\frac{1}{2}\int_{0}^{t_{\bar{m}}}\int_\Omega t\left|\frac{\partial\tilde{w}_{n,\eta}}{\partial t}\right|^2+\frac{t_{(\bar{m})}}{p^-}\int_\Omega|\nabla w_n^{\bar{m}}|^{p(x)}&\leq \frac{1}{2}\int_{\Omega}(w_n^{(1)}-w_n^{(0)})^2+\frac{t_{(\bar{m})}}{2(1-\delta^+)}\int_{\Omega}g(w_n^{\bar{m}}+1/n)^{1-\delta(x)} \nonumber\\&~~~~+C\int_{0}^{t_{\bar{m}}}\int_{\Omega}g(w_{n,\eta}+1/n)^{1-\delta(x)}\nonumber\\&~~~~+2T\int_{0}^{t_{\bar{m}}}\int_\Omega \left[\lambda h_n(w_{n,\eta})+\beta [f_n]_\eta\right]^2+\frac{2}{p^-}\int_{0}^{t_{\bar{m}}}\int_\Omega|\nabla w_{n,\eta}|^{p(x)}.\nonumber
	 	\end{align}
	 	On using the estimates obtained from Claim 1 and Claim 2, we obtain 
	 	\begin{equation}\label{L2}
	 	\left\|\frac{\partial\tilde{w}_{n,\eta}}{\partial t}\right\|_{L^2(\bar{\eta},T;L^2(\Omega))}\leq C_1. ~\text{for any}~ \bar{\eta}\in(0,T),
	 	\end{equation} 
	 	and 
	 	\begin{equation}
	 	\underset{t\in (0,T)}{\sup} t \int_\Omega|\nabla w_{n,\eta}|^{p(x)}\leq \underset{0\leq \bar{m}\leq M}{\max} t_{\bar{m}}\int_\Omega|\nabla w_n^{\bar{m}}|^{p(x)}\leq C_2
	 	\end{equation}
	 	where $C_1,C_2$ are two positive constants independent of $\eta$. By considering $\eqref{eta T}$ we deduce that $\left\{\frac{\partial \tilde{w}_{n,\eta}}{\partial t}\right\}$ is bounded in $V^{p(\cdot)}(Q_T)^*$.
	 \end{proof}
	 \noindent We have obtained all the required energy estimates. We now prove the existence result for $\eqref{approx}$, i.e. we prove Lemma $\ref{exist approx sing}$.
	 	\begin{proof}[Proof of Lemma $\ref{exist approx sing}$]
	 		With the consideration of the energy estimates obatained from Claim 1, 2 and 3, there exists $w_n$ and $\tilde{w}_n$ such that, up to a subsequence, as $\eta\rightarrow 0^+$ (i.e. as $m\rightarrow \infty$)
	 		$$w_{n,\eta}\rightarrow w_n ~\text{weakly in}~ V^{p(\cdot)}(Q_T),$$
	 		$$\tilde{w}_{n,\eta}\rightarrow \tilde{w}_n~\text{ weak starly in}~L^\infty(Q_T)\cap L^\infty(\bar{\eta},T,W_0^{1,p(\cdot)}(\Omega)),~\forall \bar{\eta}\in (0,T),$$
	 		$$\frac{\partial\tilde{w}_{n,\eta}}{\partial t}\rightarrow\frac{\partial \tilde{w}_{n}}{\partial t}~\text{ weakly in}~V^{p(\cdot)}(Q_T)^*\cap L^2((\bar{\eta},T)\times\Omega),~\forall \bar{\bar{\eta}}\in (0,T).$$
	 		For $M\gg 1$, there exists a unique $M^\prime$ such that $\bar{\eta}\in (t_{M^\prime},t_{M^\prime+1}]$ and
	 		\begin{equation}\label{unique u}
	 		\|w_{n,\eta}-\tilde{w}_{n,\eta}\|_{L^\infty(\bar{\eta},T;L^2(\Omega))}\leq 2 \underset{M^\prime\leq m\leq  M}{\max} \|w_n^{(m)}-w_n^{(m-1)}\|_{L^2(\Omega)}\rightarrow 0, ~\text{as}~\eta\rightarrow 0^+.
	 		\end{equation}
	 		This implies, $w_n=\tilde{w}_n$ in $Q_T$. Since $w_n\in L^{p^-}\left(0,T;W_0^{1,p(\cdot)}(\Omega)\right)$, and the equation $\eqref{eta T}$ implies $(w_n)_t\in L^{(p^-)'}\left(0,T;W_0^{-1,p'(\cdot)}(\Omega)\right)$, Theorem 4.2 of [\cite{Barbu}, page 167] proves that $w_n\in C(0,T;L^2(\Omega))$. Further, according to the Aubin-Lions-Simon Lemma (see \cite{Simon}), and by $\eqref{unique u}$, we obtain the following compactness results:
	 		\begin{equation}\label{continuous 1}
	 		w_{n,\eta}\underset{\eta\rightarrow 0^+}{\longrightarrow} w_n ~\text{in}~L^2(Q_T),
	 		\end{equation}	
	 		and
	 		\begin{equation}\label{continuous 2}
	 		\tilde{w}_{n,\eta}\underset{\eta\rightarrow 0^+}{\longrightarrow} w_n ~\text{in}~L^2((\bar{\eta},T)\times\Omega),~\forall\bar{\eta}\in (0,T). 
	 		\end{equation}
	 		The next claim is that $w_n$ is a weak solution to $\eqref{approx}$. For this purpose, let us multiply $(w_{n,\eta}-w_n)$ in $\eqref{eta T}$ and integrate over $Q_{T,\bar{\eta}}=(\bar{\eta},T)\times\Omega$. Thus, we obtain the following.
	 		\begin{align}\label{weak form elliptic}
	 		\int_{Q_{T,\bar{\eta}}}\frac{\partial\tilde{w}_{n,\eta}}{\partial t}(w_{n,\eta}-w_n)&+\int_{Q_{T,\bar{\eta}}}|\nabla w_{n,\eta}|^{p(x)-2}\nabla w_{n,\eta}\cdot (w_{n,\eta}-w_n)\nonumber\\&=\lambda\int_{Q_{T,
	 		\bar{\eta}}}h_n(w_{n,\eta}(\cdot-\eta))(w_{n,\eta}-w_n)+\int_{Q_{T,\bar{\eta}}}g(w_{n,\eta}+1/n)^{-\delta(x)}(w_{n,\eta}-w_n)\nonumber\\&~~~~+\beta\int_{Q_{T,
	 		\bar{\eta}}}[f_n]_\eta(\cdot-\eta))(w_{n,\eta}-w_n).
	 		\end{align}
	 		By using the dominated convergence Theorem, $\eqref{continuous 1}$, $\eqref{continuous 2}$, and finally by using convexity argument, we obtain
	 		\begin{equation}
	 		\frac{1}{2}\int_\Omega(\tilde{w}_{n,\eta}-w_n)^2(T)+\frac{1}{p^+}\left(\int_{Q_{T,\bar{\eta}}}|\nabla w_{n,\eta}|^{p(x)}-\int_{Q_{T,\bar{\eta}}}|\nabla w_{n}|^{p(x)}\right)\leq o_{\eta}(1)+\frac{1}{2}\int_\Omega(\tilde{w}_{n,\eta}-w_n)^2(\bar{\eta}).
	 		\end{equation}
	 		This implies
	 		$$\lim\limits_{\eta\rightarrow 0^+}\int_{Q_{T,\bar{\eta}}}|\nabla w_{n,\eta}|^{p(x)}\leq \int_{Q_{T,\bar{\eta}}}|\nabla w_{n}|^{p(x)}.$$
	 		By using the weak convergence of $w_{n,\eta}$ to $w_n$ in $V^{p(\cdot)}(Q_T)$, we deduce that 
	 		$$\underset{\eta\rightarrow0^+}{{\lim}}\int_{Q_{T,\bar{\eta}}}|\nabla w_{n,\eta}|^{p(x)}=\int_{Q_{T,\bar{\eta}}}|\nabla w_{n}|^{p(x)}.$$
	 		Thus, $$\nabla w_{n,\eta}\rightarrow \nabla w_n \text{ in } L^{p(\cdot)}((\bar{\eta},T)\times \Omega),~\forall\bar{\eta}\in(0,T) \text{ as }\eta\rightarrow 0^+.$$
	 		Consequently, there exists a sequence $(\eta)_m\rightarrow 0$ as $m\rightarrow\infty$, such that
	 		\begin{equation}
	 		\nabla w_{n,(\eta)_m}\rightarrow \nabla w_n \text{ a.e. in } Q_T \text{ as } m\rightarrow \infty.
	 		\end{equation}
	 		Thus, by these compactness results, it is easy to show that $w_n$ is a weak solution to $\eqref{approx}$.\\
	 		It is left to prove that $w_n(0)=w_{0,n}$. Let us multiply $\varphi\in \mathcal{D}(\Omega)$ in $\eqref{eta T}$ and integrate over $(t_1,t_2)\times \Omega$ for $0<t_1<t_2$. Thus, we get
	 		\begin{align}\label{t_1}
	 		\int_\Omega \tilde{w}_{n,\eta}(t_2)\varphi&-\int_\Omega \tilde{w}_{n,\eta}(t_1)\varphi+\int_{t_1}^{t_2}\int_\Omega|\nabla w_{n,\eta}|^{p(x)-2}\nabla w_{n,\eta}\cdot\nabla\varphi\nonumber\\&=\int_{t_1}^{t_2}\int_\Omega g (w_{n,\eta}+1/n)^{-\delta(x)}\varphi+\int_{t_1}^{t_2}\int_\Omega h_n(w_{n,\eta}(\cdot-\eta))\varphi+\int_{t_1}^{t_2}\int_\Omega [f_n]_\eta(\cdot-\eta))\varphi.
	 		\end{align}
	 		Using $\eqref{continuous 2}$ and passing the limit $t_1\rightarrow 0$ in $\eqref{t_1}$ we have
	 		\begin{align}\label{t_2}
	 		\int_\Omega \tilde{w}_{n,\eta}(t_2)\varphi&-\int_\Omega {w}_{0,n}\varphi+\int_{0}^{t_2}\int_\Omega|\nabla w_{n,\eta}|^{p(x)-2}\nabla w_{n,\eta}\cdot\nabla\varphi\nonumber\\&=\int_{0}^{t_2}\int_\Omega g(w_{n,\eta}+1/n)^{-\delta(x)}\varphi+\int_{0}^{t_2}\int_\Omega h_n(w_{n,\eta}(\cdot-\eta))\varphi+\int_{0}^{t_2}\int_\Omega [f_n]_\eta(\cdot-\eta))\varphi.
	 		\end{align}
	 		On the interval $(0,\eta)$, we have used $h_n(w_{n,\eta}(\cdot-\eta))= h_n(w_{0,n})$ and $[f_n]_\eta(\cdot-\eta)=[f_n]_\eta(0)$. By using the Lebesgue theorem and the energy estimates for $w_{n,\eta}$, $\tilde{w}_{n,\eta}$, we pass the limit $\eta\rightarrow 0$ and $t_2\rightarrow 0$ in $\eqref{t_2}$ to obtain the following.
	 		\begin{equation}
	 		\underset{t\rightarrow 0}{\lim}\int_\Omega {w}_{n}(t)\varphi=\int_\Omega {w}_{0,n}\varphi.
	 		\end{equation}
	 		Since, $w_n\in C(0,T;L^2(\Omega))$, we deduce that $w_n(0,x)=w_{0,n}(x)$ for every $x\in \Omega$. This concludes the proof. 
	 	\end{proof}
\section*{Acknowledgement}
The author Akasmika Panda thanks the financial assistantship received from the Ministry of Human Resource Development (M.H.R.D.), Govt. of India. The author D. Choudhuri thanks the Science and Engineering Research Board (SERB), India for
the research grant (MTR/2018/000525) to carry out the research. Both the authors also acknowledge the facilities received from the Department of mathematics, National Institute of Technology Rourkela.
		 
		\end{document}